\newtheorem{theorem}{Theorem}[section]
\newtheorem{lemma}[theorem]{Lemma}
\newtheorem{corollary}[theorem]{Corollary}
\newtheorem{proposition}[theorem]{Proposition}
\theoremstyle{remark}
\newtheorem{remark}[theorem]{Remark}
\renewenvironment{proof}[1][Proof]{ {\itshape \noindent {#1.}} }{$\Box$
\medskip}
\numberwithin{equation}{section}
\newcommand{\R}{\mathbb{R}}
\newcommand{\Z}{\mathbb{Z}}
\newcommand{\Pb}{\mathbb{P}}
\newcommand{\PP}{\mathbf{P}}
\newcommand{\E}{\mathbb{E}}
\newcommand{\bbQ}{\mathbb{Q}}
\newcommand{\F}{\mathcal{F}}
\newcommand{\G}{\mathcal{G}}
\newcommand{\W}{\mathscr{W}}
\newcommand{\D}{\mathcal{D}}
\newcommand{\V}{\mathcal{V}}
\newcommand{\eps}{\varepsilon}
\newcommand{\cE}{\mathcal{E}}
\newcommand{\EE}{\mathbf{E}}
\renewcommand{\d}{\mathrm{d}}
\def\blue{\textcolor{black}}
\newcommand{\x}{\mathbf{x}}
\newcommand{\bT}{\mathbb{T}}
\newcommand{\cZ}{\mathcal{Z}}
\newcommand{\cX}{\mathcal{X}}
\newcommand{\cY}{\mathcal{Y}}
\newcommand{\cW}{\mathcal{W}}
\newcommand{\rhof}{\rho_{\mathrm{f}}}
\newcommand{\rhob}{\rho_{\mathrm{b}}}
\newcommand{\lf}{\lfloor}
\newcommand{\rf}{\rfloor}
\newcommand{\rhoa}{\rho_{\mathrm{app},\mathrm{m}}}
\newcommand{\tildegapp}{\tilde{g}_{\mathrm{app}}}
\newcommand{\tildeJapp}{\tilde{J}_{W,\mathrm{app}}}
\newcommand{\cal}{\mathcal}
\begin{document}
\title{Effective diffusivities in periodic KPZ}

\author{Yu Gu, Tomasz Komorowski}

\address[Yu Gu]{Department of Mathematics, University of Maryland, College Park, MD 20742, USA. }
\email{yugull05@gmail.com}

\address[Tomasz Komorowski]{Institute of Mathematics, Polish Academy of Sciences, ul.
Śniadeckich 8, 00-636 Warsaw, Poland. }
\email{tkomorowski@impan.pl}

\maketitle

\begin{abstract}
For the KPZ equation on a torus with a $1+1$ spacetime white noise, it was shown in \cite{GK21,ADYGTK22} that the height function satisfies a central limit theorem, and the variance can be written as the expectation of an exponential functional of Brownian bridges. In this paper, we consider another physically relevant quantity, the winding number of the  directed polymer on a cylinder, or equivalently, the displacement of the directed polymer endpoint    in a spatially periodic random environment. It was shown in \cite{YGTK22} that the polymer endpoint   satisfies a central limit theorem on diffusive scales. The main result of this paper is  an explicit expression of the effective diffusivity, in terms of the expectation of another exponential functional of Brownian bridges. Our argument is based on a combination of tools from Malliavin calculus, homogenization, and diffusion in distribution-valued random environments.


\medskip

\noindent \textsc{Keywords:} KPZ equation, directed polymer, diffusion in random environment, homogenization.

\end{abstract}


\section{Introduction}

\subsection{Main result}
We are interested in the stochastic heat equation (SHE) with a spatially periodic space-time white noise 
\begin{equation}\label{e.she}
\begin{aligned}
&\partial_t Z=\tfrac12\Delta Z+\beta Z\xi, \quad\quad t>0, x\in\R,\\
&Z(0,x)=\delta(x).
\end{aligned}
\end{equation}
Here $\xi$ is a generalized Gaussian random field over $\R\times\R$ with the covariance function 
\[
\EE[\xi(t,x)\xi(s,y)]=\delta(t-s)\sum_{n\in\Z}\delta(x-y+n).
\]
We assume that $\xi$ is built on a probability space $(\Omega,\F,\PP)$
and view it as a spacetime white noise on $\R\times \bT$ that is
periodically extended  to $\R\times \R$. Here $\bT$ is the 
unit torus, defined as the interval $[0,1]$ with identified endpoints.
The product between $Z$ and $\xi$ is interpreted in the
It\^o-Walsh sense. 

The random function $Z$ is associated with the model of a directed
polymer in a random environment and the parameter $\beta>0$ plays the
role of the inverse temperature. There are two important physical
quantities: the free energy and the endpoint displacement of the polymer. Define
$\bar{Z}_t=\int_\R Z(t,x)dx$ as the point-to-line partition function. It was known that $\log \bar{Z}_t$ satisfies a central limit theorem, with the drift and the variance described explicitly in terms of some auxiliary Brownian bridges, see \cite[Theorem 1.1]{GK21}, \cite[Proposition 4.1]{GK211} and \cite[Eq. (2.10), (4.2)]{ADYGTK22}:
\[
\frac{\log \bar{Z}_t+\gamma(\beta)t}{\sqrt{t}}\Rightarrow
N(0,\Sigma^2(\beta)),\quad \mbox{as }t\to\infty,
\]
where $\gamma(\beta),\Sigma^2(\beta)>0$ are constants given by 
\begin{equation}\label{e.gammaSigma}
\begin{aligned}
&\gamma(\beta)=\frac{\beta^2}{2}\E_{W_1}\left[\frac{1}{\left(\int_{0}^1e^{\beta
    W_1(y)}dy\right)^{2}}\right]=\frac{\beta^2}{2}+\frac{\beta^4}{24},\\
&\Sigma^2(\beta)= \beta^{2}\E_{W_1}\left[\left(\E_{W_2}\frac{1}{\int_0^1 e^{\beta(W_1(y)+W_2(y))}dy}\right)^2\right].
\end{aligned}
\end{equation}
Here  $W_1,W_2$ are independent Brownian bridges \blue{connecting} $(0,0)$ and $(1,0)$, and throughout the paper, we use $\E_{W_i}$ as the expectation on $W_i$.

Define 
\begin{equation}\label{e.defrhoIntro}
\rho(t,x)=\frac{Z(t,x)}{\bar{Z}_t},\quad x\in\R,
\end{equation} which is the quenched endpoint density of the
associated directed polymer. It was shown in \cite{YGTK22} that there
exists some $\sigma^2(\beta)>0$ such that for any $f$ belonging to
$C_b(\R)$ - the space of bounded continuous functions on $\R$ - we have 
\begin{equation}\label{e.diffusivityendpoint}
\EE \int_{\R} f(\frac{x}{\sqrt{t}})\rho(t,x)dx\to \int_{\R} f(x) \frac{1}{\sqrt{2\pi \sigma^2(\beta)}}\exp(-\frac{x^2}{2\sigma^2(\beta)})dx, 
\end{equation}
as $t\to\infty$. In other words, the annealed endpoint distribution converges under the diffusive scaling to a centered Gaussian with variance $\sigma^2(\beta)$.

The main result of the paper is to derive an explicit expression of
$\sigma^2(\beta)$:



\begin{theorem}\label{t.mainth}
Suppose that $\{W_i\}_{i=1,2,3}$ are  three independent Brownian
bridges connecting $(0,0)$ and $(1,0)$. Then, the effective diffusivity $\sigma^2(\beta)$ admits the following expression: 
\begin{equation}\label{e.exsigma}
\sigma^2(\beta)=1+\beta^2\E_{W_1}\E_{W_3}\big[\mathcal{A}(\beta,W_1,W_3)^2\big],
\end{equation}
where
\[
 \mathcal{A}(\beta,W_1,W_3)
    =\int_{\bT^2} \Xi(\beta,y,W_1)
    \left(\frac{e^{\beta W_1(z)+\beta W_3(z)}}{\int_{\bT}
    e^{\beta W_1(z')+\beta
      W_3(z')}dz'}-1\right)1_{[0,y]}(z)dydz,
      \]
      with
      \[
\Xi(\beta,y,W_1)=\E_{W_2}\left[\frac{ e^{\beta W_2(y) -\beta
    W_1(y)}}{\Big(\int_{\bT}e^{\beta W_2(y')}e^{-\beta W_1(y')}dy'\Big)^2}\right].
\]
\end{theorem}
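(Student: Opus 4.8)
The plan is to compute $\sigma^2(\beta)$ via the standard homogenization/corrector strategy applied to the quenched endpoint density $\rho(t,x)$, which (after rescaling the torus variable) evolves as a diffusion in a distribution-valued, rapidly-oscillating random environment. The starting point is the observation, available from \cite{YGTK22}, that $\rho(t,x) = \sum_{n\in\Z}\rho(t,x;n)$ decomposes over the windings $n$, and that the ``angular'' part $\bar\rho(t,\theta) = \sum_n \rho(t,\theta+n)$, $\theta\in\bT$, is a stationary (in $t$) Markov process on the space of densities on $\bT$ once one passes to the stationary measure built from the Brownian-bridge representation. Concretely, I would write the displacement as an additive functional: the endpoint position is the integral of an instantaneous ``drift'' plus a martingale, where the drift at time $s$ is a functional $b(\rho(s,\cdot))$ of the current angular configuration, namely the expectation of $\partial_\theta$ of the log-density, or equivalently a bilinear expression in the environment. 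The effective diffusivity is then $\sigma^2(\beta) = \sigma^2_{\mathrm{mart}} + \sigma^2_{\mathrm{corr}}$, where the first term is the quadratic variation of the martingale part (this produces the $1$ in \eqref{e.exsigma}) and the second is the Green--Kubo-type contribution $2\int_0^\infty \EE[b(\rho(0))\,b(\rho(s))]\,ds$ coming from the time-correlations of the drift, which must be resummed into a single static expectation.

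The key steps, in order: \textbf{(i)} Make rigorous the Markov/stationarity structure of the angular process $\bar\rho(t,\cdot)$ and identify its stationary distribution in terms of two independent Brownian bridges $W_1,W_2$ — this is where the functional $\Xi(\beta,y,W_1)$ arises, as the stationary density seen from the endpoint involves the ratio $e^{\beta W_2(y)-\beta W_1(y)}/(\int e^{\beta W_2-\beta W_1})^2$ after the standard Cameron--Martin / time-reversal manipulations that already appeared in the derivation of $\gamma(\beta)$ and $\Sigma^2(\beta)$. \textbf{(ii)} Use Malliavin calculus (Clark--Ocone) to express the martingale part of the displacement and compute its bracket, checking that after taking expectations the normalization collapses to exactly $1$; morally this says that in the absence of environmental correlations the endpoint performs a standard Brownian motion, and the $\beta$-dependence is entirely in the correction. \textbf{(iii)} Solve the corrector (Poisson) equation $-\mathcal{L}\chi = b$ for the generator $\mathcal{L}$ of the angular process, or equivalently evaluate the time-integral $\int_0^\infty \EE[b(\rho(0))P_s b(\rho(0))]\,ds$ directly using the semigroup's explicit action on the Brownian-bridge representation; this is what introduces the third bridge $W_3$ (the environment ``refreshes'' along an independent copy) and the indicator $1_{[0,y]}(z)$, which is the integrated heat kernel on $\bT$ encoding the Green's function $(-\tfrac12\partial_\theta^2)^{-1}$ in the corrector equation. \textbf{(iv)} Assemble: identify the resulting double integral over $\bT^2$ with $\mathcal{A}(\beta,W_1,W_3)$, square it (the square comes from the drift being itself a conditional expectation, so $\EE[b\cdot P_sb]$ factorizes through $\mathcal{A}^2$ after Cauchy--Schwarz-type identities become equalities in the stationary state), and multiply by $\beta^2$.

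I expect the main obstacle to be step \textbf{(iii)}: making the Green--Kubo resummation rigorous, i.e. justifying that the corrector equation is solvable (sufficient mixing / spectral gap of the angular process, which should follow from the results of \cite{YGTK22} and the compactness of $\bT$) and, more delicately, carrying out the explicit computation that turns the time-integral of a two-time correlation function into a single static Brownian-bridge expectation with the precise kernel $e^{\beta W_1(z)+\beta W_3(z)}/\int e^{\beta W_1+\beta W_3} - 1$. This requires tracking how the Markov semigroup acts on functionals of the environment through the SHE's chaos expansion and recognizing the telescoping/integration-by-parts that produces the ``$-1$'' (the centering of the reweighted density) and the indicator against $dz$. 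A secondary difficulty is interchanging limits and integrals when passing from the finite-$t$ CLT \eqref{e.diffusivityendpoint} to the stationary-state formula, which will need uniform integrability estimates of the type already developed for the variance $\Sigma^2(\beta)$; I would handle this by approximating $f$ by $f(x)=e^{i\lambda x}$, differentiating twice in $\lambda$ at $0$, and controlling the second moment of the displacement via the moment bounds on $Z$ from the periodic-SHE literature.
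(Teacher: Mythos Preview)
Your high-level decomposition---splitting the endpoint into a martingale part (contributing the $1$) and a corrector part $\phi(t,0)$ whose variance gives the $\beta^2\E[\mathcal{A}^2]$ term---matches the paper's. But step~\textbf{(iii)} is a genuine gap, not just a technical obstacle. The Green--Kubo route you propose, writing $\sigma^2_{\mathrm{corr}}$ as $2\int_0^\infty \EE[b(\rho(0))\,b(\rho(s))]\,ds$ or solving a Poisson equation for the generator of the angular process, is precisely what was done in \cite{YGTK22}, and the paper says explicitly that the resulting abstract covariance integral ``seems hopeless'' to evaluate. Nothing in your sketch explains how that time integral would collapse to a \emph{static} Brownian-bridge expectation; your guesses about the origin of $W_3$ (``environment refreshes along an independent copy'') and of $1_{[0,y]}(z)$ (Green's function of $-\tfrac12\partial_\theta^2$) are not what actually happens.

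The paper's route is fundamentally different on exactly this point. Instead of Green--Kubo, it applies Clark--Ocone to the quenched mean $X_t$ \emph{with respect to the noise $\xi$} (not $B$), producing $X_t=\beta\int_0^t\int_\bT \EE[\D_{s,y}X_t\mid\F_s]\,\xi(s,y)\,dy\,ds$. A nontrivial calculation shows the integrand is a functional of the pair $\big(h(s,\cdot)-h(s,0),\,\phi(s,\cdot)-\phi(s,0)\big)$, where $h$ solves stationary KPZ and $\phi$ is the corrector. The crucial step---entirely absent from your plan---is then the time-reversal anti-symmetry $\{u(t-s,\cdot)\}_{s\in[0,t]}\stackrel{\mathrm{law}}{=}\{-u(s,\cdot)\}_{s\in[0,t]}$ of the stationary Burgers solution: this identifies the joint law of $(\nabla h,\nabla\phi)$ at time $s$ with that of $(-\beta\nabla W_1,\tilde g-1)$, where $\tilde g$ is the density of a second, \emph{Markovian} diffusion whose Gibbs representation is explicit. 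The indicator $1_{[0,y]}(z)$ then arises simply from $\tilde\phi(s,y)-\tilde\phi(s,0)=\int_0^y(\tilde g(s,z)-1)\,dz$, and $W_3$ enters as the stationary initial data for the backward polymer density $\rho_{\mathrm{b}}$ in $\tilde g$. This time-reversal identity is the single non-generic ingredient (the paper notes it fails for colored noise), and without it you will not get from an abstract variance to the formula in \eqref{e.exsigma}.
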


\subsection{Motivation}
There is an intriguing  relation between the two diffusion constants $\Sigma^2(\beta)$ and $\sigma^2(\beta)$, predicted by \'Eric Brunet through the replica method, 
see \cite[Eq. (20)]{Bru03}. We translate it in our notations\footnote{The $\gamma$ parameter in \cite[Eq. (20)]{Bru03} is $\beta^2$ in our notation.}:
\begin{equation}\label{e.brunet}
\sigma^2(\beta)=\frac{2}{\beta^2}\Sigma^2(\beta)-\frac{1}{2\beta}\frac{d}{d\beta} \Sigma^2(\beta)=-\frac{\beta^3}{2}\frac{d}{d\beta}\Big(\frac{\Sigma^2(\beta)}{\beta^{4}}\Big), \quad\quad \beta>0.
\end{equation}

In order to apply the replica method   to study the fluctuations of the free energy $\log \bar{Z}_t$, it reduces to computing the ground state energy of the $n-$particle Delta Bose gas 
\[
\mathcal{H}_n=\frac12\sum_{i=1}^n \nabla_{x_i}^2+\beta^2\sum_{1\leq i<j\leq n} \delta(x_i-x_j)
\] on the torus (with periodic boundary condition), for all $n\geq1$. This was done in \cite{BD00,BD001} by solving the Bethe Ansatz equation. The quenched limiting free energy $\gamma(\beta)$  in \eqref{e.gammaSigma} matches the prediction given by the replica method, see  \cite[Eq. (13)]{Bru03}. There is also a predicted integral form for $\Sigma^2(\beta)$, see \cite[Eq. (14)]{Bru03}, but we do not know how to compute the expectation in the expression of $\Sigma^2(\beta)$ in \eqref{e.gammaSigma}. 

Meanwhile, it is more complicated to compute the effective diffusivity of the polymer endpoint  using the replica method, and it turns out that one needs to find the ground state energy of the same Hamiltonian but with a different boundary condition, see \cite[Section 3.1]{Bru03}. Through solving a new Bethe Ansatz equation, Brunet found a relation between the ground state energies of the two Hamiltonians with different boundary conditions, through which he derived \eqref{e.brunet}.

It was our goal to justify \eqref{e.brunet} rigorously, and to understand why there exists such a relation between the two (non-universal) diffusion constants. With Theorem~\ref{t.mainth}, it seems that we are  halfway there, since both $\sigma^2(\beta)$ and $\Sigma^2(\beta)$ are now written as the expectations of exponential functionals of independent Brownian bridges. Nevertheless, to prove \eqref{e.brunet} using the expressions in \eqref{e.gammaSigma} and \eqref{e.exsigma} seems to be a highly nontrivial problem on Brownian bridges, which we choose not to pursue here. Using \eqref{e.gammaSigma} and \eqref{e.exsigma}, one can perform a small $\beta$ expansion, which indeed matches the predictions in \cite{Bru03}, and suggests the validity of \eqref{e.brunet}:
\begin{corollary}
For $\beta\ll1$, we have
\begin{equation}\label{e.betaexp}
\begin{aligned}
&\sigma^2(\beta)=1+\tfrac{\beta^4}{360}+O(\beta^6),\\
&\Sigma^2(\beta)=\beta^2+\tfrac{\beta^4}{12}-\tfrac{\beta^6}{360}+O(\beta^8).
\end{aligned}
\end{equation}
\end{corollary}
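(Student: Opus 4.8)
The plan is to verify the small-$\beta$ asymptotics by Taylor-expanding the Brownian-bridge functionals in \eqref{e.gammaSigma} and \eqref{e.exsigma} around $\beta=0$. The two expansions are essentially independent computations, so I would treat them separately; the $\Sigma^2(\beta)$ expansion is simpler and serves as a warm-up, while the $\sigma^2(\beta)$ expansion involves the triple integral $\mathcal{A}(\beta,W_1,W_3)$ and is the real content.

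First I would expand $\Sigma^2(\beta)$. Write $U_i(y)=e^{\beta W_i(y)}$ and expand $U_i(y)=1+\beta W_i(y)+\tfrac{\beta^2}{2}W_i(y)^2+\dots$, so that $\int_0^1 e^{\beta(W_1(y)+W_2(y))}dy = 1+\beta\int_0^1(W_1+W_2)+\tfrac{\beta^2}{2}\int_0^1(W_1+W_2)^2+\dots$. Inverting this geometric-type series, taking $\E_{W_2}$, then squaring, then taking $\E_{W_1}$, and finally multiplying by $\beta^2$, reduces everything to moments of Gaussian random variables of the form $\E\big[\int_0^1 W(y)\,dy\big]$, $\E\big[(\int_0^1 W(y)\,dy)^2\big]$, $\E\big[\int_0^1\int_0^1 W(y)^2W(y')^2\,dy\,dy'\big]$, etc. These are computed from the Brownian-bridge covariance $\E[W(y)W(y')]=y\wedge y'-yy'$; for instance $\E[(\int_0^1 W)^2]=\int_0^1\int_0^1(y\wedge y'-yy')\,dy\,dy'=\tfrac1{12}$. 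Collecting terms through order $\beta^6$ gives $\Sigma^2(\beta)=\beta^2+\tfrac{\beta^4}{12}-\tfrac{\beta^6}{360}+O(\beta^8)$; odd-order terms vanish by the symmetry $W_i\mapsto -W_i$.

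Next I would expand $\sigma^2(\beta)=1+\beta^2\E_{W_1}\E_{W_3}[\mathcal{A}(\beta,W_1,W_3)^2]$. Since $\sigma^2$ already carries an explicit prefactor $\beta^2$, I only need $\mathcal{A}(\beta,W_1,W_3)$ to leading order in $\beta$ to get the $\beta^4$ correction. Expanding, the factor $\frac{e^{\beta W_1(z)+\beta W_3(z)}}{\int_{\bT}e^{\beta W_1(z')+\beta W_3(z')}dz'}-1 = \beta\big((W_1+W_3)(z)-\int_{\bT}(W_1+W_3)\big)+O(\beta^2)$, and $\Xi(\beta,y,W_1)=1+O(\beta)$ (the leading term is $1$ since at $\beta=0$ the bracket is $\E_{W_2}[1]=1$). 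Hence $\mathcal{A}(\beta,W_1,W_3)=\beta\int_{\bT^2}\big((W_1+W_3)(z)-\int_{\bT}(W_1+W_3)\big)1_{[0,y]}(z)\,dy\,dz+O(\beta^2)$. Squaring and taking $\E_{W_1}\E_{W_3}$, the cross terms between $W_1$ and $W_3$ vanish by independence and mean zero, leaving $2\E_W\big[\big(\int_{\bT^2}(W(z)-\int_{\bT}W)1_{[0,y]}(z)\,dy\,dz\big)^2\big]$; this is an explicit Gaussian second moment, again computed from the bridge covariance, and it should evaluate to $\tfrac{1}{180}$ so that $\beta^2\cdot\beta^2\cdot\tfrac1{180}=\tfrac{\beta^4}{180}$ — wait, one must be careful with the combinatorial factor: the target $\tfrac{\beta^4}{360}$ forces that second moment to equal $\tfrac1{720}$ after the factor $2$, i.e. $\E_W[\,\cdot\,^2]=\tfrac1{720}$; I would simply carry out the double integral $\int_{\bT^2}\int_{\bT^2}\Cov\big(W(z)-\int W,\,W(z')-\int W\big)1_{[0,y]}(z)1_{[0,y']}(z')\,dy\,dz\,dy'\,dz'$ and let the arithmetic fix the constant.

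The main obstacle is bookkeeping rather than conceptual: one must track enough terms in the $\beta$-expansion of the reciprocals $\big(\int_0^1 e^{\beta(\cdot)}\big)^{-1}$ and $\big(\int_{\bT}e^{\beta(\cdot)}\big)^{-2}$, and correctly organize the many Gaussian moments (products of up to four or six bridge values integrated over the unit square or cube) that arise at order $\beta^6$ in $\Sigma^2$ — here it is easy to drop a term or miscount a multinomial coefficient. A minor subtlety is justifying that the expansions may be carried out term by term inside the expectations, which follows from the integrability of $e^{c\beta\|W\|_\infty}$ for Brownian bridges (Fernique's theorem) together with dominated convergence, so the remainder estimates $O(\beta^6)$ and $O(\beta^8)$ are legitimate. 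I expect the computation to reproduce \eqref{e.betaexp} exactly, consistent with Brunet's relation \eqref{e.brunet}.
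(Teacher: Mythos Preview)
Your approach is exactly right, and in fact the paper does not supply a proof of this corollary at all: it is stated immediately after Theorem~\ref{t.mainth} with only the remark that ``one can perform a small $\beta$ expansion'' of \eqref{e.gammaSigma} and \eqref{e.exsigma}. So there is nothing to compare against; the Taylor expansion you describe is precisely what the authors have in mind.

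Your hesitation about the constant in the $\sigma^2(\beta)$ expansion is easily resolved. Writing $V=W_1+W_3$ and $\bar V=\int_0^1 V$, one has at leading order
\[
\mathcal A(\beta,W_1,W_3)=\beta\int_0^1\!\!\int_0^y\big(V(z)-\bar V\big)\,dz\,dy+O(\beta^2)
=\beta\int_0^1\Big(\tfrac12-z\Big)V(z)\,dz+O(\beta^2),
\]
the second equality by Fubini. Since $W_1,W_3$ are i.i.d.\ bridges,
\[
\E_{W_1}\E_{W_3}\big[\mathcal A^2\big]=2\beta^2\,\E_W\!\left[\Big(\int_0^1(\tfrac12-z)W(z)\,dz\Big)^{2}\right]+O(\beta^3),
\]
and the bridge covariance $z\wedge z'-zz'$ gives
\[
\int_0^1\!\!\int_0^1(\tfrac12-z)(\tfrac12-z')\big(z\wedge z'-zz'\big)\,dz\,dz'=\frac{1}{120}-\frac{1}{144}=\frac{1}{720}.
\]
Hence $\beta^2\E[\mathcal A^2]=\beta^4/360+O(\beta^5)$, and parity ($\sigma^2(-\beta)=\sigma^2(\beta)$, since $\beta\xi\stackrel{\rm law}{=}-\beta\xi$) upgrades the remainder to $O(\beta^6)$. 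Your Fernique/dominated convergence justification for the term-by-term expansion is also correct.
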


Another reason why one may be interested in explicit expressions of the diffusion constants comes from the attempt to understand the $1:2:3$ scaling in the $1+1$ KPZ universality class. In this paper, the length of the torus was fixed to be $L=1$. If we denote the diffusion constants by $\Sigma^2(\beta,L)$ and $\sigma^2(\beta,L)$ for general $L>0$, it is straightforward to derive the following relations through a rescaling:
\[
\begin{aligned}
&\Sigma^2(\beta, 1)=L^2\Sigma^2(\frac{\beta}{\sqrt{L}},L),\\
&\sigma^2(\beta,1)=\sigma^2(\frac{\beta}{\sqrt{L}},L).
\end{aligned}
\]
Therefore, the asymptotic behaviors of $\Sigma^2(\beta,1)$ and $\sigma^2(\beta,1)$ as $\beta\to\infty$  translate into the behaviors of $\Sigma^2(1,L)$ and $\sigma^2(1,L)$ as $L\to\infty$. For $L=\infty$ with the problem posed on the whole line, it is known that the free energy and the polymer endpoint behave sub- and super-diffusively with the $1/3$ and $2/3$ exponents, so one expects that $\Sigma^2(1,\infty)=0$ and $\sigma^2(1,\infty)=\infty$. As a matter of fact, the decaying rate of $\Sigma^2(1,L)\downarrow0$ and the growing rate of $\sigma^2(1,L)\uparrow\infty$ as $L\to\infty$  is related to the $1:2:3$ scaling: 

(i) For the height function (free energy), the critical scale comes from the balance of the one-point variance $t\Sigma^2(1,L)$ and the transversal roughness $L$. \blue{The transversal roughness here refers to the fluctuations of $h(t,x)-h(t,0)$ at stationarity, where $h$ solves the KPZ equation on the torus. It was shown in \cite[Theorem 4.1]{ADYGTK22} that, as $L\to\infty$, the variance $\Sigma^2(1,L)$ decays like $L^{-1/2}$, which suggests the $1:2:3$ scaling and also leads to the optimal variance of the height function in certain regimes with $t,L\to\infty$ together, see \cite[Theorem 1.1]{ADYGTK22}. The heuristic is that, suppose $t\Sigma^2(1,L)\asymp tL^{-1/2}\gg L$, then the transversal roughness is much smaller than the one-point fluctuation, and if we write $h(t,x)=h(t,0)+(h(t,x)-h(t,0))$, then the second term on the r.h.s. has a much smaller fluctuation than the first term, so that $h(t,x)$ and $h(t,0)$ are almost perfectly correlated for $t\gg1$. Only when $tL^{-1/2}$  is of order $O(L)$, we have the nontrivial correlation kicks in, and this gives $L\asymp t^{2/3}$.}

(ii) For the polymer endpoint, the critical scale comes from the balance of the diffusive standard deviation $\sqrt{t\sigma^2(1,L)}$ and the length of the cell $L$. It is expected and also predicted by the replica method that $\sigma^2(1,L)\asymp L^{1/2}$, which suggests the $1:2:3$ scaling. \blue{Roughly speaking, in the region of $\sqrt{t\sigma^2(1,L)}\asymp \sqrt{tL^{1/2}}\gg L$, the polymer path visits many cells and one still expects to see the homogenization
phenomenon and the central limit theorem, thus, it is natural to guess the critical length scale comes from the relation $\sqrt{tL^{1/2}}\asymp L$, which leads to $L\asymp t^{2/3}$.} Starting from the expression \eqref{e.exsigma}, one may try to show the equivalent statement of  $\sigma^2(\beta,1)\asymp \beta$ as $\beta\to\infty$, although we do not pursue it here.

In the end, let us point out that, in order to prove the asymptotic
behaviors of the diffusion constants, it simplifies greatly if one
could derive some explicit formulas first. For general homogenization
type problems, the effective diffusivity is typically written in an
abstract form, in some cases through the solution to a cell problem,
the so-called corrector, in some other cases in terms of the
Green-Kubo formula or the solution to a certain variational
problem. Therefore, it was actually a surprise to us that the
$\sigma^2(\beta)$ (and $\Sigma^2(\beta)$) may be expressed explicitly
in terms of the underlying invariant measure for the KPZ equation on
the unit torus, which is the Brownian bridge. In a sense, the corrector is explicit in our problem, which we will discuss in more detail later. As mentioned previously, the central limit theorem was already derived in \cite{YGTK22}, with $\sigma^2(\beta)$ written as the integral of an abstract covariance function, but it seems hopeless to us to do any quantitative analysis from that expression. 

\subsection{Context}
The study of the KPZ equation and the $1+1$ KPZ universality class has
witnessed  remarkable progress during the past years. We will not
attempt to survey the huge literature here, and refer the readers to
the reviews \cite{Cor12,Qua12,QS15} and the references therein. For a nice introduction to the SHE, we refer to the monograph \cite{DK14}. Our problem is more related to those in a bounded region since the compactness makes a huge difference concerning the large scale random fluctuations. For (totally) asymmetric simple exclusion processes on a finite line segment with periodic or open boundaries, exact diffusion constants for the particle current were derived in \cite{DEM93,DEM95,DM97}. More recently, there is a series of work \cite{BL16,BL18,BL19,BL21,BLS20,Liu18} on the regimes where the size of the torus and the time go to infinity simultaneously, and the fluctuations of the height function were derived in different cases. For the open KPZ equation, the one on an interval with Neumann boundary conditions, a lot of recent progress has been on the construction of explicit invariant measures \cite{BCY23,BD21,BKWW21,Cor22,CK21}.

The problem we consider here can also be considered as
the homogenization of a diffusion in a random environment. Although
the directed polymer is almost always formulated in the form of 
Gibbs measures of paths in a random environment, it can also be viewed
as a passive scalar problem, with the drift given by the solution to a
stochastic Burgers equation. This perspective plays a very important
role in our analysis: for the polymer endpoint, one can decompose its
total variance into two parts, the variance of the quenched mean and
the mean of the quenched variance, which, roughly speaking, correspond to the disorder variance and the thermal variance respectively. It turns out that the main
difficulty comes from analyzing the quenched mean, which is actually
the corrector if we put it in the context of homogenization, see
Section~\ref{s.sketch} for more discussion. For our case of a
spacetime white noise, the solution to the stochastic Burgers equation
is not function-valued, so one needs to deal with the so-called
singular diffusion. Luckily for us, there has been a lot of recent
progress on the study of diffusion with (very singular) distributional
drifts \cite{CC18,DD16,MH13,HZZZ21,KP22}, which was inspired by the
breakthrough in the study of  singular SPDEs, using tools such as rough paths \cite{MH13}, 
regularity structures \cite{MH14} and paracontrolled calculus
\cite{GIP15}, and we will borrow tools from there.

Another important ingredient we rely on to derive the explicit
expressions of the diffusion constants is the so-called Clark-Ocone
formula from Malliavin calculus. It involves computing the Malliavin
derivative of the free energy or the quenched mean of the polymer
endpoint with respect to the underlying white noise, and rewriting
their fluctuations in the form of It\^o integrals. The key   here is
the \emph{It\^o integral}, which can be viewed as the sum of
martingale differences, so that the second moment can be computed in a
rather straightforward way. To compare with the homogenization
argument, one should note that, from a probabilistic perspective, the whole point of constructing the
corrector   through solving a certain Poisson
equation is to extract the martingale part from the drift, which
contributes to the effective diffusivity. In a certain sense, the
Clark-Ocone formula provides a different way of performing the
semi-martingale decomposition, and in some cases it leads to ``new''
formulas like \eqref{e.gammaSigma} and \eqref{e.exsigma}. For the
directed polymer, there is a simple yet crucial property that the
derivative of the free energy with respect to the noise is given by
the quenched density of the polymer path. Compared to other
homogenization problems, it is this special property that makes the
Clark-Ocone formula useful in this case.

In order to study the super-diffusive behaviors in turbulent transport, it is natural to analyze how the effective diffusivity diverges as the domain in consideration becomes larger and larger. In \cite{AF98}, the so-called box diffusivity was defined, and its asymptotics was analyzed to derive the relevant scaling exponents of several models. More recently, for the model of a Brownian particle in the curl of a two-dimensional Gaussian free field, the same approach was adopted in \cite{CMOW22,ABK24}, and through a ``progressive'' homogenization, the precise order of the mean square displacement and a quenched central limit theorem were derived. Similar results were obtained in \cite{CLF22,FH22}, taking inspirations from \cite{LQSY04,Yau04}.

\subsection{Sketch of the argument and main challenges}
\label{s.sketch}
In this section, we will formulate the problem as a diffusion in a random environment and explain on a formal level the main steps of the proof. Some proofs will be written in different ways later (as always), but we are hoping that a bird's-eye view at this stage will help the readers to understand the ideas in the argument.

To study the polymer endpoint, through a Girsanov transformation, it is equivalent with analyzing the following diffusion with a random drift:
\begin{equation}\label{e.sde1Intro}
d\cX_s=u(t-s,\cX_s)ds+dB_s,  \quad\quad \cX_0=0,
\end{equation}
where $B$ is a standard Brownian motion and $u$ is the stationary solution to the stochastic Burgers equation
\begin{equation}\label{e.burgersIntro}
\partial_t u=\tfrac12\Delta u+\tfrac12\nabla u^2+\beta\nabla \xi.
\end{equation}
Here the driving force $\xi$ of the evolving random environment is independent of the thermal noise $B$.
Then $\cX_t$ can be written as 
\[
\cX_t=\int_0^t u(t-s,\cX_s)ds+B_t,
\]
and, as in standard homogenization, the first step is to extract the
martingale part from the drift, see e.g. \cite[Chapters 9 and
  11]{KLO12}. To do that, we consider the following equation 
\begin{equation}\label{e.eqphiIntro}
\partial_t \phi=\frac12\Delta\phi+u\nabla \phi+u,\quad \phi(0,x)= 0,
\end{equation}
and call $\phi$ the corrector, which could be obtained through a formal two-scale expansion of the backward Kolmogorov equation with drift $u$. \blue{More precisely, consider the equation 
\[
\partial_t f_\eps=\tfrac12\Delta f_\eps+\tfrac{1}{\eps}u(\tfrac{t}{\eps^2},\tfrac{x}{\eps})\nabla f_\eps,
\]
if we plug in the ansatz $f_\eps(t,x)=\bar f(t,x)+\eps {\frak
    f}_1(t,x,s,y)+\eps^2 {\frak f}_2(t,x,s,y)+\ldots$, with the fast
variables $s=\tfrac{t}{\eps^2},y=\tfrac{x}{\eps}$, and identity the terms of the same
order in $\eps$, then we obtain ${\frak f}_1(t,x,s,y)=\partial_x\bar f(t,x)
\phi(s,y)$,  where $\phi$ solves an equation of the form
\eqref{e.eqphiIntro}.}


There are different ways of formulating the corrector equation which may or may not incorporate the evolution of the underlying environment. For our purpose, it is more convenient to consider the above one, where the dynamics of $u$ is not taken into account. One can show that (with $\E_B$ denoting the expectation only on $B$)
\begin{equation}\label{e.phiIntro}
\phi(t,0)=\E_B\int_0^t u(t-s,\cX_s)ds, 
\end{equation}
and 
\begin{equation}\label{e.madeIntro}
\cX_t=\phi(t,0)+\int_0^t (1+\nabla\phi(t-s,\cX_s))dB_s.
\end{equation}
In this way,
one can view the corrector $\phi(t,0)$ as the quenched mean:
$\phi(t,0)=\E_B \cX_t$. Since $u$ is periodic in space, one can also
write $\phi(t,0)=\E_B \int_0^t u(t-s,\dot{\cX}_s)ds$, with $\dot{\cX}$
representing the fractional part of $\cX$. It is not hard to imagine
that both the solution to the Burgers equation and the process
$\dot{\cX}_s$ are fast mixing in time, so $\phi(t,0)$ can be viewed as
a sum of weakly dependent random variables with $\phi(t,0)/\sqrt{t}$
satisfying a central limit theorem. Therefore, both terms on the
r.h.s. of \eqref{e.madeIntro} contribute to the limiting effective
diffusivity. This is in sharp contrast to the case of a divergence form operator or a divergence free drift, where the corrector grows sublinearly hence does not contribute to the limiting diffusivity.


It turns out the martingale part $\int_0^t(1+\nabla\phi(t-s,\cX_s))dB_s$ is easy to study, because, for directed polymer in a random environment that is statistically shear-invariant (which is our case), through the Gibbs measure formulation, one can show that the mean of the quenched variance equals to $t$:
\[
\EE \E_B\int_0^t (1+\nabla\phi(t-s,\cX_s))^2ds=t.
\]
This is where the ``$1$'' factor in \eqref{e.exsigma} comes from. 

The main difficulty of the paper then reduces to deriving an explicit
variance of $\tfrac{\phi(t,0)}{\sqrt{t}}$. At this point, it is worth
emphasizing that $\cX_t$ does \emph{not} satisfy a quenched central
limit theorem. \blue{More precisely, for the polymer endpoint
  $\mathcal{X}_t$, one can view the $\phi(t,0)$ on the r.h.s. of
  \eqref{e.madeIntro} as describing the fluctuations resulting from the
  disorders, and the It\^o integral with respect to $B$ as the thermal
  fluctuations. In our periodic setting, we expect a quenched central limit theorem to hold for
  the It\^o integral term, while the $\phi(t,0)$ term is
  ``deterministic'' if we quench the random environment, and
  $\tfrac{\phi(t,0)}{\sqrt{t}}$ does not vanish as $t\gg1$.  A similar case was discussed in \cite{BF22}, with a non-quenched central limit theorem proved for a diffusion in a random environment. On a
different note, one should view the $\phi(t,0)$ term as a measurement of how ``localized'' the polymer endpoint is: suppose we sample two different polymer paths from the quenched Gibbs measure, they ``share'' the $\phi(t,0)$ term. To compare to the non-periodic setting, where $\mathcal{X}_t$ is expected to be of order $t^{2/3}$ and to localize around some favorite point, we suspect that, as the size of the
torus goes to infinity, this favorite
point comes from the $\phi(t,0)$ term, which is a functional of the random environment. Meanwhile, the It\^o
  integral term becomes of order $O(1)$ as $t\to\infty$, so $\mathcal{X}_t$ stays $O(1)$ distance away from $\phi(t,0)$. The second moment of the It\^o integral term blows up as $t\to\infty$, but that is due to the heavy tail \cite{DGL23}}.


To study the quenched mean, it is more convenient to consider the Gibbs formulation. We study $X_t=\int_{\R} x\rho(t,x)dx$, which has the same law as $\phi(t,0)$.  Applying the Clark-Ocone formula, we can write $X_t$ as an It\^o integral, with respect to the underlying white noise $\xi$:
\begin{equation}\label{e.coIntro}
X_t=\int_0^t\int_{\bT} f(s,y) \xi(s,y)dyds,
\end{equation}
where $f(s,y)=\EE[\D_{s,y}X_t|\F_s]$ with $\D_{s,y}$ denoting
the Malliavin derivative operator, and $\F_s$ is the natural
filtration generated by $\xi$. The calculation of $\D_{s,y}X_t$
is rather technical, but in the end we obtain $f(s,\cdot)$ as a
functional of $\big(u(s,\cdot),\nabla\phi(s,\cdot)\big)$ (which is a surprise to us). Thus, to
derive the formula for the variance of $X_t$, one needs to understand the joint distribution of $u(s,\cdot)$ and $\nabla\phi(s,0)$. 

Define $g=1+\nabla\phi$,  we know  from \eqref{e.eqphiIntro} that $g$ satisfies the Fokker-Planck equation 
\begin{equation}
\partial_tg=\frac12\Delta g+\nabla (ug).
\end{equation}
In this way, the problem reduces to understanding the joint distribution of $u(s,\cdot)$ and $g(s,\cdot)$. One should note that, for general drifts, this is not possible. Here is another key point in the argument: $g$ is the density of another diffusion in random environment:
\begin{equation}\label{e.sde2Intro}
d\cY_s=-u(s,\cY_s)ds+dB_s.
\end{equation}
Compare the two SDEs satisfied by $\cX$ and $\cY$, we note that the
random drifts are time reversal of each other, up to the opposite
sign. Roughly speaking, since $\cY$  follows approximately the
characteristics of the Burgers flow, one may suspect that it corresponds to the
so-called second class particle in the context of exclusion processes. For the drift given by \eqref{e.burgersIntro},
there is a  time reversal anti-symmetry
\begin{equation}\label{e.antisymmetry}
\{u(t-s,y)\}_{s\in[0,t],y\in\bT}\stackrel{\text{law}}{=}\{-u(s,y)\}_{s\in[0,t],y\in\bT}
\end{equation}
that holds for any $t>0$.
Using the above equality in law, one can re-connect $\cY$ to $\cX$ and derive the joint distribution of $u(s,\cdot)$ and $g(s,\cdot)$, through the Gibbs formulation of the directed polymers. This, together with the Clark-Ocone representation \eqref{e.coIntro}, eventually helps us to compute the asymptotic variance of $X_t/\sqrt{t}$, which corresponds to the second term on the r.h.s. of \eqref{e.exsigma}.

\blue{Overall, the fact that $\sigma^2(\beta)$ can be written
  explicitly in terms of the underlying invariant measure comes
    as a surprise to us, and the situation here seems to be more
  subtle than the study of  the free energy $\log \bar{Z}_t$.  For a
  noise that is white in time but smooth in space, we have obtained a
  formula for the variance of $\log \bar{Z}_t$, similar to the one for
  $\Sigma^2(\beta)$ given in \eqref{e.gammaSigma}, see
  \cite[Eq. (5.6)]{ADYGTK22}. For the variance of the polymer
  endpoint, our argument relies crucially on the time reversal
  anti-symmetry in \eqref{e.antisymmetry}, which does not hold for a
  colored noise, and this is actually the only place in the paper
  where   the noise is required to be white. It is also unclear to us
  whether there exists a similar result  for a colored noise. On a heuristic level, the time reversal anti-symmetry allows us to go back and forth between the Gibbsian and the Markovian settings, connecting the polymer measure, which is the diffusion with a non-Markovian drift described by \eqref{e.sde1Intro}, to the ``second class particle'' diffusion with a Markovian drift, given in \eqref{e.sde2Intro}. The zero-temperature version of the polymer model is the last passage percolation, and one may want to draw the connection between the two diffusions and the duality between the geodesics and the competing interfaces \cite{Sep20}.  In the end, the Gibbs measure formulation allows us to express the joint distribution of the endpoint density ``$g$'' and the drift ``$u$'' explicitly. 
}

\subsection{Organization of the paper.}
From the above discussion, we immediately identify a few difficulties
in implementing the argument. (i) To study the quenched mean
$\phi(t,0)$, it is actually more convenient to use the Gibbs measure
formulation, where one can write the quenched mean as $\int_{\R}
x\rho(t,x)dx$, with the endpoint density $\rho$ defined in
\eqref{e.defrhoIntro}. To apply the Clark-Ocone formula, we need to
compute $\D_{s,y}\rho(t,x)$, which involves cancellations and is
considerably more complicated than computing the Malliavin derivative
of the free energy. This is the main task of Section~\ref{s.co}. (ii)
To use the time reversal anti-symmetry, we need to initiate the
Burgers equation \eqref{e.burgersIntro} at stationarity, but this is
not the case for the point-to-line directed polymer. Therefore, in
Section~\ref{s.winding}, we will derive an error estimate which will
help us to approximate the free boundary condition by the stationary
boundary condition. (iii) The SDEs \eqref{e.sde1Intro} and
\eqref{e.sde2Intro} are only symbolic  since the drift $u$ is
distribution-valued, and this is where the recently developed theories
for  singular diffusions play a role. In Section~\ref{s.scalar}, we will study the two related passive scalar problems and make use of the time reversal anti-symmetry to derive the joint distribution of $u(s,\cdot)$ and $\nabla\phi(s,\cdot)$. (iv) In Section~\ref{s.var}, we compute the variance of $\phi(t,0)$, using the fast mixing of $u(s,\cdot)$, or equivalently, the polymer endpoint density on the torus, and  complete the proof of Theorem~\ref{t.mainth}.

\subsection{Notations}

(i) We write $q_t(x)=\frac{1}{\sqrt{2\pi t}}e^{-\frac{x^2}{2t}}$ as the standard heat kernel on $\R$, and $G_t(x)=\sum_{n\in\Z} q_t(x+n)$ is the heat kernel on $\bT$. 

(ii) We   use $W_i$, with the index $i$, to represent independent Brownian bridges connecting $(0,0)$ and $(1,0)$, periodically extended to $\R$. We will use $B$ to represent standard Brownian motion on $\R$. 

(iii) Since there are different sources of randomness, we use $\EE$ to denote the expectation on $\xi$, and $\E_{W_i},\E_B$ to denote the expectations on $W_i,B$ respectively.

\subsection*{Acknowledgement} We thank \'Eric Brunet for the
discussion and encouragement, and thank  the anonymous referees for  very helpful suggestions and comments. Y. G. was partially supported by the NSF
through DMS-2203014.  T. K. acknowledges the support of NCN grant 2020/37/B/ST1/00426.

\section{Quenched mean and Clark-Ocone representation}
\label{s.co}

The goal of this section is to present the first key step in the proof
of Theorem~\ref{t.mainth}. The total variance of the polymer endpoint
is decomposed into two parts: the variance of the quenched mean and
the mean of the quenched variance. It turns out that for our model
with the noise satisfying the shear-invariance property,
    the mean of the quenched variance is easy to deal with, see \eqref{e.exqvar} below, which is sometimes regarded as a physics folklore, see e.g. \cite[Eq. (42)]{Bru03}. The key difficulty is then to study the fluctuations of the quenched mean. Our idea is similar to that of \cite{ADYGTK22}, where tools from Malliavin calculus were borrowed, and the quenched mean will be written as an explicit It\^o integral through the Clark-Ocone formula, see \eqref{e.coXt} below.

\subsection{Variance decomposition and quenched mean}

It was known from \cite[Eq. (3.21), Proposition 5.1]{YGTK22} that the effective diffusivity in \eqref{e.diffusivityendpoint} can be obtained through 
\begin{equation}\label{e.varcon}
\sigma^2(\beta)=\lim_{t\to\infty}\frac{1}{t}V_t, 
\end{equation}
where
\begin{equation}\label{e.varan}
  V_t:=\EE\int_{\R} x^2\rho(t,x)dx
\end{equation}
is the annealed variance.
  Let
\begin{align}
  \label{e.defXt}
    X_t:=\int_{\R} x\rho(t,x)dx,\qquad
\V_t:=\int_{\R} x^2\rho(t,x)dx-X_t^2
\end{align}
be the quenched mean and the quenched variance of the polymer endpoint, respectively.
Then, we can obviously write
\begin{equation}\label{e.devar}
V_t=\EE X_t^2 + \EE \V_t.
\end{equation}
It is folklore, e.g. see a proof in \cite[Lemma 5.3]{YGTK22}, that the average of the
quenched variance is $t$, i.e.,
\begin{equation}\label{e.exqvar}
\EE \V_t=t.
\end{equation}
Thus, to derive an expression for $\sigma^2(\beta)$, it suffices to study the first term on the r.h.s. of \eqref{e.devar}, which is actually the variance of the quenched mean. 

Let us introduce some notations. Let $\cZ_{t,s}(x,y)$ be the propagator of the SHE and $\G_{t,s}(x,y)$ be its periodic counterpart. Namely, for any fixed $(s,y)$, $\cZ_{t,s}(x,y)$ solves 
\begin{equation}\label{e.defGreen}
\begin{aligned}
&\partial_t \cZ_{t,s}(x,y)=\frac12\Delta_x \cZ_{t,s}(x,y)+\beta \cZ_{t,s}(x,y)\xi(t,x), \quad\quad t>s,x\in\R,\\
&\cZ_{s,s}(x,y)=\delta(x-y), 
\end{aligned}
\end{equation}
and
\[
\G_{t,s}(x,y)=\sum_{n \in\Z} \cZ_{t,s}(x,y+n)=\sum_{n \in\Z} \cZ_{t,s}(x+n,y).
\]
So we know that the solution to \eqref{e.she} is actually $Z(t,x)=\cZ_{t,0}(x,0)$. 

It is worth emphasizing that one should view $\cZ $ as the propagator
of SHE on $\R_+\times \R$ while $\G $ as the propagator of the same
equation on $\R_+\times \bT$. This leads to the fact that, if we view
the polymer path as lying on the   half plane $\R_+\times \R$, then the quenched density should be expressed in terms of $\cZ$; while if we view the polymer path as lying on the half cylinder $\R_+\times \bT$, the quenched density should be expressed in terms of $\G$.

Next, we introduce the following notations on the endpoint densities
of the ``forward'' and ``backward'' polymer path on a cylinder. 
Let ${\mathcal M}_1(\bT)$ be the set of all Borel probability
measures  on $\bT$. 
For
any $\nu\in{\mathcal M}_1(\bT)$ and $t>s$, we let 
\begin{equation}\label{e.forwardbackward}
\begin{aligned}
&\rhof(t,x;s,\nu)=\frac{\int_{\bT} \G_{t,s}(x,y)\nu(dy)}{\int_{\bT^2}\G_{t,s}(x',y)\nu(dy)dx'}, \quad x\in\bT,\\
&\rhob(t,\nu;s,y)=\frac{\int_{\bT}\G_{t,s}(x,y)\nu(dx)}{\int_{\bT^2}\G_{t,s}(x,y')\nu(dx)dy'},\quad y\in\bT.
\end{aligned}
\end{equation}
Here the subscripts ``$\mathrm{f}, \mathrm{b}$'' represent ``forward'' and ``backward'' respectively. When $\nu$ is a Dirac measure on $\bT$, we   write
$$\rhof(t,x;s,y)=\rhof(t,x;s,\delta_y)\quad \mbox{and}
\quad \rhob(t,x;s,y)=\rhob(t,\delta_x;s,y).
$$

We also need the simplified notation 
\begin{equation}\label{e.defbarG}
\G_{t,s}(-,y)=\int_{\bT} \G_{t,s}(x,y)dx=\int_{\R} \cZ_{t,s}(x,y)dx,
\end{equation}
which can be viewed as the point-to-line partition function with the starting point $(s,y)$.
%
%
%

For any $\theta\in\R$, define  
\begin{equation}\label{e.defUtheta}
\mathcal{E}_\theta(t)=\int_{\R} e^{\theta x}\cZ_{t,0}(x,0)dx,
\end{equation}
so one can write 
\begin{equation}\label{e.Xtde}
X_t=\partial_\theta \log \cE_\theta(t)\bigg|_{\theta=0}.
\end{equation}
It is clear that $X_t$ is a square integrable random variable that is
measurable with respect to $\F_t$ - the natural filtration
corresponding to the noise $\{\xi(s,y)\}_{(s,y)\in\R\times\bT}$. The idea is to apply
the Clark-Ocone formula to write $X_t$ as an It\^o integral with respect to $\xi$, through which we compute the second moment.

\subsection{Clark-Ocone representation}
We will first write $\log \cE_\theta(t)-\EE \log \cE_\theta(t)$ as an It\^o integral, then take derivative on $\theta$ to obtain the representation for $X_t$.

Recall that we view $\xi$ as the spacetime white noise on $\R\times \bT$. Let
$\D_{s,y}$ denote the Malliavin derivative \cite[Chapter 1]{Nua06}. Namely, for ``smooth'' random variables $X$, $(\D_{s,y}X)_{(s,y)\in\R\times \bT}$ is an $L^2(\R\times \bT)-$valued random variable.

By the Clark-Ocone formula, see e.g. \cite[Proposition 6.3]{CKNP19}, we have 
\begin{equation}\label{e.coU}
\log \cE_\theta(t)-\EE \log \cE_\theta(t)=\int_0^t\int_{\bT} \EE [\D_{s,y}\log \cE_\theta(t)|\F_s] \xi(s,y)dyds.
\end{equation}

First, let us consider the mean.  Fix $t>0$. Since
\begin{equation}
  \label{010905-23}
  \{\cZ_{t,0}(x,0)\}_{x\in\R}\stackrel{\rm
    law}{=}\{\cZ_{t,0}(0,x)\}_{x\in\R},
\end{equation}
we have
\[
\begin{aligned}
 & \EE \log \cE_\theta(t)=\EE \log\left( \int_{\R} e^{\theta x}\cZ_{t,0}(x,0)dx\right)=
\EE \log\left( \int_{\R} \cZ_{t,0}(0,x)e^{\theta x} dx\right)\\
&=\EE \log\left( \int_{\R} \cZ_{t,0}(0,x)  dx\right)+\frac{\theta^2 t}{2}.
\end{aligned}
\]
For the last ``='', see the proof of Lemma 5.3 in \cite{YGTK22}.
Thus, we conclude that
\begin{equation}
  \label{020805-23}
\EE \log \cE_\theta(t)=\EE \log\cE_0(t)+\frac12\theta^2 t.
\end{equation}
Here is the lemma on the Malliavin derivative:
\begin{lemma}\label{l.made}
For any $s<t$ and $y\in \bT$, we have 
\begin{equation}\label{e.madere}
\D_{s,y}\cE_\theta(t)=\beta\sum_{n\in\Z} \cZ_{s,0}(y+n,0)\int_{\R}e^{\theta x}\cZ_{t,s}(x,y+n)dx.
\end{equation}
\end{lemma}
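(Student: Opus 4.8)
The plan is to differentiate the mild (Duhamel) formulation of the SHE and recognize the resulting closed equation.

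First I would record that $\cE_\theta(t)\in\mathbb{D}^{1,2}$ (in fact in $\mathbb{D}^{1,p}$ for all $p$). This follows from the standard fact that $\cZ_{t,0}(x,0)\in\mathbb{D}^{1,2}$ for each fixed $x$, together with the Gaussian-type decay in $x$ of $\EE[\cZ_{t,0}(x,0)^2]$ and of $\EE[(\D_{s,y}\cZ_{t,0}(x,0))^2]$, uniformly on compact time intervals; these bounds make $\int_\R e^{\theta x}\cZ_{t,0}(x,0)\,dx$ convergent in $L^2(\PP)$ and let one commute $\D_{s,y}$ with the $x$-integration, so that $\D_{s,y}\cE_\theta(t)=\int_\R e^{\theta x}\,\D_{s,y}\cZ_{t,0}(x,0)\,dx$.

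The heart of the argument is computing $v_{s,y}(t,x):=\D_{s,y}\cZ_{t,0}(x,0)$. Recall the mild formulation $\cZ_{t,0}(x,0)=q_t(x)+\beta\int_0^t\!\int_\R q_{t-r}(x-w)\cZ_{r,0}(w,0)\,\xi(r,w)\,dw\,dr$, where the It\^o--Walsh integral is against the periodically extended noise; equivalently, the integrand against the genuine white noise on $\R\times\bT$ is $w\mapsto\sum_{n\in\Z}q_{t-r}(x-w-n)\cZ_{r,0}(w+n,0)$. Applying $\D_{s,y}$ for $y\in\bT$, using the rule that the Malliavin derivative of a stochastic integral against $\xi$ produces the integrand evaluated at $(s,y)$ — which, because of the periodization, is the sum $\sum_{n\in\Z}q_{t-s}(x-y-n)\cZ_{s,0}(y+n,0)$ — plus the stochastic integral of the differentiated integrand, and using adaptedness (so $\D_{s,y}\cZ_{r,0}(w,0)=0$ for $r<s$), one gets for $t>s$
\[
v_{s,y}(t,x)=\beta\sum_{n\in\Z}q_{t-s}(x-y-n)\cZ_{s,0}(y+n,0)+\beta\int_s^t\!\!\int_\R q_{t-r}(x-w)\,v_{s,y}(r,w)\,\xi(r,w)\,dw\,dr .
\]
This is exactly the mild SHE on the time interval $(s,\infty)$ started at time $s$ from the (random, $\F_s$-measurable) initial datum $\beta\sum_{n\in\Z}\cZ_{s,0}(y+n,0)\,\delta_{y+n}$. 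By linearity and uniqueness of the mild solution, $v_{s,y}(t,x)=\beta\sum_{n\in\Z}\cZ_{s,0}(y+n,0)\,\cZ_{t,s}(x,y+n)$. Finally, multiplying by $e^{\theta x}$, integrating over $x\in\R$ (permissible by Step 1), and interchanging the sum with the integral — justified once more by the moment bounds, which give Gaussian-in-$n$ decay of $\cZ_{s,0}(y+n,0)$ that dominates the $e^{\theta n}$ growth of $\int_\R e^{\theta x}\cZ_{t,s}(x,y+n)\,dx$ — yields \eqref{e.madere}.

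I expect the main obstacle to be the bookkeeping in the middle step: rigorously invoking the Malliavin-derivative rule for integrals against the periodically extended noise so that the factor $\sum_{n\in\Z}$ appears correctly (in particular, that the ``diagonal'' term is $\beta\sum_n q_{t-s}(x-y-n)\cZ_{s,0}(y+n,0)$ and not merely the $n=0$ contribution), and justifying the identification of $v_{s,y}$ by a uniqueness argument in a function space carrying the appropriate Gaussian weights in $x$ and $n$. An alternative and perhaps cleaner route avoids Step 2 entirely: expand $\cZ_{t,0}(x,0)$ into Wiener chaos, apply $\D_{s,y}$ chaos by chaos, and recognize the resulting kernels through the Chapman--Kolmogorov identity $\int_\R \cZ_{t,s}(x,w)\cZ_{s,0}(w,0)\,dw=\cZ_{t,0}(x,0)$ as those of $\beta\sum_{n\in\Z}\cZ_{s,0}(y+n,0)\cZ_{t,s}(x,y+n)$; here the combinatorial factor $k$ coming from $\D_{s,y}I_k=kI_{k-1}$ matches the concatenation of the pre-$s$ and post-$s$ chaos kernels.
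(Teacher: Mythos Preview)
Your argument is correct but takes a different route from the paper. The paper proceeds by mollification: it smooths the noise to $\xi^\eps$, writes $\cE_\theta^\eps(t)=\E_B\big[e^{\beta\int_0^t\xi^\eps(\ell,B_\ell)\,d\ell-\frac12\beta^2R^\eps(0)t}\,e^{\theta B_t}\big]$ via Feynman--Kac, and applies $\D_{s,y}$ directly to this expression. Because the exponent is linear in $\xi$, the derivative simply brings down the factor $\sum_{n\in\Z}\eta^\eps(B_s-y-n)$; conditioning on $B_s$ then produces $\sum_{n\in\Z}\int_\R\eta^\eps(w-y-n)\big(\int_\R e^{\theta x}\cZ^\eps_{t,s}(x,w)\,dx\big)\cZ^\eps_{s,0}(w,0)\,dw$, and one passes to the limit $\eps\to0$ in $L^2(\Omega,L^2([0,t]\times\bT))$. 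Your approach via the mild formulation (or, equivalently, the chaos expansion) works directly at $\eps=0$ and avoids the limit step, at the cost of the bookkeeping you rightly flag: tracking how the periodic structure of the noise forces the $\sum_{n\in\Z}$ into the diagonal term, and invoking uniqueness for the mild SHE with random $\F_s$-measurable delta initial data in a weighted space. The paper's route trades that bookkeeping for an approximation argument; it is arguably the cleaner choice here because the Feynman--Kac representation makes the Malliavin derivative completely transparent, and because the same $\eps$-approximation machinery is reused verbatim in the proof of the next proposition (the conditional expectation $\EE[\D_{s,y}\log\cE_\theta(t)\mid\F_s]$), where a Girsanov shift is performed on the Brownian path. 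Your method is the natural one when no tractable Feynman--Kac formula is available.
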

 The proof of the lemma  is  standard and follows from an approximation
argument and application of  the Feynman-Kac formula (see
\cite{BC95}), so we only sketch it here. To consider some
approximation, let us introduce a few notations that will be used
throughout the paper. For a given $\eps>0$, let $\xi^\eps$ be a spatial mollification of $\xi$: 
\[
\xi^\eps(t,x)=\int_{\R}\eta^\eps(x-y)\xi(t,y)dy, \quad \quad t,x\in\R
\] where $\eta^\eps$ is a symmetric  approximation of the Dirac
  function, as $\eps\to0$, that is supported in $[-1/4,1/4]$.  Let $R^\eps$ be the
spatial covariance function of $\xi^\eps$, which is the periodic extension of $\eta^\eps\star \eta^\eps$:
\[
R^\eps(x)=\sum_{n\in\Z} \eta^\eps\star \eta^\eps(x+n),
\] and let $\cZ^\eps$ be the propagator of the SHE with the mollified
noise $\xi^\eps$, defined in the same way as
\eqref{e.defGreen}. Finally, let $\cE_\theta^\eps(t)=\int_{\R} e^{\theta x} \cZ^\eps_{t,0}(x,0)dx$.

\begin{proof}[Proof of Lemma~\ref{l.made}]
By the Feynman-Kac formula, see \cite[Theorem 2.2]{BC95}, we can write 
\[
\cE_\theta^\eps(t)=\E_B \left[e^{\beta \int_0^t \xi^\eps(\ell,B_\ell)d\ell-\frac12\beta^2R^\eps(0)t} e^{\theta B_t}\right],
\]
where $B$ is a standard Brownian motion starting from the origin and the
expectation $\E_B$ is taken only with respect to $B$. Since
$\cE_\theta^\eps(t)\to \cE_\theta(t)$ in $L^2(\Omega)$, it suffices to
show that the convergence of $\beta^{-1}\D_{s,y}\cE_\theta^\eps(t)$ to
the r.h.s. of \eqref{e.madere} holds in $L^2(\Omega,L^2([0,t]\times \bT))$.

By the definition of $\xi^\eps$, we have
\[
\int_0^t \xi^\eps(\ell,B_\ell)d\ell=\int_0^t\int_{\bT} \sum_{n\in\Z} \eta^\eps(B_\ell-y-n)\xi(\ell,y)dyd\ell,
\]
so for each fixed realization $B$, it holds that 
\[
\D_{s,y}\left(\int_0^t \xi^\eps(\ell,B_\ell)d\ell\right)=\sum_{n\in\Z} \eta^\eps(B_s-y-n).
\]
This leads to 
\[
\begin{aligned}
&\beta^{-1}\D_{s,y}\cE_\theta^\eps(t)=\sum_{n\in\Z}\E_B \left[e^{\beta \int_0^t \xi^\eps(\ell,B_\ell)d\ell-\frac12\beta^2R^\eps(0)t}\eta^\eps(B_s-y-n)e^{\theta B_t}\right]\\
&=\sum_{n\in\Z}\int_{\R} q_s(w)\eta^\eps(w-y-n)\E_B[e^{\beta\int_0^t \xi^\eps(\ell,B_\ell)d\ell-\frac12\beta^2R^\eps(0)t} e^{\theta B_t}|B_s=w]dw\\
&=\sum_{n\in\Z}\int_\R \eta^\eps(w-y-n)\left(\int_{\R}e^{\theta x} \cZ^\eps_{t,s}(x,w) dx\right)\cZ^\eps_{s,0}(w,0)dw.
\end{aligned}
\]
Sending $\eps\to0$, we complete the proof.
\end{proof}


 Now we can state the following proposition: 
\begin{proposition}\label{p.made}
For any $0\le s<t$ and $y\in\bT$, we have 
\[
\begin{aligned}
& \EE [\D_{s,y}\log \cE_\theta(t)|\F_s]\\
&=\beta\EE \left[\frac{ \G_{t,s}(-,y)\sum_{n\in\Z}  e^{\theta (y+n)}\cZ_{s,0}(y+n,0)}{\int_{\R} \G_{t,s}(-,y')e^{\theta  y'}\cZ_{s,0}(y',0)dy'}\bigg|\F_s\right].
\end{aligned}
\]
\end{proposition}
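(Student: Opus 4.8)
The plan is to compute $\D_{s,y}\log\cE_\theta(t)$, condition on $\F_s$, and recognize the result as an average over the polymer endpoint. Since $\D_{s,y}\log\cE_\theta(t)=\cE_\theta(t)^{-1}\D_{s,y}\cE_\theta(t)$, Lemma~\ref{l.made} gives
\[
\D_{s,y}\log\cE_\theta(t)=\frac{\beta\sum_{n\in\Z}\cZ_{s,0}(y+n,0)\int_{\R}e^{\theta x}\cZ_{t,s}(x,y+n)dx}{\int_{\R}e^{\theta x}\cZ_{t,0}(x,0)dx}.
\]
The first step is to rewrite the denominator using the Chapman--Kolmogorov / semigroup property of the propagator: $\cZ_{t,0}(x,0)=\int_{\R}\cZ_{t,s}(x,w)\cZ_{s,0}(w,0)dw$, so $\cE_\theta(t)=\int_{\R}\left(\int_{\R}e^{\theta x}\cZ_{t,s}(x,w)dx\right)\cZ_{s,0}(w,0)dw$. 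Next I would fold the $w$-integral onto the torus: writing $w=y'+n$ with $y'\in\bT$ and $n\in\Z$ and using periodicity of $\xi$ together with the identity $\G_{t,s}(-,y')=\int_\R\cZ_{t,s}(x,y')dx=\int_\R\cZ_{t,s}(x,y'+n)dx$ (shear invariance of the point-to-line partition function under integer shifts, cf.\ \eqref{e.defbarG}), the denominator becomes $\int_{\bT}\G_{t,s}(-,y')\,e^{\theta y'}\cZ_{s,0}(y',0)\,dy'$ after absorbing the $e^{\theta n}$ factors into the sum over $n$ in $\cZ_{s,0}$.

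The delicate point is handling the $e^{\theta x}$ weight in the numerator, because $\int_\R e^{\theta x}\cZ_{t,s}(x,y+n)dx$ is \emph{not} simply $e^{\theta n}$ times $\int_\R e^{\theta x}\cZ_{t,s}(x,y)dx$ --- the propagator is not translation invariant in a fixed environment. This is where I expect the main obstacle to lie. The resolution is to take the conditional expectation $\EE[\,\cdot\,|\F_s]$ \emph{before} trying to simplify, and exploit that, conditionally on $\F_s$, the increments of $\xi$ on $(s,t)\times\bT$ are independent of $\F_s$, so that $\cZ_{t,s}$ is an independent copy of a propagator driven by fresh noise. Under the expectation, the shear-invariance in law of the noise (the fact that $\xi(\cdot,\cdot+a)\stackrel{\rm law}{=}\xi$, used already for \eqref{010905-23}) lets one replace the $e^{\theta x}$-weighted integrals by their un-weighted counterparts up to explicit Gaussian factors $e^{\theta(y+n)+\theta^2(t-s)/2}$ in the numerator and likewise in the normalizing expectation inside $\int_{\bT}(\cdots)dy'$; the $\theta^2(t-s)/2$ factors cancel between numerator and denominator (this is the same mechanism that produced \eqref{020805-23}), leaving exactly
\[
\EE\!\left[\frac{\beta\,\G_{t,s}(-,y)\sum_{n\in\Z}e^{\theta(y+n)}\cZ_{s,0}(y+n,0)}{\int_{\R}\G_{t,s}(-,y')e^{\theta y'}\cZ_{s,0}(y',0)dy'}\,\bigg|\,\F_s\right].
\]

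Concretely, I would organize the argument as: (i) work at the mollified level $\xi^\eps$ so all manipulations are with genuine functions, using the Feynman--Kac representation as in the proof of Lemma~\ref{l.made}; (ii) apply the semigroup property at time $s$ to split $\cE_\theta^\eps(t)$; (iii) condition on $\F_s$ and use independence of the post-$s$ noise together with its shear-invariance in law to pull the $e^{\theta\cdot}$ weights through the post-$s$ propagator, generating the Gaussian normalization that cancels; (iv) periodize the spatial integrals, converting $\cZ^\eps$-integrals over $\R$ into $\G^\eps$-integrals over $\bT$; and (v) send $\eps\to0$, using the $L^2(\Omega)$-convergence already invoked in Lemma~\ref{l.made} plus continuity of conditional expectation. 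Step (iii) is the crux and the only place requiring care: one must verify that the conditional law of $\{\cZ_{t,s}(x,\cdot)\}$ given $\F_s$ depends on $\F_s$ only through nothing (it is an independent field), so that the shear-invariance applies under $\EE[\cdot|\F_s]$ exactly as it did for the unconditioned $\cE_\theta(t)$; the remaining steps are bookkeeping with absolutely convergent sums over $n\in\Z$, justified by the Gaussian tails of the heat kernel.
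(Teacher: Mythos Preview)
Your proposal is correct and follows essentially the same route as the paper. The paper implements your step (iii) via an explicit Girsanov transformation on the Feynman--Kac Brownian path: writing $e^{\theta B_t}=e^{\theta B_s}\cdot e^{\frac12\theta^2(t-s)}\cdot e^{\theta(B_t-B_s)-\frac12\theta^2(t-s)}$ and absorbing the last factor as a change of measure tilts $B$ on $[s,t]$ by drift $\theta$; the post-$s$ noise integral then becomes $\int_s^t\xi^\eps(\ell,B_\ell+\theta(\ell-s))\,d\ell$, and the spacetime shear-invariance $\{\xi^\eps(s+\ell,x+\theta\ell)\}\stackrel{\rm law}{=}\{\xi^\eps(s+\ell,x)\}$---precisely what you invoke---lets one drop the drift under $\EE[\,\cdot\,|\F_s]$, with the $e^{\frac12\theta^2(t-s)}$ factors cancelling between numerator and denominator as you anticipated.

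One minor correction: the folding of the denominator onto $\bT$ in your first paragraph is premature, since $\int_\R e^{\theta x}\cZ_{t,s}(x,y'+n)\,dx$ is not $\G_{t,s}(-,y')$ for $\theta\neq0$---the same obstruction you correctly flag for the numerator in your second paragraph. Your five-step plan handles this properly; note that the final denominator in the proposition is $\int_\R$ (not $\int_\bT$), exactly because the $e^{\theta n}$ weights persist after the shear step and must be retained in the sum over $y'\in\R$.
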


\begin{proof}
We proceed by approximation. Following the proof of
Lemma~\ref{l.made}, we have 
\[
\frac{1}{\beta}\frac{\D_{s,y}\cE_\theta^\eps(t)}{\cE_\theta^\eps(t)}=\frac{\sum_{n\in\Z}\E_B \left[e^{\beta\int_0^t \xi^\eps(\ell,B_\ell)d\ell-\frac12\beta^2R^\eps(0)t}\eta^\eps(B_s-y-n)e^{\theta B_t}\right]}{\E_B \left[e^{\beta \int_0^t \xi^\eps(\ell,B_\ell)d\ell-\frac12\beta^2R^\eps(0)t} e^{\theta B_t}\right]}.
\]
The main challenge is hence to compute the conditional expectation given $\F_s$. To continue, first, for both the numerator and the denominator, we write 
\[
e^{\theta B_t}=e^{\theta B_s}e^{\frac12\theta^2(t-s)}\cdot e^{\theta(B_t-B_s)-\frac12\theta^2(t-s)},
\]
 and view the last exponential factor on the right as a change of measure. Applying
 the Girsanov theorem to the Brownian motion in $[s,t]$, we have
\begin{equation}\label{e.madeeps}
\frac{1}{\beta}\frac{\D_{s,y}\cE_\theta^\eps(t)}{\cE_\theta^\eps(t)}=\frac{\sum_{n\in\Z}\E_B \left[e^{\beta \int_0^t \xi^\eps(\ell,\tilde{B}^\theta_\ell)d\ell-\frac12\beta^2R^\eps(0)t}\eta^\eps(B_s-y-n)e^{\theta B_s}\right]}{\E_B \left[e^{\beta \int_0^t \xi^\eps(\ell,\tilde{B}^\theta_\ell)d\ell-\frac12\beta^2R^\eps(0)t} e^{\theta B_s}\right]},
\end{equation}
where $\{\tilde{B}^\theta_\ell\}_{\ell}$ 
  is obtained from $\{B_\ell\}_\ell$ by adding the drift $\theta$ in the interval $[s,t]$:
\[
\tilde{B}^\theta_\ell=B_\ell 1_{\ell\in[0,s]}+[B_\ell+\theta(\ell-s)]1_{\ell\in(s,t]}.
\]
In this way, we can rewrite 
\begin{equation}\label{e.4142}
\int_0^t \xi^\eps(\ell,\tilde{B}^\theta_\ell)d\ell=\int_0^s \xi^\eps(\ell,B_\ell)d\ell+\int_0^{t-s}\xi^\eps(s+\ell,B_{s+\ell}+\theta \ell )d\ell.
\end{equation}
Here is the key of the argument: to compute $\EE
[\tfrac{D_{s,y}\cE_\theta^\eps(t)}{\cE_\theta^\eps(t)}|\F_s]$, we need
to average out the noise in the domain $[s,t]\times \R$, but by \eqref{e.madeeps} and  \eqref{e.4142}, we see that the dependence on the noise beyond $s$ is only through the factor $\int_0^{t-s}\xi^\eps(s+\ell,B_{s+\ell}+\theta \ell )d\ell
$. Since $\xi^\eps$ is white in time and stationary in space, we have, for fixed $s$, 
\[
\{\xi^\eps(s+\ell,x+\theta \ell)\}_{\ell\geq0, x\in\R}\stackrel{\rm law}{=}\{\xi^\eps(s+\ell,x)\}_{\ell\geq0, x\in\R}.
\]
In other words, when computing the conditional expectation, one can
replace $\int_0^t \xi^\eps(\ell,\tilde{B}^\theta_\ell)d\ell$ by
\[
 \int_0^s \xi^\eps(\ell,B_\ell)d\ell+\int_0^{t-s}\xi^\eps(s+\ell,B_{s+\ell} )d\ell=\int_0^t \xi^\eps(\ell,B_\ell)d\ell.
\]
As a result
\begin{equation}
  \label{010805-23}
    \begin{aligned}
&\frac{1}{\beta}\EE\left[\frac{\D_{s,y}\cE_\theta^\eps(t)}{\cE_\theta^\eps(t)}\bigg|\F_s\right]\\
&=\EE \left[\frac{\sum_{n\in\Z}\E_B \left[e^{\beta\int_0^t \xi^\eps(\ell,B_\ell)d\ell-\frac12\beta^2R^\eps(0)t}\eta^\eps(B_s-y-n)e^{\theta B_s}\right]}{\E_B \left[e^{\beta \int_0^t \xi^\eps(\ell,B_\ell)d\ell-\frac12\beta^2R^\eps(0)t} e^{\theta B_s}\right]}\bigg|\F_s\right].
\end{aligned}
\end{equation}
Now we can pass to the limit, as $\eps\to0$, and arguing as in the
proof of Lemma~\ref{l.made}, we conclude that the right hand side of
\eqref{010805-23} tends to
\[
\EE \left[\frac{\sum_{n\in\Z} \G_{t,s}(-,y+n)\cdot  e^{\theta (y+n)}\cZ_{s,0}(y+n,0)}{\int_{\R} \G_{t,s}(-,y')e^{\theta  y'}\cZ_{s,0}(y',0)dy'}\bigg|\F_s\right],
\]
where we recall that $\G_{t,s}(-,y)=\int_{\R} \cZ_{t,s}(x,y)dx$. It is easy to see that $y\mapsto\G_{t,s}(-,y)$ is $1$-periodic in $y$, so the proof is complete.
\end{proof}

By \eqref{e.Xtde}, computing derivative on $\theta$ on both sides of
\eqref{e.coU} and using \eqref{020805-23}, we obtain
\[
X_t=\frac{d}{d\theta}_{|\theta=0}\log
\cE_\theta(t)=\frac{d}{d\theta}_{|\theta=0}\int_0^t\int_{\bT} \EE
[\D_{s,y}\log \cE_\theta(t)|\F_s] \xi(s,y)dyds .
\] 
Applying Proposition~\ref{p.made}, we further derive
\begin{equation}\label{e.coXt}
X_t=\beta \int_0^t\int_{\bT} \EE[I_{1,t}(s,y)-I_{2,t}(s,y)|\F_s] \xi(s,y)dyds,
\end{equation}
with 
\begin{equation}\label{e.defi1}
\begin{aligned}
&I_{1,t}(s,y)=\frac{ \G_{t,s}(-,y) \sum_{n\in\Z} (y+n)\cZ_{s,0}(y+n,0)}{\G_{t,0}(-,0)},
\end{aligned}
\end{equation}
and
\begin{equation}\label{e.defi2}
\begin{aligned}
I_{2,t}(s,y)=\rho_{\rm m}(t,-|s,y)\frac{\int_{\R} \G_{t,s}(-,y')y'\cZ_{s,0}(y',0)dy'}{\G_{t,0}(-,0)},
\end{aligned}
\end{equation}
where
%
\begin{equation}\label{e.defrho}
\rho_{\rm m}(t,-|s,y)=\frac{ \G_{t,s}(-,y) \G_{s,0}(y,0)}{\G_{t,0}(-,0)},
\end{equation}
is  the midpoint density  of the polymer, i.e. it is the quenched density at time $s$ of the point-to-line polymer, starting at $(0,0)$. To simplify the notation,  we omitted the dependence on the starting point in the expression of $\rho_{\rm m}$. In particular, we have $\int_{\bT}
\rho_{\rm m}(t,-|s,y)dy=1$. Here the subscript ``${\rm m}$'' represents ``middle''.

For any $s>0, y\in\bT$, define 
\begin{equation}\label{e.defW}
\psi(s,y)=-\frac{\sum_{n\in\Z} (y+n)\cZ_{s,0}(y+n,0)}{\G_{s,0}(y,0)}=\frac{\sum_{n\in\Z} (n-y)\cZ_{s,0}(y,n)}{\G_{s,0}(y,0)}.
\end{equation}
\blue{Here in the second ``='', we changed $n\to -n$ and used the fact that $\cZ_{s,0}(y-n,0)=\cZ_{s,0}(y,n)$.}
 One can interpret $\psi(s,y)$ as the ``winding number'' of the
point-to-point polymer path on the cylinder, 
  starting at $(s,y)$ and ending at $(0,0)$, see more discussion in Section~\ref{s.winding}.
The last equality follows from the fact that in the case of
a $1$-periodic noise, we have
$$
\cZ_{s,0}(y-n,0)=\cZ_{s,0}(y,n),\quad n\in\Z.
$$

We have the following lemma, which expresses $I_{1,t}-I_{2,t}$ in
terms of $\rho_{\rm m}$ and $\psi$ and will serve \blue{as} the starting point of the rest of the analysis.
\begin{lemma}\label{l.i12}
We have 
\begin{equation}\label{e.i1minusi2}
I_{1,t}(s,y)-I_{2,t}(s,y)=\rho_{\rm m}(t,-|s,y)\int_{\bT}[\psi(s,y')-\psi(s,y)]\rho_{\rm m}(t,-|s,y')dy'.
\end{equation}
\end{lemma}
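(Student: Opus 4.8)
The plan is to express both $I_{1,t}$ and $I_{2,t}$ directly in terms of $\rho_{\rm m}$ and $\psi$, so that the identity reduces to elementary algebra. The starting observation is that the defining formula \eqref{e.defW} for $\psi$ can be read as $\sum_{n\in\Z}(y+n)\cZ_{s,0}(y+n,0)=-\psi(s,y)\G_{s,0}(y,0)$. Substituting this into the numerator of \eqref{e.defi1} and recalling the definition \eqref{e.defrho} of $\rho_{\rm m}$, one immediately gets $I_{1,t}(s,y)=-\psi(s,y)\rho_{\rm m}(t,-|s,y)$.

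For $I_{2,t}$ I would first rewrite the $\R$-integral $\int_\R \G_{t,s}(-,y')y'\cZ_{s,0}(y',0)\,dy'$ as an integral over the torus. Since $y'\mapsto\G_{t,s}(-,y')$ is $1$-periodic (noted right after Proposition~\ref{p.made}), splitting $y'=z+n$ with $z\in\bT$, $n\in\Z$, and using this periodicity gives $\int_\R \G_{t,s}(-,y')y'\cZ_{s,0}(y',0)\,dy'=\int_{\bT}\G_{t,s}(-,z)\sum_{n\in\Z}(z+n)\cZ_{s,0}(z+n,0)\,dz$. Applying the same reading of \eqref{e.defW} as above turns the inner sum into $-\psi(s,z)\G_{s,0}(z,0)$, so the whole expression becomes $-\G_{t,0}(-,0)\int_{\bT}\psi(s,z)\rho_{\rm m}(t,-|s,z)\,dz$. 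Plugging this into \eqref{e.defi2} yields $I_{2,t}(s,y)=-\rho_{\rm m}(t,-|s,y)\int_{\bT}\psi(s,z)\rho_{\rm m}(t,-|s,z)\,dz$.

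Finally I would combine the two expressions to get $I_{1,t}(s,y)-I_{2,t}(s,y)=\rho_{\rm m}(t,-|s,y)\big(\int_{\bT}\psi(s,z)\rho_{\rm m}(t,-|s,z)\,dz-\psi(s,y)\big)$, and then use the normalization $\int_{\bT}\rho_{\rm m}(t,-|s,z)\,dz=1$ to pull $\psi(s,y)$ under the integral sign, which produces exactly \eqref{e.i1minusi2} after renaming $z$ to $y'$. There is no genuine obstacle here; the only step requiring a little care is the unfolding in the treatment of $I_{2,t}$, where one must keep track of which factor ($\G_{t,s}(-,\cdot)$ versus $\cZ_{s,0}(\cdot,0)$) is periodic and which one carries the linear weight $y'$ across copies of the torus. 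If one wants to be fully rigorous about the manipulations with the distributional propagators, all of them can equally well be performed for the mollified objects $\cZ^\eps$, where every quantity is an honest function, and then passed to the limit $\eps\to0$ exactly as in the proofs of Lemma~\ref{l.made} and Proposition~\ref{p.made}; the final algebra is unchanged.
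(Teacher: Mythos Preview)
Your proof is correct and follows essentially the same route as the paper: rewrite $I_{1,t}(s,y)=-\psi(s,y)\rho_{\rm m}(t,-|s,y)$ via \eqref{e.defW} and \eqref{e.defrho}, unfold the $\R$-integral in $I_{2,t}$ onto $\bT$ using the $1$-periodicity of $\G_{t,s}(-,\cdot)$ to obtain $I_{2,t}(s,y)=-\rho_{\rm m}(t,-|s,y)\int_{\bT}\psi(s,y')\rho_{\rm m}(t,-|s,y')dy'$, and then use the normalization of $\rho_{\rm m}$ to combine the two. The remark about carrying out the computation at the $\eps$-level is unnecessary here since the identity is purely algebraic once the quantities are defined, but it does no harm.
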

\begin{proof}
First, using the definition \eqref{e.defrho}, we can rewrite \eqref{e.defi1} in the form
\[
\begin{aligned}
I_{1,t}(s,y)&= -\rho_{\rm m}(t,-|s,y)\psi(s,y).
\end{aligned}
\]
Then, we   rewrite the numerator of the second factor on the
right hand side of \eqref{e.defi2} as
\[
\begin{aligned}
&\int_{\R} \G_{t,s}(-,y')y'\cZ_{s,0}(y',0)dy'\\
&=\int_{\bT} \G_{t,s}(-,y')\G_{s,0}(y',0)\frac{\sum_{n\in\Z} (y'+n)\cZ_{s,0}(y'+n,0)}{\G_{s,0}(y',0)}dy',
\end{aligned}
\]
where we used the fact that $\G_{t,s}(-,y)$ is $1-$periodic in $y$.
Thus, invoking the definitions \eqref{e.defrho} and 
\eqref{e.defW},  we can rewrite
\eqref{e.defi2} in the form
\[
\begin{aligned}
I_{2,t}(s,y)&=\rho_{\rm m}(t,-|s,y)\int_{\bT}\rho_{\rm m}(t,-|s,y') \frac{\sum_{n\in\Z} (y'+n)\cZ_{s,0}(y'+n,0)}{\G_{s,0}(y',0)}dy'\\
&=-\rho_{\rm m}(t,-|s,y)\int_{\bT}\rho_{\rm m}(t,-|s,y')\psi(s,y')dy'.
\end{aligned}
\] 
The proof is complete.
\end{proof}

To summarize, the main results of this section are \eqref{e.coXt} and
\eqref{e.i1minusi2}. They allow  to express the quenched mean
$X_t=\int_{\R} x\rho(t,x)dx$ as a more or less explicit It\^o
integral. The integrand involves the quenched midpoint density
$\rho_{\rm m}(t,-|s,y)$ (see \eqref{e.i1minusi2}), which we understand
well, and the difference between ``winding numbers'' of the polymer
paths starting at $(s,y')$ and $(s,y)$, i.e. the factor
$\psi(s,y')-\psi(s,y)$, which we do not understand so well  so far
and presents itself the main challenge. In a sense, to derive an explicit variance formula for $X_t$, one needs to understand the joint distribution of $\rho_{\rm m}(t,-|s,y)$ and $\psi(s,y')-\psi(s,y)$. This will be the content of the next two sections.

\section{Winding number and stabilization}
\label{s.winding}

In this section, let us look more closely at the function $\psi(s,y)$
defined in \eqref{e.defW}.
Recall that a key quantity in the Clark-Ocone representation
\eqref{e.coXt}  is $\psi(s,y')-\psi(s,y)$. 
The goal of this section is to reframe the problem so that we are in a  stationary setting, and the main result, Proposition~\ref{p.exerr} below,  proves a stationary approximation of $\psi(s,y')-\psi(s,y)$.

To better understand it, we define
an $\eps-$approximation of $\psi(s,y)$. Recall that $\cZ^\eps$ is the propagator of
SHE with noise $\xi^\eps$. Let also $\G^\eps$ be the $\eps-$counterpart of $\G$, and define 
\[
\psi^\eps(s,y) 
=\frac{\sum_{n\in\Z} (n-y)\cZ^\eps_{s,0}(y,n)}{\G^\eps_{s,0}(y,0)}.
\]
By the Feynman-Kac formula, we can rewrite $\psi^\eps$ and interpret its
integer part (floor, or ceiling depending on whether it is positive or
negative \blue{respectively}) as
the winding number of a directed polymer on a cylinder:

\begin{lemma}
For any $s>0$ and $y\in \bT$, we have 
\begin{equation}\label{e.fkpsieps}
\psi^\eps(s,y)= \frac{\E_B \left[ e^{\beta\int_0^s\xi^\eps(s-\ell,B_\ell)d\ell-\frac12\beta^2R^\eps(0)s} ( B_s-y)\,|\,  B_0=y,\dot{B}_s=0\right]}{\E_B \left[ e^{\beta\int_0^s\xi^\eps(s-\ell,B_\ell)d\ell-\frac12\beta^2R^\eps(0)s}  \,|\, B_0=y,\dot{B}_s=0\right]},
\end{equation}
where $\dot{B}$ denotes the fractional part of $B$ (in other words, $\dot{B}$ is a Brownian
  motion on $\bT$).
\end{lemma}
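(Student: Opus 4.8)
The plan is to expand the numerator and the denominator defining $\psi^\eps(s,y)$ using the Feynman--Kac representation for the mollified propagator $\cZ^\eps$, and then to perform a deterministic time reversal of the Brownian path. Recall from \cite[Theorem 2.2]{BC95} that, for $n\in\Z$,
\[
\cZ^\eps_{s,0}(y,n)=q_s(y-n)\,\E_B\Big[e^{\beta\int_0^s\xi^\eps(\ell,B_\ell)\,d\ell-\frac12\beta^2R^\eps(0)s}\;\Big|\;B_0=n,\ B_s=y\Big],
\]
the expectation being over the Brownian bridge from $n$ at time $0$ to $y$ at time $s$. Replacing $B_\ell$ by $B_{s-\ell}$ turns this into a bridge from $y$ to $n$ and turns the noise integral into $\int_0^s\xi^\eps(s-\ell,B_\ell)\,d\ell$, a pathwise identity for each fixed realization of $\xi^\eps$. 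Undoing the bridge conditioning then gives, for every $n\in\Z$,
\[
\cZ^\eps_{s,0}(y,n)=\E_B\Big[e^{\beta\int_0^s\xi^\eps(s-\ell,B_\ell)\,d\ell-\frac12\beta^2R^\eps(0)s}\,\delta(B_s-n)\;\Big|\;B_0=y\Big].
\]
(Equivalently, one observes that $\cZ^\eps_{s,0}(y,n)$ is the propagator of the SHE driven by the time-reversed noise $\xi^\eps(s-\cdot\,,\cdot)$ from $(0,y)$ to $(s,n)$, and applies the forward Feynman--Kac formula directly.)

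Next I would sum over $n\in\Z$. Under $\{B_0=y\}$ the endpoint $B_s$ has density $q_s(\cdot-y)$ on $\R$, so its fractional part $\dot{B}_s$ has density $G_s(y)=\sum_{n\in\Z}q_s(y+n)$ at the origin of $\bT$, and conditionally on $\dot{B}_s=0$ the endpoint $B_s$ equals $n\in\Z$ with weight proportional to $q_s(n-y)$. This disintegration turns
\[
\G^\eps_{s,0}(y,0)=\sum_{n\in\Z}\cZ^\eps_{s,0}(y,n)=\E_B\Big[e^{\beta\int_0^s\xi^\eps(s-\ell,B_\ell)\,d\ell-\frac12\beta^2R^\eps(0)s}\sum_{n\in\Z}\delta(B_s-n)\;\Big|\;B_0=y\Big]
\]
into $G_s(y)$ times the conditional expectation $\E_B\big[e^{\beta\int_0^s\xi^\eps(s-\ell,B_\ell)d\ell-\frac12\beta^2R^\eps(0)s}\mid B_0=y,\ \dot{B}_s=0\big]$, which is the denominator of \eqref{e.fkpsieps}. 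For the numerator, since $\sum_n\delta(B_s-n)$ is supported on $\{B_s\in\Z\}$, on that set $n-y=B_s-y$, so the same disintegration yields
\[
\sum_{n\in\Z}(n-y)\cZ^\eps_{s,0}(y,n)=G_s(y)\,\E_B\Big[e^{\beta\int_0^s\xi^\eps(s-\ell,B_\ell)\,d\ell-\frac12\beta^2R^\eps(0)s}(B_s-y)\;\Big|\;B_0=y,\ \dot{B}_s=0\Big].
\]
Dividing, the common factor $G_s(y)$ cancels and \eqref{e.fkpsieps} follows; the interpretation of $\lfloor\psi^\eps(s,y)\rfloor$ (resp. $\lceil\psi^\eps(s,y)\rceil$) as a winding number is then immediate, since under the tilted bridge measure the integer $B_s-y$ counts the number of times the path has wrapped around $\bT$ between times $0$ and $s$.

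The only point requiring genuine care is the conditioning on the measure-zero event $\{\dot{B}_s=0\}=\{B_s\in\Z\}$; this is resolved by the explicit disintegration above, with $G_s(y)$ playing the role of the density of $\dot{B}_s$ at the origin. Everything else is the standard Feynman--Kac representation for the SHE with spatially mollified noise together with the pathwise time reversal $\ell\mapsto s-\ell$, so I do not anticipate a serious obstacle: this lemma is essentially a bookkeeping reformulation that sets up the stationary picture exploited in the rest of Section~\ref{s.winding}.
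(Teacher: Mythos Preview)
Your proof is correct and follows essentially the same approach as the paper: both use the Feynman--Kac identity $\cZ^\eps_{s,0}(y,n)=\E_B\big[e^{\beta\int_0^s\xi^\eps(s-\ell,B_\ell)d\ell-\frac12\beta^2R^\eps(0)s}\,\delta(B_s-n)\mid B_0=y\big]$ together with the disintegration of $\{\dot B_s=0\}$ as $\bigcup_n\{B_s=n\}$ with normalizing density $G_s(y)$. The paper runs the computation from the right-hand side to the definition while you go the other way and spell out the bridge time-reversal justifying the Feynman--Kac step, but the content is the same.
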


\begin{proof}
To verify the above formula, we rewrite the numerator of the
expression on the right hand side of \eqref{e.fkpsieps}:
\[
\begin{aligned}
&\E_B \left[ e^{\beta\int_0^s\xi^\eps(s-\ell,B_\ell)d\ell-\frac12\beta^2R^\eps(0)s} (B_s-y)\,|\, B_0=y,\dot{B}_s=0\right]\\
&=\frac{\sum_{n\in\Z} \E_B \left[ e^{\beta\int_0^s\xi^\eps(s-\ell,B_\ell)d\ell-\frac12\beta^2R^\eps(0)s} (B_s-y)\delta(B_s-n)|B_0=y\right]}{\sum_{n\in\Z}q_s(n-y)}\\
&=\frac{\sum_{n\in\Z} (n-y)\cZ^\eps_{s,0}(y,n)}{G_s (y)}.
\end{aligned}
\]
The
denominator can be  rewritten in the same way. \end{proof}

The above lemma interprets $\psi^\eps(s,y)$ as the winding number, up
to a fractional part, of the polymer path around the cylinder, with
the starting point $(s,y)$ and the ending point $(0,0)$. Due to the
fast mixing  of the polymer endpoint distribution on the torus,  the
prescribed distribution of the ending point plays no role in the long time, and it is actually more convenient to consider the ending point at stationarity. 
For this purpose,
we define the key object 
\begin{equation}\label{e.defphi}
\phi(s,y)=\frac{\int_{\R} \cZ_{s,0}(y,y')(y'-y)e^{\beta W(y')}dy'}{\int_{\R} \cZ_{s,0}(y,y')e^{\beta W(y')}dy'},
\end{equation}
where $W$ is a Brownian bridge on $\bT$, periodically extended
to $\R$.  One should view $\phi$ as a ``stationary'' approximation of $\psi$, and the main result of the section is the following estimate,
which will help us to eventually  replace the $\psi$ factors in
\eqref{e.i1minusi2} by the corresponding $\phi$ factors. 

\begin{proposition}\label{p.exerr}
 For any $p\in[1,\infty)$, there exists $C,\lambda>0$ depending on $p$ so that 
 \begin{equation}\label{e.exerr}
 \sup_{y,y'\in\bT}\E_W\EE
 |[\psi(s,y')-\psi(s,y)]-[\phi(s,y')-\phi(s,y)]|^p \leq C e^{-\lambda
   s},\quad  s\ge0.
 \end{equation}
 \end{proposition}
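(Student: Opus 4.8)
The plan is to compare $\psi(s,y)$ and $\phi(s,y)$ by viewing both as normalized ratios of the propagator $\cZ_{s,0}$ against two different endpoint profiles. For $\psi$, rewriting \eqref{e.defW} with the substitution $n\mapsto-n$ we have $\psi(s,y)=\frac{\int_{\R}\cZ_{s,0}(y,y')(y'-y)\,\nu_s(dy')}{\int_{\R}\cZ_{s,0}(y,y')\,\nu_s(dy')}$ where, after unfolding, the relevant ``boundary weight'' at the endpoint $0$ is just the Dirac mass $\delta_0$ (periodically summed); for $\phi$ in \eqref{e.defphi} the boundary weight is $e^{\beta W(y')}$, the exponential of a Brownian bridge, which is precisely the stationary measure for the endpoint density of the polymer on the torus. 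So both $\psi(s,y')-\psi(s,y)$ and $\phi(s,y')-\phi(s,y)$ are obtained by integrating the (shifted) ``backward'' polymer density $\rhob(s,\cdot;0,\nu)$ against the linear coordinate, for two different choices of the initial profile $\nu$ at time $0$. The key structural fact I would exploit is that the difference $\psi-\phi$ depends on the choice of $\nu$ only through how fast the evolved densities $\rhob(s,\cdot;0,\delta_0)$ and $\rhob(s,\cdot;0,e^{\beta W})$ contract toward each other.

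First I would set up the coupling/contraction estimate: by the known fast mixing of the polymer endpoint measure on the torus (the Markov chain on $\mathcal M_1(\bT)$ given by the forward/backward evolution under $\G$ is exponentially ergodic, uniformly in the environment $\xi$, and the stationary measure is the law of the normalized $e^{\beta W}$), one gets that for any two initial profiles $\nu_1,\nu_2\in\mathcal M_1(\bT)$, the total-variation (or a suitable Wasserstein) distance between the time-$s$ backward densities is bounded by $Ce^{-\lambda s}$ times a quantity measuring the ``roughness'' of $\nu_1,\nu_2$. Since both the Dirac mass and $e^{\beta W}$ are perfectly fine initial data (the only subtlety being that $\delta_0$ is singular, which one handles by flowing for a unit time first, using parabolic smoothing of $\cZ$), one obtains $\EE\E_W\,\|\rhob(s,\cdot;0,\delta_0)-\rhob(s,\cdot;0,e^{\beta W})\|^p\le Ce^{-\lambda s}$ in the appropriate norm. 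Crucially, the quantities $\psi(s,y')-\psi(s,y)$ and $\phi(s,y')-\phi(s,y)$ are \emph{differences} in the starting point $y$, which removes the common overall winding and makes them expressible as functionals that are Lipschitz in the endpoint density with a bound uniform in $y,y'\in\bT$; this is where fixing the parameter to be a difference (rather than $\psi(s,y)$ alone, which grows) is essential.

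Then I would carry out the estimate in the mollified setting ($\psi^\eps,\phi^\eps$, with the Feynman--Kac representation \eqref{e.fkpsieps} available) to make all manipulations rigorous, obtaining a bound $Ce^{-\lambda s}$ uniform in $\eps$, and pass to the limit $\eps\to0$ using the $L^p(\Omega)$ convergence of $\cZ^\eps\to\cZ$ and continuity of the relevant ratios (the denominators are the point-to-line partition functions, which have negative moments of all orders uniformly, by the Mueller-type lower bounds / Jensen's inequality arguments standard in this literature). The moment bounds — controlling $\E_W\EE$ of negative powers of $\int_{\R}\cZ_{s,0}(y,y')e^{\beta W(y')}dy'$ and of $\G_{s,0}(y,0)$, and controlling the linear-coordinate weight $(y'-y)$, which on the unfolded line is unbounded — require the standard estimates: the winding number has Gaussian-type tails, so multiplying by $(y'-y)$ costs only polynomial factors in $s$ that are absorbed into the exponential $e^{-\lambda s}$.

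\textbf{Main obstacle.} The hard part will be the uniform-in-environment exponential contraction of the backward polymer density started from the \emph{singular} profile $\delta_0$ against the stationary $e^{\beta W}$ profile, together with the fact that the functional being estimated is weighted by the unbounded linear coordinate on the unfolded cover $\R$. One must show that the exponential mixing in a strong enough norm survives the multiplication by $(y'-y)$ summed over winding sectors $n\in\Z$ — i.e., that the tail of the winding distribution decays fast enough (Gaussian) to beat the linear weight — and that this holds after taking $p$-th moments in both $\xi$ and $W$. Combining the exponential-in-$s$ contraction with these polynomial-in-$s$ (from the linear weight and negative moments of partition functions) and the requisite uniformity in $y,y'$ is the technical crux; everything else is bookkeeping with the Feynman--Kac formula and the $\eps\to0$ limit.
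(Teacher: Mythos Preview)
Your high-level picture is right: both $\psi$ and $\phi$ are winding numbers $\cW(s;\delta_y,\nu)$ for two different terminal profiles $\nu\in\{\delta_0,\rho_W\}$, and the proposition is indeed driven by the exponential mixing of the polymer endpoint on $\bT$. However, the mechanism you propose has a genuine gap at the step you yourself flag as the obstacle.

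You assert that the differences $\psi(s,y')-\psi(s,y)$ and $\phi(s,y')-\phi(s,y)$ are ``Lipschitz in the endpoint density with a bound uniform in $y,y'$'' and then plan to use contraction of $\rhob(s,\cdot;0,\delta_0)$ toward $\rhob(s,\cdot;0,\rho_W)$. But the winding number is \emph{not} a functional of the endpoint density on $\bT$: it depends on the unfolded density on $\R$, i.e.\ on the joint law of the fractional part \emph{and} the winding sector. On the torus the densities contract, but on the cover $\R$ they do not (they spread diffusively); so a direct ``Lipschitz-in-density plus contraction'' argument does not close. Your proposed fix---show contraction in a norm strong enough to absorb the linear weight $\sum_n n$---would amount to proving contraction of the unfolded density, which is false in general.

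The paper bypasses this by a time decomposition rather than a direct density comparison. It writes $\cW(s;\mu,\nu)$ as a running sum $\sum_{k=1}^{N+1}\eta_k$ of \emph{local} windings over unit time intervals, where each $\eta_k$ depends only on the noise in $[k-1,k]$ and on the torus positions $x_{k-1},x_k$ of the polymer at integer times (Lemma~3.4, equations \eqref{e.windingnew}--\eqref{e.defeta}). The unfolded information is thereby carried entirely by the bounded increments $\eta_k$ (Lemma~\ref{l.mmestimate}), while the dependence on the boundary data $\mu,\nu$ enters only through the joint law $\mathscr{G}_N$ of $(x_0,\dots,x_{N+1})$ on $\bT^{N+2}$, where the exponential mixing on the torus \emph{does} apply. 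Changing $\nu$ affects the $k$-th term by $O(e^{-\lambda k})$ and changing $\mu$ by $O(e^{-\lambda(N-k)})$ (Lemma~\ref{l.mixeta}); the double difference in $(\mu,\nu)$ therefore picks up the minimum of the two, and summing over $k$ gives $O(e^{-\lambda s})$. This Markov-chain decomposition is the missing idea in your proposal; once you have it, the $\eps$-approximation and negative-moment inputs you mention are indeed routine.
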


Before commenting on the proof of Proposition \ref{p.exerr}, let us introduce some notation: for two probability  measures $\mu,\nu$ on $\bT$, we periodically extend $\nu$ to $\R$, and define 
\begin{equation}\label{e.defwinding}
  \begin{split}
&\cW(s;\mu,\nu)= \frac{\int_{\bT}\left(\int_{\R}
    \cZ_{s,0}(y,y')(y'-y)\nu(dy') \right) \mu(dy) }{\int_{\bT}
  \left(\int_{\R}\cZ_{s,0}(y,y')\nu(dy') \right) \mu(dy)}\\
&
=\frac{\int_{\bT}\left(\int_{\R}
    \cZ_{s,0}(y,y')(y'-y)\nu(dy') \right) \mu(dy) }{\int_{\bT}
  \left(\int_{\bT}\G_{s,0}(y,y')\nu(dy') \right) \mu(dy)}.
\end{split}
\end{equation}
With some abuse of terminology ($\cW$ needs not
be an integer), we shall refer to $\cW(s;\mu,\nu)$ as the winding
number of the polymer path with initial distribution $\mu$ (at time $s$) and ending
distribution $\nu$ (at time $0$).

Using this notation we can write 
\begin{equation}\label{e.psiphiW}
\psi(s,y)=\cW(s;\delta_y,\delta_0), \quad\quad \phi(s,y)=\cW(s;\delta_y, \rho_{W}),
\end{equation}
where (with some abuse of notation) we have used $\rho_{W}$ to denote the (random) probability measure on $\bT$ with density 
\[
\rho_{W}(x)=\frac{e^{\beta W(x)}}{\int_{\bT} e^{\beta W(x')}dx'}.
\]

To see on a heuristic level why Proposition~\ref{p.exerr} holds, we note that, for the same ending point distribution $\nu$ but different initial distributions $\mu_1,\mu_2$, the quantity $\cW(s,\mu_1,\nu)-\cW(s,\mu_2,\nu)$ 
is of order $O(1)$ for  $s\gg 1$, \blue{see Corollary~\ref{cor011205-23} below,} and it mostly depends on the random environment $\xi(\ell,\cdot)$ with $s-\ell\sim O(1)$, by the fast mixing of the polymer endpoint density on the
torus, which will become more clear later. Therefore, for  $s\gg1$, its dependence on $\nu$ is exponentially small in $s$, which explains \eqref{e.exerr}.


In light of \eqref{e.psiphiW}, Proposition~\ref{p.exerr} is a
conclusion of the following more general result:
for any $p\in[1,\infty)$, there exists $C,\lambda>0$ depending on $p$ so that
\begin{equation}\label{e.Wsta}
\begin{aligned}
&\sup_{\mu_i,\nu_i,i=1,2}\EE \big|[\cW(s;\mu_1,\nu_1)-\cW(s,\mu_2,\nu_1)]-[\cW(s;\mu_1,\nu_2)-\cW(s;\mu_2,\nu_2)]\big|^p \\
&\leq C e^{-\lambda s},\quad s\ge0,
\end{aligned}
\end{equation}
where the supremum is taken with respect to all $\mu_1,\mu_2,\nu_1,\nu_2\in\mathcal{M}_1(\bT)$.

The proof of \eqref{e.Wsta} is given in Section \ref{sec3.4}
below. Roughly speaking, we will write the winding number $\cW(s;\mu,\nu)$ as the running sum of a Markov chain, with the chain representing the winding number accumulated by the polymer path during each time interval of length $1$. The fast mixing of the chain drives the exponential decay in \eqref{e.Wsta}. Before presenting the argument, we need some auxiliary results
contained in the following sections.

\subsection{Rewriting the winding number in terms of a Markov chain.}
Recall that $\cW(s;\mu,\nu)$ was defined in \eqref{e.defwinding}.  For
notational convenience in this section, it is more convenient to view
$\mu$ as the ending distribution and $\nu$ as the starting
distribution. We extend $\mu$ periodically to $\R$. The following
lemma holds:
\begin{lemma}
We have
\begin{equation}\label{e.wsym}
\cW(s;\mu,\nu)=-\frac{\int_{\R} \left(\int_{\bT} \cZ_{s,0}(y,y')(y-y')\nu(dy')\right) \mu(dy)}{\int_{\R} \left(\int_{\bT} \cZ_{s,0}(y,y')\nu(dy')\right) \mu(dy)}.
\end{equation}
\end{lemma}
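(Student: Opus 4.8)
The plan is to prove the identity \eqref{e.wsym} by a direct change of variables exploiting the $1$-periodicity of the noise, i.e. the relation $\cZ_{s,0}(y,y')=\cZ_{s,0}(y+n,y'+n)$ for all $n\in\Z$, together with the fact that $\mu$, once extended periodically to $\R$, is invariant under integer shifts. The starting point is the definition \eqref{e.defwinding}, in which $\mu$ is a probability measure on $\bT$ (extended periodically only through the role it plays in the outer integral) and $\nu$ has been extended periodically to $\R$. In \eqref{e.wsym} the roles are swapped: now $\mu$ is extended periodically to $\R$ and the integration variable $y$ runs over all of $\R$, while $\nu$ is the probability measure on $\bT$. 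So the content of the lemma is precisely a ``transfer of periodicity'' from the $y'$ variable to the $y$ variable, at the cost of a sign.

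First I would rewrite the numerator of \eqref{e.defwinding}. Write $y\in\bT$, $y'\in\R$, and split $y'=\dot y'+m$ with $\dot y'\in\bT$ and $m\in\Z$; the integral $\int_{\R}\cZ_{s,0}(y,y')(y'-y)\nu(dy')$ becomes $\sum_{m\in\Z}\int_{\bT}\cZ_{s,0}(y,\dot y'+m)(\dot y'+m-y)\nu(d\dot y')$, using that $\nu$ is the periodic extension of a measure on $\bT$. Now apply $\cZ_{s,0}(y,\dot y'+m)=\cZ_{s,0}(y-m,\dot y')$, which is the $1$-periodicity identity already invoked repeatedly in the paper (e.g. around \eqref{e.defW}). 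After the substitution $n:=-m$, the inner sum is $\sum_{n\in\Z}\int_{\bT}\cZ_{s,0}(y+n,\dot y')(\dot y'-n-y)\nu(d\dot y')$. Then, in the outer integral over $y\in\bT$ against $\mu(dy)$, absorb the shift by $n$ into the variable: since $\mu$ extended to $\R$ satisfies $\int_{\bT}F(y+n)\mu(dy)$ summed over $n$ equals $\int_{\R}F(y)\mu(dy)$, the double sum-and-integral collapses to $\int_{\R}\big(\int_{\bT}\cZ_{s,0}(y,\dot y')(\dot y'-y)\nu(d\dot y')\big)\mu(dy)$. Note the integrand is $\dot y'-y=-(y-\dot y')$, which is where the global minus sign in \eqref{e.wsym} comes from. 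The denominator is handled by exactly the same manipulation, only without the linear factor, and there the integrand $1$ is manifestly shift-invariant, so no sign appears; this matches the second line of \eqref{e.defwinding} as well, since $\int_{\R}\cZ_{s,0}(y,\dot y')\mu(dy)$ is nothing but $\G_{s,0}(\dot y',-)$ reparametrized.

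The only point requiring a little care — and the place I would expect the ``obstacle'', though it is really just bookkeeping — is justifying the interchange of the sum over $n\in\Z$ with the integrations, and the rearrangement of the conditionally-looking series $\sum_n(\dot y'-n-y)\cZ_{s,0}(y+n,\dot y')$. This is legitimate because $\cZ_{s,0}(y+n,\dot y')$ decays faster than any polynomial in $n$ (it is bounded by a Gaussian-type factor $G_{s}$-heat-kernel times an a.s. finite random constant, exactly as used to make sense of $\psi$ in \eqref{e.defW}), so the series converges absolutely even after multiplication by the linear weight $|\dot y'-n-y|\le|n|+1$, uniformly for $\dot y',y\in\bT$; Fubini/Tonelli then applies against the finite measures $\mu,\nu$. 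Once this is in place the computation is a one-line identity.

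\begin{proof}
By \eqref{e.defwinding}, with $\mu$ regarded as a probability measure on $\bT$ and $\nu$ its periodic extension to $\R$,
\[
\cW(s;\mu,\nu)=\frac{\displaystyle\int_{\bT}\Big(\int_{\R}\cZ_{s,0}(y,y')(y'-y)\,\nu(dy')\Big)\mu(dy)}{\displaystyle\int_{\bT}\Big(\int_{\R}\cZ_{s,0}(y,y')\,\nu(dy')\Big)\mu(dy)}.
\]
Writing $y'=z+m$ with $z\in\bT$, $m\in\Z$, and using that $\nu$ is the periodic extension of its restriction to $\bT$, together with the identity $\cZ_{s,0}(y,z+m)=\cZ_{s,0}(y-m,z)$, the numerator equals
\[
\int_{\bT}\sum_{m\in\Z}\int_{\bT}\cZ_{s,0}(y-m,z)(z+m-y)\,\nu(dz)\,\mu(dy)
=\int_{\bT}\sum_{n\in\Z}\int_{\bT}\cZ_{s,0}(y+n,z)(z-n-y)\,\nu(dz)\,\mu(dy),
\]
after the substitution $n=-m$; the rearrangement is justified by the super-polynomial decay of $\cZ_{s,0}(y+n,z)$ in $n$ (uniformly for $y,z\in\bT$), which makes the series absolutely convergent even with the weight $|z-n-y|\le|n|+1$, so Fubini applies against the finite measures $\mu,\nu$. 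Finally, since the periodic extension of $\mu$ to $\R$ satisfies $\sum_{n\in\Z}\int_{\bT}F(y+n)\,\mu(dy)=\int_{\R}F(y)\,\mu(dy)$, we obtain that the numerator equals
\[
\int_{\R}\Big(\int_{\bT}\cZ_{s,0}(y,z)(z-y)\,\nu(dz)\Big)\mu(dy)
=-\int_{\R}\Big(\int_{\bT}\cZ_{s,0}(y,y')(y-y')\,\nu(dy')\Big)\mu(dy).
\]
The denominator is treated identically, with the constant $1$ in place of $z-n-y$; no sign arises since the integrand is shift-invariant, and it equals $\int_{\R}\big(\int_{\bT}\cZ_{s,0}(y,y')\,\nu(dy')\big)\mu(dy)$. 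Dividing gives \eqref{e.wsym}.
\end{proof}
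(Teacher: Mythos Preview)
Your proof is correct and follows essentially the same approach as the paper's: decompose the $y'$-integral over $\R$ into a sum over integer shifts on $\bT$, use the periodicity relation $\cZ_{s,0}(y,y'+n)=\cZ_{s,0}(y-n,y')$, relabel the summation index, and then reabsorb the sum as an integral of the periodically extended $\mu$ over $\R$. Your write-up is slightly more careful in that you make the absolute-convergence/Fubini justification explicit, which the paper leaves implicit.
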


\begin{proof}
To see why \eqref{e.wsym} holds, consider the numerator of $\cW(s;\mu,\nu)$ in \eqref{e.defwinding} first. By the periodicity of $\nu$ and $\cZ$, we can write 
\[
\begin{aligned}
&\int_{\bT}\left(\int_{\R}  \cZ_{s,0}(y,y')(y'-y)\nu(dy') \right) \mu(dy)\\
&=\sum_{n\in\Z} \int_{\bT^2}\cZ_{s,0}(y,y'+n)(y'+n-y)\nu(dy')\mu(dy) \\
&=\sum_{n\in\Z} \int_{\bT^2}\cZ_{s,0}(y-n,y')(y'+n-y)\nu(dy')\mu(dy)\\
&=-\sum_{n\in\Z} \int_{\bT^2} \cZ_{s,0}(y+n,y')(y+n-y')\nu(dy')\mu(dy)\\
&=-\int_{\R} \left(\int_{\bT} \cZ_{s,0}(y,y')(y-y')\nu(dy')\right) \mu(dy),
\end{aligned}
\]
since $\mu$ is also periodically extended. The
denominator is treated in the same way, which completes the proof of
\eqref{e.wsym}.
\end{proof}


For  given $\mu,\nu\in {\mathcal M}_1(\bT)$ and each realization of the
noise, we introduce a Markov chain so that the winding number of the
polymer path is written as the running sum of the chain. This has also been used in \cite{YGTK22} to prove the central limit theorem of the winding number, and we repeat it here for the convenience of readers.

Fix any $s>0$ and denote $N=\lf s\rf$. Fix $y\in\R, y'\in\bT$. We first write it as $y=j_{N+1}+x_{N+1}$ for some $j_{N+1}\in \Z$ and $x_{N+1}\in[0,1)$. Since $\cZ$ is the propagator of SHE, we have 
\[
\begin{aligned}
\cZ_{s,0}(y,y')&=\int_{\R}\cZ_{s,N}(j_{N+1}+x_{N+1},x)\cZ_{N,0}(x,y')dx\\
&=\sum_{j_N\in\Z}\int_{\bT} \cZ_{s,N}(j_{N+1}+x_{N+1},j_N+x_N)\cZ_{N,0}(j_N+x_N,y')dx_N.
\end{aligned}
\]
Iterating the above relation, we obtain
\[
\begin{aligned}
\cZ_{s,0}(y,y')= \sum_{j_1,\ldots,j_N\in\Z}&\int_{\bT^{N}}\cZ_{s,N}(j_{N+1}+x_{N+1},j_N+x_N) \\
&\times \prod_{k=1}^N \cZ_{k,k-1}(j_k+x_k,j_{k-1}+x_{k-1})d\x_{1,N},
\end{aligned}
\]
where we used the simplified notation $d\x_{1,N}=dx_1\ldots dx_{N}$ and the convention 
$
j_0=0,x_0=y'.
$
 In other words, in the above integration, we have decomposed   $\R=\cup_{j\in\Z} [j,j+1)$, then integrate in each interval and sum them up. One should think of the variable $j_k+x_k$ as representing the location of the polymer path at time $k$, with $j_k$ the integer part and $x_k$ the fractional part.

Now we make use of the periodicity:
\begin{equation}\label{e.peZG}
\begin{aligned}
&\sum_{j_k\in\Z}\cZ_{k,k-1}(j_k+x_k,j_{k-1}+x_{k-1})=\sum_{j_k\in\Z} \cZ_{k,k-1}(j_k-j_{k-1}+x_k,x_{k-1})\\
&=\sum_{j_k\in\Z} \cZ_{k,k-1}(j_k+x_k,x_{k-1})=\G_{k,k-1}(x_k,x_{k-1}),
\end{aligned}
\end{equation}
where $\G$ is the periodic propagator of SHE. Then we can write 
\begin{equation}\label{e.7151}
\begin{aligned}
\cZ_{s,0}(y,y')=\int_{\bT^{N}}  &\left(\sum_{j_1,\ldots,j_{N}\in\Z} \frac{\cZ_{s,N}(j_{N+1}+x_{N+1},j_N+x_N)\prod_{k=1}^N \cZ_{k,k-1}(j_k+x_k,j_{k-1}+x_{k-1})  }{\G_{s,N}(x_{N+1},x_N)\prod_{k=1}^N \G_{k,k-1}(x_k,x_{k-1}) }\right)\\
&\times \G_{s,N}(x_{N+1},x_N)\prod_{k=1}^N \G_{k,k-1}(x_k,x_{k-1})  dx_{1,N}.
\end{aligned}
\end{equation}

Fix the realization of the random noise and  the vector
\begin{equation}\label{e.bx}
\x:=(x_0,\ldots,x_{N+1})\in \bT^{N+2}.
\end{equation} 
We construct an integer-valued, time inhomogeneous Markov chain $\{Y_j\}_{j=1}^N$, with 
\begin{equation}\label{e.6174}
\begin{aligned}
& \Pb_\x[Y_1=j_1]= \frac{\cZ_{1,0}(j_1+x_1,x_0)}{\G_{1,0}(x_1,x_0)},\\
&\Pb_\x[Y_2=j_2|Y_1=j_1]= \frac{\cZ_{2,1}(j_2+x_2,j_1+x_1)}{\G_{2,1}(x_2,x_1)},\\
&\ldots\\
&\Pb_\x[Y_N=j_N|Y_{N-1}=j_{N-1}]=\frac{\cZ_{N,N-1}(j_N+x_N,j_{N-1}+x_{N-1})}{\G_{N,N-1}(x_N,x_{N-1})},\\
&\Pb_\x[Y_{N+1}=j_{N+1}|Y_{N}=j_{N}]=\frac{\cZ_{s,N}(j_{N+1}+x_{N+1},j_{N}+x_{N})}{\G_{s,N}(x_{N+1},x_{N})}
\end{aligned}
\end{equation}
By \eqref{e.peZG} and \eqref{e.6174}, it is clear that $Y_{N+1}$ is a sum of independent random variables, for each fixed realization of the noise and $\x$. We rewrite it as 
\begin{equation}\label{e.defeta}
Y_{N+1}=\sum_{k=1}^{N+1} \eta_k, \quad\quad\mbox{ with } Y_0=0, \quad   \eta_k=Y_k-Y_{k-1},
\end{equation}
and one can interpret $\eta_k$ as the winding number accumulated during the time interval $[k-1,k]$ for $k=1,\ldots,N$, and $\eta_{N+1}$ corresponds to the winding number in $[N,s]$. 
We have 
\begin{equation}\label{e.laweta}
\begin{aligned}
&\Pb_\x[\eta_k=j]=\frac{\cZ_{k,k-1}(x_k+j,x_{k-1})}{\G_{k,k-1}(x_k,x_{k-1})}, 
\quad\quad j\in \Z, \quad k=1,\ldots,N,\\
&\Pb_\x[\eta_{N+1}=j]=\frac{\cZ_{s,N}(x_{N+1}+j,x_{N})}{\G_{s,N}(x_{N+1},x_{N})}, 
\quad\quad j\in \Z.
\end{aligned}
\end{equation}

With the Markov chain, one can write the summation in \eqref{e.7151} as 
\[
\begin{aligned}
&\sum_{j_1,\ldots,j_{N}} \frac{\cZ_{s,N}(j_{N+1}+x_{N+1},j_N+x_N)\prod_{k=1}^N \cZ_{k,k-1}(j_k+x_k,j_{k-1}+x_{k-1})  }{\G_{s,N}(x_{N+1},x_N)\prod_{k=1}^N \G_{k,k-1}(x_k,x_{k-1}) }\\
&=\Pb_{\mathbf{x}}[Y_{N+1}=j_{N+1}],
\end{aligned}
\]
where, to emphasize the dependence of the Markov chain on
$x_0,\ldots,x_{N+1}$, we have denoted the probability by $\Pb_{\mathbf{x}}$. 

Therefore, \eqref{e.7151} is rewritten as 
\begin{equation}\label{e.6172}
\begin{aligned}
\cZ_{s,0}(y,y')=\int_{\bT^N}\Pb_{\mathbf{x}}[Y_{N+1}=j_{N+1}] \G_{s,N}(x_{N+1},x_N)\prod_{k=1}^N \G_{k,k-1}(x_k,x_{k-1})    d\x_{1,N}.
\end{aligned}
\end{equation}

Now we introduce an additional notation: suppose that $f,g\in{\mathbb
  D}_c(\bT)$ - the set of continuous densities on the torus $\bT$. Define
\begin{equation}\label{e.muN3}
\mathscr{G}_N(\x;f, g ):=\frac{f(x_{N+1})\G_{s,N}(x_{N+1},x_N)\prod_{k=1}^N
  \G_{k,k-1}(x_k,x_{k-1})g(x_0)}{C_{N,0}(f,g)},
\end{equation}
with $\x=(x_0,\ldots,x_{N+1})$ and the normalization factor 
\begin{equation}
\label{GNM}
\begin{aligned}
C_{N,0}(f,g)&:=\int_{\bT^{N+2}}f(x_{N+1})\G_{s,N}(x_{N+1},x_N)\prod_{k=1}^N
  \G_{k,k-1}(x_k,x_{k-1})  g(x_0)  d\x_{0,N+1}\\
  &=\int_{\bT^2}f(x_{N+1})\G_{s,0}(x_{N+1},x_0)g(x_0)dx_0dx_{N+1}.
  \end{aligned}
\end{equation}
 For each realization of the random environment, one should view $\mathscr{G}_N(\x;f,g)$ as the joint density of the polymer on the cylinder, evaluated at 
\[
(0,x_0),(1,x_1),\ldots,(N,x_N),(s,x_{N+1}),
\] with $x_0$ and $x_{N+1}$ sampled from the densities $g,f$
respectively. For any $\mu,\nu\in \mathcal{M}_1(\bT)$ and $(x_1,\ldots,x_N)\in\bT^N$, we abuse the
notation and    write $\mathscr{G}_N(\x;\mu,\nu)$ as a measure
\begin{equation}\label{e.muN31}
   \mathscr{G}_N(\x;\mu,\nu):=\frac{\mu(dx_{N+1})\G_{s,N}(x_{N+1},x_N)\prod_{k=1}^N
  \G_{k,k-1}(x_k,x_{k-1})\nu(dx_0)}{C_{N,0}(\mu,\nu)}.
\end{equation}
In this case, $C_{N,0}(\mu,\nu)$ equals to
\[
C_{N,0}(\mu,\nu):=\int_{\bT^{N+2}}\G_{s,N}(x_{N+1},x_N)\prod_{k=1}^N
  \G_{k,k-1}(x_k,x_{k-1})  \nu(dx_0) d\x_{1,N}\mu(dx_{N+1}).
  \]

 Recall that the winding number $\W(s;\mu,\nu)$ has the representation given by \eqref{e.wsym}.  With the  Markov chain $\{Y_j\}_{j=1}^N$ and the above notation, one can express $\cZ_{s,0}(y,y')$ in terms of $ Y_{N+1}$ and $\mathscr{G}_N$. This leads to 
 \begin{lemma}
  We have \begin{equation}\label{e.windingnew}
  \cW(s;\mu,\nu)=-\int_{\bT^{N+2}}\big(x_{N+1}+\E_\x Y_{N+1}-x_0\big)\mathscr{G}_N(\x;\mu,\nu)d\x_{1,N},
  \end{equation}
where $\E_\x$ is the expectation w.r.t. $\Pb_\x$. 
\end{lemma}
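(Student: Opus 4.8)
The plan is to obtain \eqref{e.windingnew} directly by substituting the Markov-chain decomposition \eqref{e.6172} of $\cZ_{s,0}$ into the symmetrized representation \eqref{e.wsym}. Recall that in this subsection $\mu$ is the ending distribution, periodically extended to $\R$, and $\nu$ the starting distribution on $\bT$. First I would use the $1$-periodicity of $\mu$ to split $\R=\bigcup_{j\in\Z}[j,j+1)$ and write the ending point as $y=j_{N+1}+x_{N+1}$ with $j_{N+1}\in\Z$ and $x_{N+1}\in\bT$, so that $\int_{\R}(\cdots)\,\mu(dy)=\sum_{j_{N+1}\in\Z}\int_{\bT}(\cdots)\,\mu(dx_{N+1})$, while the starting point is simply $y'=x_0\in\bT$. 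Under this parametrization $y-y'=j_{N+1}+x_{N+1}-x_0$, and \eqref{e.6172} reads
\[
\cZ_{s,0}(j_{N+1}+x_{N+1},x_0)=\int_{\bT^{N}}\Pb_\x[Y_{N+1}=j_{N+1}]\,\G_{s,N}(x_{N+1},x_N)\prod_{k=1}^N\G_{k,k-1}(x_k,x_{k-1})\,d\x_{1,N}.
\]

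Second, I would plug this into both the numerator and the denominator of \eqref{e.wsym} and carry out the sum over $j_{N+1}$. For the denominator, $\sum_{j_{N+1}\in\Z}\Pb_\x[Y_{N+1}=j_{N+1}]=1$, so after interchanging the (nonnegative) sum with the $\bT^{N}$-integral and the $\mu,\nu$-integrations by Tonelli, the denominator collapses exactly to the normalization $C_{N,0}(\mu,\nu)$ appearing in \eqref{e.muN31}. For the numerator, the extra weight $j_{N+1}+x_{N+1}-x_0$ turns the same sum into $\sum_{j_{N+1}}\Pb_\x[Y_{N+1}=j_{N+1}](j_{N+1}+x_{N+1}-x_0)=\E_\x Y_{N+1}+x_{N+1}-x_0$. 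Dividing numerator by denominator, the minus sign of \eqref{e.wsym} is retained, and recognizing the measure $\mathscr{G}_N(\x;\mu,\nu)$ of \eqref{e.muN31} (a density in $x_1,\dots,x_N$ carrying $\nu(dx_0)$ and $\mu(dx_{N+1})$ at the two ends) immediately yields \eqref{e.windingnew}.

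Since \eqref{e.wsym} and \eqref{e.6172} are already available, there is no genuine analytic difficulty here; the argument is essentially bookkeeping, and the points to watch are organizational: (i) keeping the roles of $\mu$ (ending, on $\R$) and $\nu$ (starting, on $\bT$) consistent with the convention fixed just before \eqref{e.wsym}; (ii) justifying the interchange of $\sum_{j_{N+1}}$ with the integrations, which is legitimate because all the kernels $\G_{k,k-1}$, $\G_{s,N}$ and the transition probabilities are nonnegative and, together with the integrability of $\E_\x|Y_{N+1}|$ needed already for $\cW$ to be well-defined, Tonelli (resp. Fubini for the signed numerator) applies; and (iii) matching the $d\x_{1,N}$ convention, so that the reduction of the denominator to $C_{N,0}(\mu,\nu)$ coincides with the normalization built into $\mathscr{G}_N(\x;\mu,\nu)$.
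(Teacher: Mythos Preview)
Your proposal is correct and follows essentially the same route as the paper: starting from \eqref{e.wsym}, decomposing $y=j_{N+1}+x_{N+1}$, inserting \eqref{e.6172}, and summing over $j_{N+1}$ so that the denominator becomes $C_{N,0}(\mu,\nu)$ and the numerator picks up $\E_\x Y_{N+1}+x_{N+1}-x_0$. The paper's argument is terser and omits the Fubini/Tonelli and convention checks you spell out, but there is no substantive difference.
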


\begin{remark}
On the r.h.s. of \eqref{e.windingnew}, the domain of integration is $\bT^{N+2}$, and this is because we have defined $\mathscr{G}_N(\x;\mu,\nu)$ as in \eqref{e.muN31} which includes $\mu(dx_{N+1})$ and $\nu(dx_0)$.
\end{remark}

\begin{proof}
Recall the expression for $\cW(s;\mu,\nu)$ obtained in \eqref{e.wsym}: 
\[
\cW(s;\mu,\nu)=-\frac{\int_{\R} \left(\int_{\bT} \cZ_{s,0}(y,y')(y-y')\nu(dy')\right) \mu(dy)}{\int_{\R} \left(\int_{\bT} \cZ_{s,0}(y,y')\nu(dy')\right) \mu(dy)}.
\]
For the numerator, using \eqref{e.6172} and \eqref{e.muN31}, we rewrite it as (with $y=j_{N+1}+x_{N+1}, y'=x_0$)
\[
\begin{aligned}
&\int_{\R} \left(\int_{\bT} \cZ_{s,0}(y,y')(y-y')\nu(dy')\right) \mu(dy)\\
&=C_{N,0}(\mu,\nu)\sum_{j_{N+1}\in\Z} \int_{\bT^2}\Pb_\x[Y_{N+1}=j_{N+1}] (j_{N+1}+x_{N+1}-x_0)\mathscr{G}_N(\x;\mu,\nu)dx_{1,N}\\
&=C_{N,0}(\mu,\nu)\int_{\bT^2}(\E_\x[Y_{N+1}]+x_{N+1}-x_0)\mathscr{G}_N(\x;\mu,\nu)dx_{1,N}.
\end{aligned}
\]
Similarly, it is easy to check that the denominator equals to $C_{N,0}(\mu,\nu)$. The proof is complete.
\end{proof}

\subsection{Exponential mixing} In this section, we start from the expression in \eqref{e.windingnew} and make use of the exponential mixing of the endpoint distribution of directed polymer on the cylinder to prove \eqref{e.Wsta}.

First, we need the following moment estimate
\begin{lemma}\label{l.mmestimate}
For any $p\in[1,\infty)$, we have 
\begin{equation}\label{e.641}
\sup_{s\in(0,2]}\sup_{x,y\in\bT} \EE \sum_{j\in\Z} |j|^p \frac{\cZ_{s,0}(x+j,y)}{\G_{s,0}(x,y)}\leq C_p.
\end{equation}
As a result, we conclude that
\begin{equation}\label{e.642}
\sup_{N\geq1} \sup_{\x}\max_{k=1,\ldots,N+1} \EE \E_\x |\eta_k|^p \leq C_p.
\end{equation}
\end{lemma}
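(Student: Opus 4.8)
The plan is to prove the moment bound \eqref{e.641} first, and then derive \eqref{e.642} from it by unwinding the definition \eqref{e.laweta} of the increments $\eta_k$. For \eqref{e.641}, the key observation is that $\cZ_{s,0}(x+j,y)/\G_{s,0}(x,y)$ is a probability weight in $j$ (since summing over $j$ gives $\G_{s,0}(x,y)/\G_{s,0}(x,y)=1$), so what we want is a uniform bound on the $p$-th moment of a directed polymer increment over a unit time interval. I would represent $\cZ_{s,0}$ through the Feynman-Kac formula as in the proof of Lemma~\ref{l.made}: after mollification, $\cZ^\eps_{s,0}(x+j,y)=q_s(x+j-y)\,\E_B[e^{\beta\int_0^s \xi^\eps(\ell, B_\ell)d\ell - \frac12\beta^2 R^\eps(0)s}\mid B_0=y, B_s=x+j]$, and similarly $\G^\eps_{s,0}(x,y)=\sum_{m}\cZ^\eps_{s,0}(x+m,y)$. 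Passing to the limit $\eps\to0$ recovers the desired quantity, so it suffices to get bounds uniform in $\eps$.

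The main step is then to control the ratio. Write $\sum_j |j|^p \cZ_{s,0}(x+j,y) = \sum_j |j|^p q_s(x+j-y) M_{s}(x+j,y)$ where $M_s(x+j,y)$ is the normalized Feynman-Kac exponential (a positive random variable with $\EE M_s = 1$). A clean route is a Cauchy-Schwarz / Hölder split: bound $\EE[\sum_j |j|^p \cZ_{s,0}(x+j,y) / \G_{s,0}(x,y)]$ by first conditioning so that the denominator is handled, or more robustly by using the negative-moment bounds on the partition function. The cleanest version: since $\G_{s,0}(x,y) \ge \cZ_{s,0}(x,y) = q_s(x-y) M_s(x,y)$ and $q_s(x-y)$ is bounded below uniformly for $s\in(0,2]$, $x,y\in\bT$, we get
\[
\sum_{j\in\Z} |j|^p \frac{\cZ_{s,0}(x+j,y)}{\G_{s,0}(x,y)} \le C \sum_{j\in\Z} |j|^p q_s(x+j-y) \frac{M_s(x+j,y)}{M_s(x,y)}.
\]
Taking expectation and applying Cauchy-Schwarz, one needs $\EE[M_s(x+j,y)^2]$ bounded uniformly (a standard second-moment bound for the SHE, Gaussian in $j$ at worst — indeed $\EE[\cZ_{s,0}(a,y)\cZ_{s,0}(a',y)]$ is explicitly computable and decays in $|a-a'|$) together with $\EE[M_s(x,y)^{-2}]$ bounded uniformly (negative moments of the partition function on a unit interval, which hold by e.g. the results used in \cite{YGTK22} or by a direct Jensen/Gaussian-correlation argument). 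The Gaussian factor $q_s(x+j-y)$ decays like $e^{-cj^2}$ and kills the polynomial $|j|^p$ and any polynomially-growing second-moment factor, giving the uniform bound $C_p$. I would also note the $n\mapsto -n$ symmetry \eqref{e.defW} is not needed here but the periodicity $\cZ_{s,0}(x+j,y)=\cZ_{s,0}(x,y-j)$ can be used to symmetrize if convenient.

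For \eqref{e.642}: by \eqref{e.laweta}, for $k\le N$, $\Pb_\x[\eta_k = j] = \cZ_{k,k-1}(x_k+j,x_{k-1})/\G_{k,k-1}(x_k,x_{k-1})$, which by the Markov property of the SHE (time-homogeneity of increments in law) has exactly the form bounded in \eqref{e.641} with $s=1$, $x=x_k$, $y=x_{k-1}$. Hence $\EE\E_\x|\eta_k|^p \le C_p$ uniformly in $N$, $\x$, $k\le N$. For the last increment $\eta_{N+1}$, the relevant "time length" is $s-N \in [0,1)$, but $s=\lf s\rf + (s-N)$ could put $s-N$ arbitrarily close to $0$; to stay in a controlled regime I would instead absorb the last two steps, i.e. set $N=\lf s\rf -1$ when $\lf s \rf \ge 1$ so that $s - N \in [1,2)$, ensuring the relevant time interval length lies in $(0,2]$, and then \eqref{e.641} applies directly to $\eta_{N+1}$ as well. (Alternatively, keep $N=\lf s\rf$ but handle small $s-N$ by noting $\cZ_{s,N}(x+j,y)/\G_{s,N}(x,y) \to \mathbbm{1}_{j=0}$ as $s\to N$, so the moment is continuous and bounded.) The main obstacle is the negative-moment bound on the unit-interval partition function $M_s(x,y)^{-1}$ uniformly in $s\in(0,2]$ and $x,y\in\bT$ — this degenerates as $s\to0$ (where $\cZ_{s,0}(x,y)$ concentrates like a heat kernel and $M_s$ becomes singular), so one must be slightly careful to either bound $s$ away from $0$ via the $N=\lf s\rf-1$ choice, or exploit that for small $s$ the chain increments are tiny. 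Everything else is routine: Gaussian tail bounds on $q_s$ and standard $L^2$ estimates for the mollified SHE that pass to the $\eps\to0$ limit.
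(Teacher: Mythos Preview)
Your strategy is essentially the paper's: Cauchy--Schwarz splitting into positive moments of $\cZ$ and negative moments of the denominator, then summing against the Gaussian heat kernel. The gap is in your pathwise lower bound on the denominator. You write $\G_{s,0}(x,y)\ge\cZ_{s,0}(x,y)=q_s(x-y)M_s(x,y)$ and then assert that $q_s(x-y)$ is bounded below uniformly for $s\in(0,2]$, $x,y\in\bT$. This is false: for small $s$ and $x,y$ at positive Euclidean distance (e.g.\ $x=0$, $y=1/2$), $q_s(x-y)=\tfrac{1}{\sqrt{2\pi s}}e^{-1/(8s)}\to0$, so your displayed inequality acquires a factor $q_s(x-y)^{-1}$ that blows up. Your later diagnosis that the obstacle is that ``$M_s$ becomes singular'' as $s\to0$ is also off: in fact $\EE[M_s^{\pm p}]\le C$ uniformly on $(0,2]$, since $\EE[\cZ_{s,0}(x,y)^{\pm p}]\le Cq_s(x-y)^{\pm p}$; the trouble is purely the deterministic prefactor.

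The fix (and what the paper does) is to skip the pathwise bound $\G\ge\cZ$ and apply Cauchy--Schwarz directly to $\cZ_{s,0}(x+j,y)/\G_{s,0}(x,y)$, using the negative moment estimate on the \emph{periodic} propagator, $\EE[\G_{s,0}(x,y)^{-p}]\le CG_s(x-y)^{-p}$. This leaves $\sum_j|j|^p\,q_s(x+j-y)/G_s(x-y)$, which is uniformly bounded over $s\in(0,2]$: since $G_s(x-y)=\sum_m q_s(x-y+m)$, the ratio $q_s(x+j-y)/G_s(x-y)$ is at most $1$ for every $j$, the dominant contribution sits at $|j|\le1$ because $x,y\in[0,1)$, and the tail decays like $e^{-cj^2/s}$. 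With this correction \eqref{e.641} holds for all $s\in(0,2]$, and your reduction of \eqref{e.642} to \eqref{e.641} via \eqref{e.laweta} goes through as written, with no need to redefine $N$.
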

Similar estimates can be found in \cite[Lemma 3.2, Lemma 4.1]{YGTK22}. The proof here covers more general cases.

\begin{proof}
First we state the following estimate, which can be proved using
rather standard techniques (e.g. see \cite[Theorem 2.6]{HK22}, the
proof in case of spatially periodic noise can be repeated verbatim): there exists $C>0$, only depending on $p\geq1$, such that for $s\in(0,2]$, $x,y\in\bT$ and $j\in\Z$, we have
\begin{equation}\label{e.positivemm}
 \EE \cZ_{s,0}(x+j,y)^p \leq Cq_s(x+j-y)^p.
\end{equation}
\blue{From \eqref{e.positivemm} and the triangle inequality, we have (with $\|\cdot\|_p$ denoting the $L^p(\Omega)$ norm)
\begin{equation}\label{e.positivemm1}
\|\G_{s,0}(x,y)\|_p=\|\sum_{j\in\Z} \cZ_{s,0}(x+j,y)\|_p \leq C \sum_{j\in \Z} q_s(x+j-y)=CG_s(x-y)
\end{equation}}
We also have the negative moment estimate by \cite[Corollary 4.8]{HK22}\footnote{The result in \cite{HK22} is for the equation on the whole space, but the proof applies verbatim to the torus case.}:
for any $p\in[1,\infty)$ there exists $C>0$ such that 
\begin{equation}\label{e.negativemm}
  \begin{aligned}
   & \EE\G_{s,0}(x,y)^{-p} \le 
    CG_s(x-y)^{-p},\quad x,y\in\bT,\,s\in(0,2].
    \end{aligned}
  \end{equation}


Applying the Cauchy-Schwarz inequality and the estimate
\eqref{e.positivemm} together with \eqref{e.negativemm}, we derive 
\[
\EE \sum_{j\in\Z} |j|^p \frac{\cZ_{s,0}(x+j,y)}{\G_{s,0}(x,y)} \leq C\sum_{j\in\Z} |j|^p\frac{q_s(x+j-y)}{G_s(x-y)}.
\]
Now we write
\[
\sum_{j\in\Z} |j|^p\frac{q_s(x+j-y)}{G_s(x-y)} \leq C+\sum_{|j|>100}|j|^p \frac{q_s(x+j-y)}{q_s(x-y)}
\]
and note that the r.h.s. is bounded, uniformly in $s\in(0,2]$ and $x,y\in\bT$. This completes the proof of \eqref{e.641}. To see why \eqref{e.642} holds, we only need to recall the definition of $\eta_k$ in \eqref{e.laweta} and apply \eqref{e.641}. 
\end{proof}

Next, we need the following lemma, which, roughly speaking, states that the dependence on $\eta_k$ on the two distributions $\mu$ and $\nu$ is exponentially small if $k\gg1$ and $N-k\gg1$ (recall that $N=\lf s\rf$).

\begin{lemma}\label{l.mixeta}
For any $p\in[1,\infty)$, there exist $C,\lambda>0$ such that 
\begin{equation}\label{e.mixeta}
\begin{aligned}
&\EE \left|\int_{\bT^{N+2}} \E_\x \eta_k [\mathscr{G}_N(\x;\mu_1,\nu_1)- \mathscr{G}_N(\x;\mu_1,\nu_2)]dx_{1,N}\right|^p \leq Ce^{-\lambda k},\\
&\EE \left|\int_{\bT^{N+2}} \E_\x \eta_k [\mathscr{G}_N(\x;\mu_1,\nu_1) -\mathscr{G}_N(\x;\mu_2,\nu_1)]dx_{1,N}\right|^p \leq C  e^{-\lambda(N-k)},
\end{aligned}
\end{equation}
for all $\mu_i,\nu_i\in\mathcal{M}_1(\bT)$, with $i=1,2$, and $k=1,\ldots,N+1$.
\end{lemma}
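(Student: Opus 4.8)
Here is my plan for proving Lemma~\ref{l.mixeta}.

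\medskip

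\textbf{Strategy.} The proof hinges on the exponential mixing of the directed polymer endpoint density on the torus. The key structural observation is that, because $\mathscr{G}_N$ has the product form \eqref{e.muN31}, one can factor out the dependence on the ending distribution $\nu$ (via $x_0$) and on the starting distribution $\mu$ (via $x_{N+1}$), and the integrand $\E_\x\eta_k$ depends on $\x$ only through $x_{k-1},x_k$ (see \eqref{e.laweta}). So the two estimates in \eqref{e.mixeta} are really statements about how $x_{k-1},x_k$ decorrelate from $x_0$ (first estimate) or from $x_{N+1}$ (second estimate) under the polymer Gibbs measure. I will prove only the first estimate; the second follows by the time-reversal symmetry $\cZ_{s,0}(y,y')=\cZ_{s,0}(y',y)$ in law (and relabeling), so that the role of $\mu$ and $\nu$ and the roles of the endpoints $0$ and $s$ swap.

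\medskip

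\textbf{Step 1: reduce to a difference of conditional one-step kernels.} Integrating out $x_1,\ldots,x_{k-2},x_{k+1},\ldots,x_N,x_{N+1}$ in $\mathscr{G}_N(\x;\mu_1,\nu_i)$, the marginal density of $(x_{k-1},x_k)$ is
\[
m_N^{(i)}(x_{k-1},x_k)=\frac{\big(\int_{\bT}\G_{k-1,0}(x_{k-1},x_0)\nu_i(dx_0)\big)\,\G_{k,k-1}(x_k,x_{k-1})\,\G_{s,k}(-,x_k)}{\int_{\bT^2}\big(\int_\bT\G_{k-1,0}(x_{k-1},x_0)\nu_i(dx_0)\big)\G_{k,k-1}(x_k,x_{k-1})\,\G_{s,k}(-,x_k)\,dx_{k-1}dx_k}.
\]
Writing $h_i(x_{k-1})=\int_\bT\G_{k-1,0}(x_{k-1},x_0)\nu_i(dx_0)$ and normalizing it to a probability density $\bar h_i$ on $\bT$, the quantity to be bounded is $\big|\E_{\bar h_1}[F]-\E_{\bar h_2}[F]\big|$, where $F(x_{k-1})=\int_\bT\E_\x\eta_k\cdot K(x_k\,|\,x_{k-1})dx_k$ with $K$ the appropriately normalized forward kernel, which by Lemma~\ref{l.mmestimate} has $L^p(\Omega)$-bounded moments uniformly. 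The point is that $\bar h_1$ and $\bar h_2$ are both obtained from the \emph{backward} polymer evolved for time $k-1$ from arbitrary initial data $\nu_1,\nu_2$, hence are exponentially close in $k-1$.

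\medskip

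\textbf{Step 2: exponential mixing of the backward polymer endpoint density.} I need: for any two probability measures $\nu_1,\nu_2$ on $\bT$, the normalized densities $\bar h_1(\cdot),\bar h_2(\cdot)$ defined above satisfy, for some $C,\lambda>0$ and all $p\in[1,\infty)$,
\[
\EE\Big\|\bar h_1-\bar h_2\Big\|_{L^1(\bT)}^p\le Ce^{-\lambda(k-1)},
\]
or more conveniently a bound on $\EE\|h_1/h_2-1\|_{L^\infty}^p$ type quantity (using the positive and negative moment estimates \eqref{e.positivemm1}, \eqref{e.negativemm} to control ratios of $\G$'s on adjacent unit time intervals, together with Harnack-type lower bounds). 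This is exactly the exponential ergodicity of the polymer on the compact torus, which underlies the central limit theorem in \cite{YGTK22}; I expect to invoke or reprove a one-step contraction in Hilbert projective metric (or a Doeblin minorization for the time-inhomogeneous Markov kernel $x_{j-1}\mapsto x_j$ weighted by $\G_{s,j}(-,x_j)$) and iterate it $k-1$ times.

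\medskip

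\textbf{Step 3: combine.} Pair the $L^1$-closeness of $\bar h_1,\bar h_2$ from Step 2 with the uniform $L^p(\Omega)$ bound on $F$ from Step 1 (via Hölder in $\Omega$ with exponents $2p$, $2p$, say, after splitting $\E_{\bar h_1}F-\E_{\bar h_2}F=\int F(\bar h_1-\bar h_2)$). This yields $\EE|\E_{\bar h_1}F-\E_{\bar h_2}F|^p\le Ce^{-\lambda'(k-1)}$, which is the first bound in \eqref{e.mixeta} after absorbing the shift in the constant. The second bound is obtained by the symmetry argument noted above: reversing time turns the dependence on $\mu$ (at the $x_{N+1}$ end) into a dependence that decays like $e^{-\lambda(N-k)}$.

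\medskip

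\textbf{Main obstacle.} The delicate point is Step 2 — making the exponential mixing quantitative and, crucially, \emph{uniform in the random environment} in the $L^p(\Omega)$ sense, since $\G_{k,k-1}$ can be atypically small. This is where the positive and negative moment bounds \eqref{e.positivemm1} and \eqref{e.negativemm} are essential: they let one control, with stretched-exponential or polynomial tails, the random Doeblin constant appearing in each one-step contraction, so that the product over $k-1$ steps still has a deterministic exponential bound in $L^p(\Omega)$. A secondary technical nuisance is the last (non-unit-length) interval $[N,s]$ and the index $k=N+1$, which must be handled as a boundary case but causes no essential difficulty given $s-N\in[0,1)$ and the same moment estimates (with the convention that for $k=N+1$ only the first bound, trivially valid since then $N-k<0$, is at issue, or one interprets $e^{-\lambda(N-k)}$ as $\le e^{\lambda}$).
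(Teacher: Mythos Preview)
Your approach is essentially the same as the paper's: reduce to the marginal on $(x_{k-1},x_k)$ using that $\E_\x\eta_k$ depends only on these two variables, invoke the exponential mixing of $\rhof(k-1,\cdot;0,\nu_i)$ (this is stated in the paper as Proposition~\ref{p.tk1}, so there is no need to reprove the Hilbert-metric contraction), and combine with the uniform moment bounds via H\"older. The only refinement the paper adds to your Step~1 is that the full normalization of $m_N^{(i)}$ is not just $\int h_i$, so the difference does not literally take the form $\E_{\bar h_1}[F]-\E_{\bar h_2}[F]$; the paper handles this by splitting into a term coming from $\rhof(k-1,\cdot;0,\nu_1)-\rhof(k-1,\cdot;0,\nu_2)$ and a term coming from the normalizing constants $a_1^{-1}-a_2^{-1}$, each bounded separately---a routine fix of your outline.
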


\begin{proof}
We will only consider the first inequality as the second one is proved in the same way. By Lemma~\ref{l.mmestimate}, we only need to consider the case of $N\gg1$ and $k\gg1$. 

\emph{The case of $1\le k\leq N$.} By definition \eqref{e.laweta}, we know that 
\[
\E_\x  \eta_k=\sum_j j\Pb_\x[\eta_k=j]=\sum_j j\frac{\cZ_{k,k-1}(x_k+j,x_{k-1})}{\G_{k,k-1}(x_k,x_{k-1})}
\]
which only depends on $x_{k-1},x_k$ and the noise in the time interval $[k-1,k]$. Using the forward and backward polymer endpoint densities, defined in \eqref{e.forwardbackward}, we can integrate out all $x-$variables except $x_{k-1},x_k$ and obtain
\begin{equation}\label{e.64etak}
\begin{aligned}
&\int_{\bT^{N+2}} \E_\x\eta_k\,\mathscr{G}_N(\x;\mu_1,\nu_i)dx_{1,N}\\
&=a_i^{-1}\int_{\bT^2} \E_\x\eta_k\,\rhob(s,\mu_1;k,x_k)\G_{k,k-1}(x_k,x_{k-1})\rhof(k-1,x_{k-1};0,\nu_i)dx_{k-1}dx_k,
\end{aligned}
\end{equation}
with \[
a_i:=\int_{\bT^2}\rhob(s,\mu_1;k,x_k)\G_{k,k-1}(x_k,x_{k-1})\rhof(k-1,x_{k-1};0,\nu_i)dx_{k-1}dx_k,\quad
i=1,2.
\]
Then one can write the difference as 
\[
\int_{\bT^{N+2}} \E_\x\eta_k\,[\mathscr{G}_N(\x;\mu_1,\nu_1)-\mathscr{G}_N(\x;\mu_1,\nu_2)]dx_{1,N}=E_1+E_2
\]
with
\[
\begin{aligned}
E_1=a_1^{-1}\int_{\bT^2}& \E_\x\eta_k\,\rhob(s,\mu_1;k,x_k)\G_{k,k-1}(x_k,x_{k-1})\\
&\times[\rhof(k-1,x_{k-1};0,\nu_1)-\rhof(k-1,x_{k-1};0,\nu_2)]dx_{k-1}dx_k,
\end{aligned}
\]
and
\[
\begin{aligned}
E_2=&\int_{\bT^2} \E_\x\eta_k\,\rhob(s,\mu_1;k,x_k)\G_{k,k-1}(x_k,x_{k-1})\rhof(k-1,x_{k-1};0,\nu_2)dx_{k-1}dx_k\\
&\times (a_1^{-1}-a_2^{-1}).
\end{aligned}
\]
It suffices to show that 
\begin{equation}\label{e.err1err2}
\EE |E_1+E_2|^p \leq Ce^{-\lambda k},
\end{equation}
for some $C,\lambda>0$ depending only on $p$.

We collect three facts which will be used in the proof:
\begin{itemize}
  \item[(i)] there exists $C$, depending only on $p\in[1,\infty)$ such that  
\[
\EE \sup_{x,y\in\bT} \G_{k,k-1}(x,y)^p+\EE \sup_{x,y\in\bT}\G_{k,k-1}(x,y)^{-p}
\leq C, 
\]
see \cite[Lemma 4.1]{GK21}. 
\item[(ii)] by Proposition~\ref{p.tk1}, there exists $C$, depending only on $p\in[1,\infty)$, such that  
\[
\EE \sup_{x\in\bT}
|\rhof(k,x;0,\nu_1)-\rhof(k,x;0,\nu_2)|^p \leq
Ce^{-\lambda k}, \quad\quad k\geq 10.
\]
\item[(iii)] there exists $C$, depending only on $p\in[1,\infty)$, such that
\[
\EE \sup_{x\in\bT,\nu\in\mathcal{M}_1(\bT)}
|\rhof(k,x;0,\nu)|^p \leq C, \quad\quad k\geq 10.
\]
\end{itemize}
 Using (i) and   Jensen's inequality, we conclude that for any
 $p\geq1$,
 $$
 \EE \big[a_i^p+a_i^{-p}\big] \leq C,\quad i=1,2.
 $$
Now applying the H\"older inequality and (ii), we have 
\[
\EE |E_1|^p \leq \sqrt{\EE a_1^{-2p}}\sqrt{\EE \int_{\bT^2}\rhob(s,\mu_1;k,x_k) |E_3|^{2p}dx_{k-1}dx_k},
\]
with
$$
E_3=\E_\x\eta_k\,\G_{k,k-1}(x_k,x_{k-1})[\rhof(k-1,x_{k-1};0,\nu_1)-\rhof(k-1,x_{k-1};0,\nu_2)].
$$
Note that $\rhob(s,\mu_1;k,\cdot)$ is independent of $E_3$ and we have the bound
\[
  \begin{aligned}
    & \EE \int_{\bT^2}\rhob(s,\mu_1;k,x_k) |E_3|^{2p}dx_{k-1}dx_k\\
    &
    =
    \int_{\bT^2}\EE\rhob(s,\mu_1;k,x_k) \EE|E_3|^{2p}dx_{k-1}dx_k \leq
    Ce^{-\lambda k},
    \end{aligned}
\]
thanks to (i), (ii) and Lemma \ref{l.mmestimate}.
This implies that  $\EE |E_1|^p\leq Ce^{-\lambda k}$. The analysis of $E_2$ is similar, so we do not repeat it here. Combine them together, we conclude the proof of \eqref{e.err1err2}.

\emph{The case of $k=N+1$.}  In this case we know that 
\[
\E_\x\eta_{N+1}=\sum_j j \frac{\cZ_{s,N}(x_{N+1}+j,x_N)}{\G_{s,N}(x_{N+1},x_N)},
\]
and for $i=1,2$, we have 
\begin{equation}\label{e.64etaN}
\begin{aligned}
&\int_{\bT^{N+2}} \E_\x\eta_{N+1}\,\mathscr{G}_N(\x;\mu_1,\nu_i)dx_{0,N+1}\\
&=a_i^{-1}\int_{\bT^2} \E_\x\eta_{N+1}\,\G_{s,N}(x_{N+1},x_N)\rhof(N,x_{N};0,\nu_i)dx_N\mu_1(dx_{N+1}),
\end{aligned}
\end{equation}
with 
\[
a_i=\int_{\bT^2} \G_{s,N}(x_{N+1},x_N)\rhof(N,x_{N};0,\nu_i)dx_N\mu_1(dx_{N+1}).
\]
Compared to \eqref{e.64etak}, the difference is we do not have the factor $\rho_{\rm b}$ in \eqref{e.64etaN}.
The rest of the proof is the same once we use (iii) and the following estimate (see e.g. \cite[Lemma B.2]{GK21})
\[
\sup_{x_{N+1}\in\bT} \left\{\EE \left|\int_{\bT} \G_{s,N}(x_{N+1},x_N) dx_N\right|^p + \EE  \left|\int_{\bT} \G_{s,N}(x_{N+1},x_N) dx_N\right|^{-p}\right\}\leq C.
\]
The proof is complete.
\end{proof}

A similar proof of the previous lemma leads to 
\begin{lemma}\label{l.mixendpoint}
There exist $C,\lambda>0$ such that 
\begin{equation}\label{e.mixendpoint}
\EE
\left|\int_{\bT^{N+2}}x_{N+1}[\mathscr{G}_N(\x;\mu_1,\nu_1)-\mathscr{G}_N(\x;\mu_1,\nu_2)]d\x_{1,N}\right|^p
\leq Ce^{-\lambda N},
\end{equation}
\begin{equation}\label{e.mixendpoint1}
\EE
\left|\int_{\bT^{N+2}}x_0[\mathscr{G}_N(\x;\mu_1,\nu_1)-\mathscr{G}_N(\x;\mu_2,\nu_1)]d\x_{1,N}\right|^p
\leq Ce^{-\lambda N}  ,
\end{equation}
 for any $\mu_i,\nu_i\in\mathcal{M}_1(\bT)$, $i=1,2$.
\end{lemma}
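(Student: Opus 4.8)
The plan is to mimic the proof of Lemma~\ref{l.mixeta} almost verbatim, replacing the factor $\E_\x\eta_k$ by the boundary fractional coordinates $x_{N+1}$ or $x_0$. I would first treat \eqref{e.mixendpoint}: since $x_{N+1}\in[0,1)$ is bounded and depends only on the last coordinate, I integrate out all variables $x_1,\dots,x_N$ and $x_0$ using the forward/backward endpoint densities. Concretely, as in \eqref{e.64etak}, one writes
\[
\int_{\bT^{N+2}}x_{N+1}\,\mathscr{G}_N(\x;\mu_1,\nu_i)d\x_{1,N}
=a_i^{-1}\int_{\bT}x_{N+1}\,\Big(\int_{\bT}\G_{s,0}(x_{N+1},x_0)\rhof(0,\text{--};0,\nu_i)\cdots\Big),
\]
more precisely reducing everything to an expression involving only $\rhof(N,x_N;0,\nu_i)$, the propagator $\G_{s,N}(x_{N+1},x_N)$, and $\mu_1(dx_{N+1})$, with normalizer $a_i$. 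Then I split the difference into two pieces $E_1+E_2$ exactly as before: $E_1$ carries the difference $\rhof(N,\cdot;0,\nu_1)-\rhof(N,\cdot;0,\nu_2)$ and $E_2$ carries the difference $a_1^{-1}-a_2^{-1}$ of normalizers. Both are controlled by fact (ii) of Lemma~\ref{l.mixeta} (exponential stability of the forward endpoint density under the initial condition, i.e.\ Proposition~\ref{p.tk1}), together with facts (i) and (iii) and the moment bounds on $\int_{\bT}\G_{s,N}(x_{N+1},x_N)dx_N$ from \cite[Lemma B.2]{GK21}. Since $N=\lf s\rf$ and the distance from the relevant time slice (near time $0$, where $\nu$ is prescribed) to time $N$ is $\asymp N$, the decay rate is $e^{-\lambda N}$ rather than $e^{-\lambda k}$.

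The statement \eqref{e.mixendpoint1} is the mirror image: $x_0$ depends only on the first coordinate, so after integrating out $x_1,\dots,x_{N+1}$ one is left with an expression involving $\rhob(s,\mu_i;1,x_1)$ (or $\rhob$ at time close to $s$), the propagator $\G_{1,0}(x_1,x_0)$, and $\nu_1(dx_0)$, with a normalizer depending on $\mu_i$. The difference in $\mu$ now propagates through the backward endpoint density $\rhob(s,\mu_1;1,\cdot)-\rhob(s,\mu_2;1,\cdot)$, and the distance from time $s$ down to time $1$ is again $\asymp N$, giving the $e^{-\lambda N}$ bound. One needs the backward analogue of fact (ii); by the time-reversal symmetry \eqref{010905-23} of the propagator (or directly by Proposition~\ref{p.tk1} applied to the backward polymer), the backward endpoint density enjoys the same exponential stability, so no new input is required.

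The only genuinely new bookkeeping compared to Lemma~\ref{l.mixeta} is that here the ``observable'' $x_{N+1}$ (resp.\ $x_0$) lives exactly on the boundary slice where one of the measures is prescribed, so there is no ``bulk'' localization of the observable and the decay is driven purely by the distance $N$ to the \emph{other} boundary; this is why both inequalities carry $e^{-\lambda N}$ rather than a rate depending on an interior index. Since $x_{N+1},x_0\in[0,1)$ are uniformly bounded, the moment estimates of Lemma~\ref{l.mmestimate} are not even needed for the observable itself — they are replaced by the trivial bound $\|x_{N+1}\|_\infty\le1$ — which actually simplifies the argument. I expect no real obstacle: the main point is simply to carry out the same $E_1+E_2$ decomposition carefully and invoke the already-established stability and moment bounds; the only mild care needed is to make sure the time slice at which one applies the $\rhof$/$\rhob$ stability estimate is at distance $\gtrsim N$ (e.g.\ at time $\lf N/2\rf$ if one wants a clean split, or directly at time $N$ for \eqref{e.mixendpoint}), so that the resulting exponent is $\lambda N$ and not something smaller.
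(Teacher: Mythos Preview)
Your proposal is correct and follows exactly the approach the paper intends: the paper simply states that ``a similar proof of the previous lemma leads to'' Lemma~\ref{l.mixendpoint}, and your plan to rerun the $E_1+E_2$ decomposition with the bounded observable $x_{N+1}$ (resp.\ $x_0$) in place of $\E_\x\eta_k$, invoking Proposition~\ref{p.tk1} for the forward (resp.\ backward) endpoint stability at distance $\asymp N$, is precisely that similar proof. Your observation that the trivial bound $|x_{N+1}|,|x_0|\le 1$ replaces the moment estimate of Lemma~\ref{l.mmestimate} is also correct and indeed simplifies matters.
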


Using \eqref{e.defeta}, \eqref{e.windingnew} and  Lemma \ref{l.mixeta},
we obtain 
\begin{corollary}
  \label{cor011205-23}
    For any $p\in[1,\infty)$, there exists $C>0$ such that for all $s\geq0$,  
    \begin{equation}
      \label{011205-23}
\blue{\sup_{\mu_1,\mu_2,\nu_1,\nu_2\in{\mathcal M}_1(\bT)}\EE\big|\cW(s;\mu_1,\nu_1)-\cW(s,\mu_2,\nu_2)\big|^p\leq C.}
\end{equation}
  \end{corollary}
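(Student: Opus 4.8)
The plan is to derive the uniform bound \eqref{011205-23} directly from the Markov-chain representation \eqref{e.windingnew} together with the moment estimate of Lemma~\ref{l.mmestimate} and the mixing estimate of Lemma~\ref{l.mixeta}. Recall that for any $\mu,\nu\in\mathcal{M}_1(\bT)$ we have
\[
\cW(s;\mu,\nu)=-\int_{\bT^{N+2}}\big(x_{N+1}+\E_\x Y_{N+1}-x_0\big)\mathscr{G}_N(\x;\mu,\nu)d\x_{1,N},
\]
with $N=\lf s\rf$ and $Y_{N+1}=\sum_{k=1}^{N+1}\eta_k$. Since $x_{N+1},x_0\in[0,1)$ contribute terms bounded by $1$ in absolute value, the only issue is the term involving $\E_\x Y_{N+1}$. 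Writing $\E_\x Y_{N+1}=\sum_{k=1}^{N+1}\E_\x\eta_k$ and integrating against $\mathscr{G}_N(\x;\mu,\nu)$, the difference $\cW(s;\mu_1,\nu_1)-\cW(s;\mu_2,\nu_2)$ telescopes into a sum of two kinds of terms: those in which only the ending distribution $\nu$ changes (with $\mu$ fixed), and those in which only the starting distribution $\mu$ changes (with $\nu$ fixed).

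First I would insert an intermediate term, writing
\[
\cW(s;\mu_1,\nu_1)-\cW(s;\mu_2,\nu_2)=[\cW(s;\mu_1,\nu_1)-\cW(s;\mu_1,\nu_2)]+[\cW(s;\mu_1,\nu_2)-\cW(s;\mu_2,\nu_2)].
\]
For the first bracket, the $x_0,x_{N+1}$ contributions are handled by \eqref{e.mixendpoint} of Lemma~\ref{l.mixendpoint} (bounded, in fact exponentially small), while the $\E_\x\eta_k$ contributions split as
\[
\sum_{k=1}^{N+1}\int_{\bT^{N+2}}\E_\x\eta_k\,[\mathscr{G}_N(\x;\mu_1,\nu_1)-\mathscr{G}_N(\x;\mu_1,\nu_2)]d\x_{1,N}.
\]
By the first inequality in \eqref{e.mixeta}, the $L^p(\Omega)$ norm of the $k$-th term is at most $Ce^{-\lambda k}$, so the sum over $k$ is bounded by $C\sum_{k\ge1}e^{-\lambda k}<\infty$, uniformly in $N$ (hence in $s$) and in the measures. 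The second bracket is handled symmetrically using \eqref{e.mixendpoint1} and the second inequality in \eqref{e.mixeta}, which gives geometric decay $e^{-\lambda(N-k)}$ in $N-k$, again summable uniformly. Taking $L^p(\Omega)$ norms, using the triangle inequality and Minkowski's inequality throughout, yields \eqref{011205-23}.

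The only mild subtlety — and the place I would be careful — is bookkeeping at the endpoint index $k=N+1$ and the small values of $N$: for $s\le 2$ (i.e. $N\le 2$) the chain is too short for the mixing lemmas to apply, but there the bound follows directly from the moment estimate \eqref{e.642} of Lemma~\ref{l.mmestimate}, which controls $\EE\E_\x|\eta_k|^p$ uniformly; and for $k=N+1$ one uses the variant of Lemma~\ref{l.mixeta} proved for that index (the case $k=N+1$ treated explicitly there). Beyond this routine case-splitting, the argument is essentially a geometric-series summation of the per-block mixing estimates, so I do not anticipate a genuine obstacle; the real work has already been done in Lemmas~\ref{l.mmestimate}, \ref{l.mixeta} and \ref{l.mixendpoint}.
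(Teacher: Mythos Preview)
Your proposal is correct and follows essentially the same approach as the paper, which simply notes that the corollary follows from \eqref{e.defeta}, \eqref{e.windingnew} and Lemma~\ref{l.mixeta}. You have spelled out in more detail the telescoping via the intermediate term $\cW(s;\mu_1,\nu_2)$, the geometric summation of the per-block estimates from Lemma~\ref{l.mixeta}, and the routine endpoint bookkeeping, but the substance is the same.
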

  
  \begin{remark}
  Recall that $\psi(s,y)=\cW(s;\delta_y,\delta_0)$ and $\phi(s,y)=\cW(s;\delta_y, \rho_{W})$. It is straightforward  to check that another consequence of Lemmas~\ref{l.mixeta} and \ref{l.mixendpoint} is the convergence in distribution of $\psi(s,y')-\psi(s,y)$ and $\phi(s,y')-\phi(s,y)$ as $s\to\infty$, for any $y',y\in\bT$. As a matter of fact, there is a one-force-one-solution principle: suppose we abuse the notation and let $\psi(s,y)$ (or $\phi(s,y)$) be the winding number of the polymer path in the time interval $[-K,s]$, then $\psi(s,y')-\psi(s,y)$ (or $\phi(s,y')-\phi(s,y)$) converges in $L^p(\Omega)$ as $K\to\infty$.
  \end{remark}
%
%

\subsection{Proof of Proposition~\ref{p.exerr}}
\label{sec3.4}

With Lemmas \ref{l.mixeta} and \ref{l.mixendpoint}, we can complete the proof of \eqref{e.Wsta} and conclude the proof of  Proposition~\ref{p.exerr}.


\begin{proof}[Proof of \eqref{e.Wsta}]
By Lemma~\ref{l.mmestimate}, it suffices to consider those $N\gg1$.
By \eqref{e.windingnew} and \eqref{e.defeta}, we can first bound the difference as  
\[
\begin{aligned}
&\big|[\cW(s;\mu_1,\nu_1)-\cW(s,\mu_2,\nu_1)]-[\cW(s;\mu_1,\nu_2)-\cW(s;\mu_2,\nu_2)]\big|\\
&\leq E_1+E_2+E_3,
\end{aligned}
\]
with 
\[
\begin{aligned}
E_1&=\sum_{i=1}^2\left|\int_{\bT^{N+2}}x_{N+1}[\mathscr{G}_N(\x;\mu_i,\nu_1)-\mathscr{G}_N(\x;\mu_i,\nu_2)]d\x_{1,N}\right|,\\
E_2&=\sum_{i=1}^2  \left|\int_{\bT^{N+2}}x_0[\mathscr{G}_N(\x;\mu_1,\nu_i)-\mathscr{G}_N(\x;\mu_2,\nu_i)]d\x_{1,N}\right|,\\
E_3&=\sum_{i=1}^2\left|\sum_{N+1\ge k\geq N/2}\int_{\bT^{N+2}} \E_\x \eta_k [\mathscr{G}_N(\x;\mu_i,\nu_1)- \mathscr{G}_N(\x;\mu_i,\nu_2)]dx_{1,N}\right|,\\
&+\sum_{i=1}^2\left|\sum_{1\le k<N/2}\int_{\bT^{N+2}} \E_\x \eta_k [\mathscr{G}_N(\x;\mu_1,\nu_i)- \mathscr{G}_N(\x;\mu_2,\nu_i)]dx_{1,N}\right|.
\end{aligned}
\]
Then it suffices to apply the estimates in Lemmas~\ref{l.mixeta} and \ref{l.mixendpoint} to complete the proof.
\end{proof}

\section{Passive scalar in Burgers flow and time reversal}
\label{s.scalar}

Recall that in Section~\ref{s.co}, we have used the Clark-Ocone formula to write the \blue{quantity of interest}, the quenched mean $X_t$, as an It\^o integral 
\begin{equation}\label{e.co651}
X_t=\int_{\R} x\rho(t,x)dx=\beta \int_0^t\int_{\bT} \EE[I_{1,t}(s,y)-I_{2,t}(s,y)|\F_s] \xi(s,y)dyds,
\end{equation}
with 
\begin{equation}\label{e.co652}
I_{1,t}(s,y)-I_{2,t}(s,y)=\rho_{\rm m}(t,-|s,y)\int_{\bT}[\psi(s,y')-\psi(s,y)]\rho_{\rm m}(t,-|s,y')dy'.
\end{equation}
In Section~\ref{s.winding}, we have derived the estimate in
Proposition~\ref{p.exerr} which will eventually enable us to replace
$\psi(s,y')-\psi(s,y)$ by its ``stationary'' version
$\phi(s,y')-\phi(s,y)$. Therefore, to have an explicit expression for
the variance of $X_t$, it remains yet to understand the joint distribution of $\rho_{\rm m}(t,-|s,\cdot)$ and $\nabla \phi(s,\cdot)$. It turns out to be the major challenge and is the goal of this section.  

\blue{The rest of this section is divided into three parts. In Section~\ref{s.windingcorrector}, we use the Girsanov transformation to reduce the study of the directed polymer to a diffusion in a \emph{non-Markovian} drift. By a standard homogenization argument, the quenched mean of the polymer endpoint turns out to be the corrector from a formal two-scale expansion. In Section~\ref{s.fkreversal}, we further study the gradient of the corrector and draw the connection to another diffusion with a \emph{Markovian} drift. Using the time-reversal anti-symmetry between the aforementioned two drifts, we form a link between the Gibbsian and the Markovian settings, which eventually enable us to derive the joint law of the gradient of the corrector and the drift, see Proposition~\ref{p.keyP} below. This may be compared to the study of geodesics and competing interfaces in the context of last passage percolation \cite{Sep20}. On the technical side, all these will be done on the level of approximations, since the drifts are distribution valued. In Section~\ref{s.singulardiffusion} we borrow the tools from singular diffusion to pass to the limit.}


\subsection{Winding number, diffusion in random environment, corrector} 
\label{s.windingcorrector}

 The goal of this section is to utilize the well-known connection
 between the directed polymer and the passive scalar in a random
 drift, which solves the stochastic Burgers equation, and to  rewrite
 the difference $\phi(s,y')-\phi(s,y)$ in terms of the solution of a
 Fokker-Planck equation with another, different but related,
 drift. One technical difficulty is that, since the random environment
 for the directed polymer is the spacetime white noise, the random
 drift for the passive scalar is distribution valued. \blue{For this
   reason we start with an approximation.}

Recall that $\phi$ was defined in \eqref{e.defphi}. For any $t>0, x\in\bT$, we define the $\eps-$approximation of $\phi$ as
\begin{equation}\label{e.defphieps}
\begin{aligned}
\phi^\eps(t,x)&=\frac{\int_{\R} \cZ^\eps_{t,0}(x,x')(x'-x)e^{\beta W(x')}dx'}{\int_{\R} \cZ^\eps_{t,0}(x,x')e^{\beta W(x')}dx'}\\
&=\frac{\E_B[e^{\beta\int_0^t \xi^\eps(t-\ell,B_\ell)d\ell-\frac12\beta^2 R^\eps(0)t}e^{\beta W(B_t)}(B_t-x)|B_0=x]}{\E_B[e^{\beta\int_0^t \xi^\eps(t-\ell,B_\ell)d\ell-\frac12\beta^2 R^\eps(0)t}e^{\beta W(B_t)}|B_0=x]}.
\end{aligned}
\end{equation}
For any $s<t, x,y\in \bT$, define the midpoint density of the ``stationary'' polymer at $(s,y)$ by 
\begin{equation}\label{e.defrhoeps}
\rho^\eps_{\rm m}(t,x|s,y)=\frac{\E_B[e^{\beta\int_0^t \xi^\eps(t-\ell,B_\ell)d\ell-\frac12\beta^2 R^\eps(0)t}e^{\beta W(B_t)}\delta(\dot{B}_{t-s}-y)|B_0=x]}{\E_B[e^{\beta\int_0^t \xi^\eps(t-\ell,B_\ell)d\ell-\frac12\beta^2 R^\eps(0)t}e^{\beta W(B_t)} |B_0=x]},
\end{equation}
where we recall that $\dot{B}_s$ denotes the fractional part of ${B}_s$. It is clear that 
\[\int_{\bT} \rho^\eps_{\rm m}(t,x|s,y)dy=1.
\]
If we view the polymer path as lying on the cylinder, starting from $(t,x)$ and going backward in time with the terminal potential given by $\beta W$, then $\rho^\eps_{\rm m}(t,x|s,\cdot)$ is precisely the quenched density at time $s$.  Using the propagator of SHE (with mollified noise $\xi^\eps$), we can rewrite $\rho^\eps_{\rm m}$ as 
\begin{equation}\label{e.defrhoepsnew}
\begin{aligned}
\rho^\eps_{\rm m}(t,x|s,y)&=\frac{ \sum_{n\in\Z} \cZ^\eps_{t,s}(x,y+n)\int_{\R}\cZ^\eps_{s,0}(y+n,x')e^{\beta W(x')}dx'}{\int_{\R} \cZ^\eps_{t,0}(x,x')e^{\beta W(x')}dx'}\\
&=   \frac{ \G^\eps_{t,s}(x,y)\int_{\bT}\G^\eps_{s,0}(y,x')e^{\beta W(x')}dx'}{\int_{\bT} \G^\eps_{t,0}(x,x')e^{\beta W(x')}dx'}.
\end{aligned}
\end{equation}

\begin{remark}
At this point, it is worth pointing out the difference between the two midpoint densities, $\rho_{\rm m}(t,-|s,y)$ and $\rho_{\rm m}^\eps(t,x|s,y)$, defined in \eqref{e.defrho} and \eqref{e.defrhoeps} respectively -- one should not confuse them with each other. Apart from the different noise $\xi$ and $\xi^\eps$, the prescribed distribution of two endpoints are different: for $\rho_{\rm m}(t,-|s,y)$, the endpoint at $t$ is distributed according to Lebesgue measure on $\bT$ and the endpoint at $0$ is fixed at $0$; for $\rho_{\rm m}^\eps(t,x|s,y)$, the endpoint at $t$ is fixed at $x$ and the endpoint at $0$ is distributed according to the density $\rho_W(x)=e^{\beta W(x)}/\int_{\bT} e^{\beta W(x')}dx'$.
\end{remark}

Define $Z^\eps_W(t,x):=\int_{\R} \cZ^\eps_{t,0}(x,y)e^{\beta W(y)}dy$, which solves
\begin{equation}\label{e.Zeps}
\partial_tZ^\eps_W=\frac12\Delta Z^\eps_W+\beta Z^\eps_W\xi^\eps, \quad\quad Z^\eps_W(0,x)=e^{\beta W(x)},
\end{equation} and define 
\begin{equation}\label{e.hueps}
h^\eps=\log Z^\eps_W, \quad\quad u^\eps=\nabla h^\eps.
\end{equation}
\blue{Here one should keep in mind that both $h^\eps$ and $u^\eps$
  depend on $W$, and we only have kept it implicit to make
  the notation less burdensome.}

From now on, we fix some $t>0$ and $x\in\bT$.  Consider the SDE
\begin{equation}\label{e.rwre}
d\cX_s^\eps=u^\eps(t-s,\cX_s^\eps)ds+dB_s, \quad\quad \cX_0^\eps=x.
\end{equation}
We emphasize that the above is a simplified notation where the dependence on $(t,x)$ is omitted.

\begin{lemma}
  \label{lm012104-23}
  The quenched law of $\{\cX_s^\eps\}_{0\le s\le t}$ coincides with that of
  the path $\{B_s\}_{0\le s\le t}$ of the random polymer under the measure
  $$
  \frac{1}{Z^\eps_W(t,x)}\exp\left\{\beta\int_0^t\xi^\eps(t-s,
                 B_{s})ds-\frac{\beta^2}{2}
                 R^\eps(0)t+\beta W(B_t)\right\}\mathbb W_t(dB),
               $$
               where $\mathbb W_t$ is the Wiener measure on $C[0,t]$ with $B_0=x$.
  \end{lemma}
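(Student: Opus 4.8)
The plan is to prove Lemma~\ref{lm012104-23} by the classical Girsanov / Cameron--Martin argument applied pathwise for a fixed realization of the (mollified) noise $\xi^\eps$. Since $\xi^\eps$ is smooth in space, $u^\eps(t-s,\cdot)=\nabla h^\eps(t-s,\cdot)$ is a genuine bounded smooth function of $s\in[0,t]$ and $x\in\bT$ (for a.e.\ realization), so the SDE \eqref{e.rwre} has a unique strong solution and the exponential martingale below is a true martingale. The first step is to write down the Radon--Nikodym derivative of the law of $\{\cX_s^\eps\}_{0\le s\le t}$ (started at $x$) with respect to the Wiener measure $\mathbb W_t$ on $C[0,t]$ with $B_0=x$: by Girsanov's theorem this is
\[
\frac{d\mathrm{Law}(\cX^\eps)}{d\mathbb W_t}(B)=\exp\left\{\int_0^t u^\eps(t-s,B_s)\,dB_s-\frac12\int_0^t u^\eps(t-s,B_s)^2\,ds\right\}.
\]

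The second step is to remove the It\^o integral in favour of a pathwise expression using the Burgers/Hopf--Cole structure. Set $h^\eps=\log Z^\eps_W$ as in \eqref{e.hueps}, so $u^\eps=\nabla h^\eps$, and recall from \eqref{e.Zeps} that $Z^\eps_W$ solves the SHE with mollified noise, hence $h^\eps$ solves the KPZ equation $\partial_t h^\eps=\tfrac12\Delta h^\eps+\tfrac12(\nabla h^\eps)^2+\beta\xi^\eps-\tfrac12\beta^2R^\eps(0)$. I would apply It\^o's formula to the space-time function $(s,z)\mapsto h^\eps(t-s,z)$ along the Brownian path $B_s$ under $\mathbb W_t$:
\[
h^\eps(0,B_t)-h^\eps(t,B_0)=\int_0^t\nabla h^\eps(t-s,B_s)\,dB_s+\int_0^t\Big(-\partial_t h^\eps+\tfrac12\Delta h^\eps\Big)(t-s,B_s)\,ds.
\]
Substituting the KPZ equation for $-\partial_t h^\eps+\tfrac12\Delta h^\eps=-\tfrac12(\nabla h^\eps)^2-\beta\xi^\eps+\tfrac12\beta^2R^\eps(0)$ and rearranging gives
\[
\int_0^t u^\eps(t-s,B_s)\,dB_s-\frac12\int_0^t u^\eps(t-s,B_s)^2\,ds
= h^\eps(0,B_t)-h^\eps(t,x)+\beta\int_0^t\xi^\eps(t-s,B_s)\,ds-\frac{\beta^2}{2}R^\eps(0)t.
\]
Exponentiating and using $h^\eps(0,z)=\log Z^\eps_W(0,z)=\beta W(z)$ and $e^{-h^\eps(t,x)}=Z^\eps_W(t,x)^{-1}$, the Girsanov density becomes exactly
\[
\frac{1}{Z^\eps_W(t,x)}\exp\left\{\beta\int_0^t\xi^\eps(t-s,B_s)\,ds-\frac{\beta^2}{2}R^\eps(0)t+\beta W(B_t)\right\},
\]
which is precisely the stated polymer weight; since both measures are probability measures this identity of densities gives the claimed identity of laws.

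The step I expect to require the most care is the justification of the It\^o formula / Girsanov manipulations: one needs that for a.e.\ realization of $\xi^\eps$ the function $h^\eps$ (equivalently $Z^\eps_W$) is $C^{1,2}$ in $(s,z)$ on $[0,t]\times\bT$ with $Z^\eps_W$ bounded away from $0$, and that $u^\eps=\nabla h^\eps$ together with $\xi^\eps(t-s,\cdot)$ are bounded enough on $[0,t]$ for the Novikov condition (or a localization argument) to apply. All of this follows from standard parabolic regularity for the linear equation \eqref{e.Zeps} with smooth (in space) noise and from the strict positivity of $Z^\eps_W$ (a consequence of the Feynman--Kac representation already written in \eqref{e.defphieps}); I would cite \cite{BC95} for the Feynman--Kac representation and standard references for the Girsanov theorem, and note that all statements are understood $\PP$-almost surely in $\xi^\eps$, i.e.\ on the level of the quenched law. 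The remaining bookkeeping — matching the conditioning at the endpoints, and the fact that the polymer weight integrates to $Z^\eps_W(t,x)$ against $\mathbb W_t$, so the stated measure is indeed a probability measure — is immediate from the definitions in \eqref{e.defphieps}.
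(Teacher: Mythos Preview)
Your proposal is correct and follows essentially the same approach as the paper: apply It\^o's formula to $h^\eps(t-s,B_s)$, use the KPZ equation satisfied by $h^\eps$ to rewrite the drift, and identify the resulting Girsanov density with the polymer weight. The paper presents the computation in the reverse order (It\^o formula first, then invoke Girsanov) and omits the regularity discussion you include, but the mathematical content is identical.
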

 \proof The result is well-known, and the proof is rather standard, 
 so we only present it here for the convenience of readers. 
Recall that $h^\eps=\log Z_W^\eps$. Consider $Y_s=h^\eps(t-s,B_s)$, with $B$ a standard Brownian motion starting at
$x$. Applying Ito formula, 
 we have 
\[
\begin{aligned}
dY_s&=(-\partial_t h^\eps+\frac12\Delta h^\eps)ds+\nabla h^\eps dB_s\\
&=\left(\nabla h^\eps(t-s,B_s)dB_s-\frac12|\nabla h^\eps(t-s,B_s)|^2ds\right)-\beta \xi^\eps(t-s,B_s)ds+\frac12\beta^2 R^\eps(0)ds,
\end{aligned}
\]
which implies that (recall that $h^\eps(0,x)=\beta W(x)$)
\[
\begin{aligned}
\beta W(B_t)=&h^\eps(t,x)+\int_0^t \left(\nabla h^\eps(t-s,B_s)dB_s-\frac12|\nabla h^\eps(t-s,B_s)|^2ds\right)\\
&-\beta\int_0^t \xi^\eps(t-s,B_s)ds+\frac12\beta^2R^\eps(0)t.
\end{aligned}
\]
So one can write 
\[
\begin{aligned}
&\frac{1}{Z^\eps_W(t,x)}\exp\left\{\beta \int_0^{t} \xi^\eps(t-s,B_s)ds-\frac12\beta^2 R^\eps(0)t+\beta W(B_t)\right\}\\
&=\exp\left\{\int_0^{t} \nabla h^\eps(t-s,B_s)dB_s-\frac12\int_0^{t} |\nabla h^\eps(t-s,B_s)|^2 ds\right\}.
\end{aligned}
\]
It remains to apply the Girsanov theorem to complete the proof.
\qed


Using the previous lemma, we can express the winding number $\phi^\eps$ in terms of the diffusion $\cX^\eps$:
\begin{lemma}\label{l.phiepsPro}
For any $t>0, x\in\bT$, we have 
\begin{equation}\label{011606-23}
\begin{aligned}
\phi^\eps(t,x)=\E_B [\cX_t^\eps]-x&=\int_0^t \E_B u^\eps(t-s,\cX_s^\eps)ds\\
&=\int_0^t\int_{\bT} u^\eps(t-s,y)\rho^\eps_{\rm m}(t,x|t-s,y)dyds.
\end{aligned}
\end{equation}
\end{lemma}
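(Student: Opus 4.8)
The plan is to unwind the definitions and use the Feynman--Kac representation for $\phi^\eps$ together with Lemma~\ref{lm012104-23}. First I would start from the Feynman--Kac formula for $\phi^\eps(t,x)$ in \eqref{e.defphieps}, which already expresses $\phi^\eps(t,x)$ as the ratio
\[
\phi^\eps(t,x)=\frac{\E_B[e^{\beta\int_0^t \xi^\eps(t-\ell,B_\ell)d\ell-\frac12\beta^2 R^\eps(0)t}e^{\beta W(B_t)}(B_t-x)\,|\,B_0=x]}{\E_B[e^{\beta\int_0^t \xi^\eps(t-\ell,B_\ell)d\ell-\frac12\beta^2 R^\eps(0)t}e^{\beta W(B_t)}\,|\,B_0=x]}.
\]
This ratio is precisely the expectation of $B_t-x$ under the tilted path measure described in Lemma~\ref{lm012104-23}; invoking that lemma, the same quantity equals $\E_B[\cX_t^\eps]-x$ (using $\cX_0^\eps=x$), which gives the first equality in \eqref{011606-23}.

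Next I would recover the drift integral. Since $\cX_s^\eps$ solves the SDE \eqref{e.rwre}, integrating gives $\cX_t^\eps-x=\int_0^t u^\eps(t-s,\cX_s^\eps)\,ds + B_t$, and taking $\E_B$ kills the Brownian increment (one should note $u^\eps$ is smooth here, since the noise is mollified, so the stochastic integral is a genuine martingale with zero mean and no integrability subtlety arises). This yields the middle equality $\E_B[\cX_t^\eps]-x=\int_0^t \E_B u^\eps(t-s,\cX_s^\eps)\,ds$.

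For the last equality, I would rewrite $\E_B u^\eps(t-s,\cX_s^\eps)$ as an integral against the time-$s$ marginal law of $\cX_s^\eps$. By Lemma~\ref{lm012104-23} the process $\cX^\eps$ has the law of the polymer path, so the law of the fractional part $\dot{\cX}_s^\eps$ at time $s$ is exactly the midpoint density $\rho^\eps_{\rm m}(t,x\,|\,t-s,\cdot)$ as defined in \eqref{e.defrhoeps} (matching the time label: $\delta(\dot B_{t-s}-y)$ there corresponds to evaluating at path-time $s$ under the time-reversed parametrization). Because $u^\eps(t-s,\cdot)$ is $1$-periodic in space (being the spatial gradient of $h^\eps=\log Z_W^\eps$, and $Z_W^\eps$ is built from the periodic noise with periodic initial data $e^{\beta W}$), one can replace $u^\eps(t-s,\cX_s^\eps)$ by $u^\eps(t-s,\dot{\cX}_s^\eps)$ without changing the value, and hence $\E_B u^\eps(t-s,\cX_s^\eps)=\int_{\bT} u^\eps(t-s,y)\rho^\eps_{\rm m}(t,x\,|\,t-s,y)\,dy$. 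Substituting into the drift integral and using Fubini gives the stated triple-integral expression.

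The only step requiring a little care — and the one I would treat as the main (mild) obstacle — is matching the time indices and the periodization conventions so that the marginal of $\cX_s^\eps$ really is $\rho^\eps_{\rm m}(t,x\,|\,t-s,\cdot)$ as literally written in \eqref{e.defrhoeps}; the representation \eqref{e.defrhoepsnew} via the propagators $\G^\eps$ makes this bookkeeping transparent, and one simply checks that the tilted measure in Lemma~\ref{lm012104-23}, restricted to the event $\{\dot{B}_{t-s}=y\}$, produces exactly the ratio $\G^\eps_{t,s}(x,y)\int_{\bT}\G^\eps_{s,0}(y,x')e^{\beta W(x')}dx'\big/\int_{\bT}\G^\eps_{t,0}(x,x')e^{\beta W(x')}dx'$. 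Everything else is routine: Itô's formula is not even needed here (it was used in Lemma~\ref{lm012104-23}), and all integrability is automatic at the $\eps>0$ level.
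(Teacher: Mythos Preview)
Your proposal is correct and follows essentially the same approach as the paper: use Lemma~\ref{lm012104-23} and the Feynman--Kac form \eqref{e.defphieps} for the first equality, integrate the SDE \eqref{e.rwre} and take $\E_B$ for the second, and identify the quenched torus-density of $\cX_s^\eps$ with $\rho^\eps_{\rm m}(t,x|t-s,\cdot)$ for the third. Your additional remarks on the periodicity of $u^\eps$ and the time-index bookkeeping are exactly the details the paper leaves implicit.
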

\begin{proof}
First, from \eqref{e.defphieps} and Lemma~\ref{lm012104-23}, we know that $\phi^\eps(t,x)=\E_B \cX_t^\eps-x$.  Then, from the SDE \eqref{e.rwre}, we have
\[
\cX_t^\eps=x+\int_0^t u^\eps(t-s,\cX_s^\eps)ds+B_t.
\] 
Taking expectation on $B$ yields the second equality in \eqref{011606-23}.

Thanks to Lemma \ref{lm012104-23}, the quenched density of $\cX_s^\eps$ on the torus is $\rho^\eps_{\rm m}(t,x|t-s,\cdot)$, so the third equality  in \eqref{011606-23} follows,
which completes the proof.
\end{proof}

Define
\begin{equation}\label{e.defg}
g^\eps(t,x)=\nabla(x+\phi^\eps(t,x))=1+\nabla \phi^\eps(t,x),
\end{equation}
which will play a crucial role later. Next we show that   $g^\eps$ solve a Fokker-Planck equation with the random drift given by $u^\eps$:
\begin{lemma}\label{l.geps}
We have: $\phi^\eps$ solves
\begin{equation}\label{e.eqphieps}
\partial_t \phi^\eps=\tfrac12\Delta\phi^\eps+u^\eps\nabla \phi^\eps+u^\eps, \quad\quad \phi^\eps(0,x)=0,
\end{equation}
and $g^\eps$ solves
\begin{equation}\label{e.eqgeps}
\partial_t g^\eps=\tfrac12\Delta g^\eps+\nabla (u^\eps g^\eps), \quad\quad g^\eps(0,x)=1.
\end{equation}
\end{lemma}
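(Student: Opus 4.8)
The plan is to derive \eqref{e.eqphieps} first, by a Hopf--Cole computation, and then obtain \eqref{e.eqgeps} by differentiating in $x$.

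\emph{Set-up.} Introduce $M^\eps(t,x):=\int_\R \cZ^\eps_{t,0}(x,y)\,y\,e^{\beta W(y)}\,dy$, so that by \eqref{e.defphieps} one has $x+\phi^\eps(t,x)=M^\eps(t,x)/Z^\eps_W(t,x)$. Since $\cZ^\eps_{t,0}(\cdot,y)$ solves the (It\^o) SHE with datum $\delta(\cdot-y)$, and $e^{\beta W}$ is bounded ($W$ being periodic) while the heat kernel decays, $M^\eps$ solves $\partial_t M^\eps=\tfrac12\Delta M^\eps+\beta M^\eps\xi^\eps$ with $M^\eps(0,x)=x e^{\beta W(x)}$. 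Using the integer-shift invariance of the periodically extended noise one checks that $Z^\eps_W$, $u^\eps=\nabla\log Z^\eps_W$ and $\phi^\eps$ are all $1$-periodic in $x$, so the equations are genuinely posed on $\bT$.

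\emph{It\^o on the ratio.} Write $\Phi^\eps:=M^\eps/Z^\eps_W$ and apply It\^o's formula. Because $M^\eps$ and $Z^\eps_W$ carry the \emph{same} multiplicative noise $\beta(\cdot)\,\xi^\eps$, in $d\Phi^\eps$ the two martingale contributions cancel, and the two It\^o corrections --- the convexity term $+\beta^2\Phi^\eps R^\eps(0)$ coming from $z\mapsto 1/z$ and the cross-variation term $-\beta^2\Phi^\eps R^\eps(0)$ --- cancel as well. What remains is the deterministic identity $\partial_t\Phi^\eps=\tfrac12\big(\Delta M^\eps/Z^\eps_W\big)-\tfrac12\Phi^\eps\big(\Delta Z^\eps_W/Z^\eps_W\big)$. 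Substituting $M^\eps=\Phi^\eps Z^\eps_W$ and using $\Delta Z^\eps_W/Z^\eps_W=\nabla u^\eps+(u^\eps)^2$ together with $\Delta M^\eps/Z^\eps_W=\Delta\Phi^\eps+2u^\eps\nabla\Phi^\eps+\Phi^\eps(u^\eps)^2+\Phi^\eps\nabla u^\eps$, all terms proportional to $\Phi^\eps$ alone cancel and we are left with $\partial_t\Phi^\eps=\tfrac12\Delta\Phi^\eps+u^\eps\nabla\Phi^\eps$. Since $\Phi^\eps=x+\phi^\eps$ gives $\nabla\Phi^\eps=1+\nabla\phi^\eps$ and $\Delta\Phi^\eps=\Delta\phi^\eps$, this reads $\partial_t\phi^\eps=\tfrac12\Delta\phi^\eps+u^\eps\nabla\phi^\eps+u^\eps$; and $\phi^\eps(0,x)=M^\eps(0,x)/Z^\eps_W(0,x)-x=0$. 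This proves \eqref{e.eqphieps}.

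\emph{The equation for $g^\eps$.} Differentiate \eqref{e.eqphieps} in $x$. Since $g^\eps=1+\nabla\phi^\eps$, we have $\Delta\nabla\phi^\eps=\Delta g^\eps$ and $u^\eps\nabla\phi^\eps+u^\eps=u^\eps g^\eps$, hence $\partial_t g^\eps=\partial_t\nabla\phi^\eps=\tfrac12\Delta g^\eps+\nabla(u^\eps g^\eps)$ with $g^\eps(0,x)=1$, which is \eqref{e.eqgeps}. (Alternatively one may avoid stochastic calculus and argue pathwise through Lemma~\ref{l.phiepsPro}: for a fixed noise realization $u^\eps(t-\cdot,\cdot)$ is smooth in space, $\Phi^\eps(t,x)=\E_B[\cX^\eps_t]$, and differentiating the SDE \eqref{e.rwre} in its starting point gives $g^\eps(t,x)=\E_B\big[\exp\big(\int_0^t\nabla u^\eps(t-s,\cX^\eps_s)\,ds\big)\big]$, which is the Feynman--Kac representation of \eqref{e.eqgeps}; \eqref{e.eqphieps} then follows by integrating in $x$.) The only non-routine point is the rigorous application of It\^o's formula to the ratio $M^\eps/Z^\eps_W$: one needs $Z^\eps_W$ bounded away from $0$ on compact space-time sets, control of its spatial derivatives, and enough integrability for the stochastic-Fubini/It\^o manipulations --- all standard for the spatially mollified SHE (positivity, spatial smoothness, and positive and negative moment bounds of the type used in \cite{BC95,HK22}), but to be invoked with some care. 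Everything else is bookkeeping; the structural cancellation of the noise terms and of the It\^o corrections is exactly why the limiting equations are the ``naive'' ones.
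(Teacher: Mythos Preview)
Your proof is correct, but the main route you take is genuinely different from the paper's. The paper argues probabilistically: it \emph{starts} from the PDE \eqref{e.eqphieps}, applies It\^o's formula to $Y_s=\phi^\eps(t-s,\cX_s^\eps)$ along the diffusion \eqref{e.rwre}, integrates, takes $\E_B$, and identifies the resulting probabilistic representation $\phi^\eps(t,x)=\E_B\int_0^t u^\eps(t-s,\cX_s^\eps)\,ds$ with the polymer definition via Lemma~\ref{l.phiepsPro}. Your argument instead works directly on the SPDE level: you observe that $M^\eps$ and $Z^\eps_W$ solve the \emph{same} It\^o SHE, apply It\^o's formula to the ratio $\Phi^\eps=M^\eps/Z^\eps_W$, and exploit the structural cancellation of both the martingale parts and the It\^o corrections to land on the linear transport equation $\partial_t\Phi^\eps=\tfrac12\Delta\Phi^\eps+u^\eps\nabla\Phi^\eps$, from which \eqref{e.eqphieps} follows by $\Phi^\eps=x+\phi^\eps$. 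Both proofs finish the $g^\eps$ part the same way, by differentiating in $x$.

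What each buys: the paper's route never needs to handle a ratio of two semimartingales or justify an It\^o formula for $M^\eps/Z^\eps_W$ with linearly growing (non-periodic) $M^\eps$; it only uses It\^o along the one-dimensional process $\cX^\eps$ with smooth $\phi^\eps$, which is entirely elementary. Your route is more self-contained---it does not rely on Lemma~\ref{l.phiepsPro} or on the Girsanov identification of the polymer with $\cX^\eps$---and makes transparent \emph{why} the equation for $\phi^\eps$ has no noise term (the same multiplicative noise in numerator and denominator cancels). The caveat you flag---positivity and moment control for $Z^\eps_W$, spatial smoothness, and integrability for the ratio It\^o formula---is real but indeed routine for the spatially mollified equation. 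Your parenthetical alternative (differentiating the SDE in the starting point to obtain the Feynman--Kac formula $g^\eps(t,x)=\E_B\exp\!\big(\int_0^t\nabla u^\eps(t-s,\cX_s^\eps)\,ds\big)$) is correct as well and is close in spirit to the paper's approach.
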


\begin{proof}
Consider the solution to \eqref{e.eqphieps}, with a bit of an abuse of
notation we still denote it by $\phi^\eps$. Applying the It\^o formula
for the process  $Y_s=\phi^\eps(t-s,\cX_s^\eps)$, we obtain  
\[
\begin{aligned}
dY_s&=(-\partial_t \phi^\eps+\frac12\Delta \phi^\eps +u^\eps\nabla \phi^\eps ) ds+\nabla \phi^\eps dB_s\\
&=-u^\eps(t-s,\cX_s^\eps)ds+\nabla \phi^\eps(t-s,\cX_s^\eps)dB_s,
\end{aligned}
\]
which implies that (recall that $\cX_0^\eps=x$)
\begin{equation}\label{e.phiepsde}
0=\phi^\eps(t,x) -\int_0^t u^\eps(t-s,\cX_s^\eps)ds+\int_0^t \nabla\phi^\eps(t-s,\cX_s^\eps)dB_s.
\end{equation}
Taking expectation with respect to $B$ on both sides shows that 
\[
\phi^\eps(t,x)=\E_B \int_0^t u^\eps(t-s,\cX_s^\eps)ds.
\]
Applying Lemma~\ref{l.phiepsPro}, we complete the proof of \eqref{e.eqphieps}.
Since $\phi^\eps(t,x)$ is smooth in the $x-$variable and by definition $g^\eps=1+\nabla\phi^\eps$, we take derivatives in $x$ on both sides of \eqref{e.eqphieps} to obtain \eqref{e.eqgeps}.
\end{proof}

   The equation \eqref{e.eqphieps} for $\phi^\eps$ can be viewed as the
  corrector equation for the diffusion in random environment
  $\cX_s^\eps$ (it is the $\eps-$version of \eqref{e.eqphiIntro}).  Namely, using $\phi^\eps$ (see \eqref{e.phiepsde}), we
  can decompose the drift term in \eqref{e.rwre} as
\[
\int_0^t u^\eps(t-s,\cX_s^\eps)ds=\phi^\eps(t,x)+\int_0^t \nabla\phi^\eps(t-s,\cX_s^\eps)dB_s,
\]
so the process itself can be written as 
\begin{equation}\label{e.Xmade}
\begin{aligned}
\cX_t^\eps&=x+\int_0^t u^\eps(t-s,\cX_s^\eps)ds+B_t\\
&=x+\phi^\eps(t,x)+\int_0^t g^\eps(t-s,\cX_s^\eps)dB_s.
\end{aligned}
\end{equation}
In other words, to study the fluctuations of the  directed polymer endpoint, by Lemma~\ref{lm012104-23}, it suffices to consider the process $\cX^\eps$, and the decomposition in \eqref{e.Xmade} is the start of the standard homogenization argument. \blue{Nevertheless, unlike the ``static'' homogenization where the corrector
(constituting the so called  co-boundary term) does not
contribute on the diffusive scale, see e.g. the models in \cite[Chapter 9]{KLO12} and the references cited there,  in our dynamic setting with a compressible drift, the
$\phi^\eps$ term contributes to the limiting Gaussian distribution. Actually, the main challenge  of the paper is to quantify this contribution.} 

\subsection{Fokker-Planck equation and time reversal}
\label{s.fkreversal}

Recall that $\phi^\eps$ was defined in \eqref{e.defphieps}, and as $\eps\to0$ it converges to 
\begin{equation}
  \label{phi}
\phi(t,x)=\frac{\int_{\R} \cZ_{t,0}(x,x')(x'-x)e^{\beta W(x')}dx'}{\int_{\R} \cZ_{t,0}(x,x') e^{\beta W(x')}dx'}
\end{equation}
In addition, we have $Z_W^\eps, h^\eps,u^\eps$ converge to $Z_W,h,u$ respectively, with 
\begin{equation}\label{e.defZW}
\begin{aligned}
&Z_W(t,x)=\int_{\R} \cZ_{t,0}(x,y)e^{\beta W(y)}dy,\\
 &h=\log Z_W,  \quad\quad u=\nabla h.
\end{aligned}
\end{equation}
Thus, $h,u$ solve the KPZ, stochastic Burgers equation, respectively, that start from stationarity.  Similarly, the dependence of $h,u$ on $W$ was kept implicit.

Our goal is to derive the joint distribution of $(u(s,\cdot),\nabla \phi(s,\cdot))$ for large $s\gg1$, which was  needed in the Clark-Ocone representation (see \eqref{e.co651} and \eqref{e.co652}). Here $u(s,\cdot)$ will appear in the term $\rho_{\rm m}(t,-|s,y)$, and, once we approximate $\psi$ by $\phi$, the other term can be written as  $\phi(s,y')-\phi(s,y)=\int_y^{y'}\nabla\phi(s,z)dz$. It is worth pointing out that $\nabla\phi(s,\cdot)$ and $u(s,\cdot)$ are distribution-valued processes and what we actually need is the function-valued  ``increment processes'' $\phi(s,y)-\phi(s,0)$ and $h(s,y)-h(s,0)=\int_0^yu(s,z)dz$.

Since $\phi$ can be approximated by $\phi^\eps$ and the increments of $\phi^\eps$ is related to $g^\eps$ through
\[
\phi^\eps(s,y')-\phi^\eps(s,y)=\int_y^{y'}\nabla\phi^\eps(s,z)dz=\int_y^{y'}(g^\eps(s,z)-1)dz,
\]
it suffices to study the joint distribution of $u^\eps(s,\cdot)$ and $g^\eps(s,\cdot)$. On the other hand, we know from Lemma~\ref{l.geps} that $g^\eps$ solves the Fokker-Planck equation with the drift given by $u^\eps$:
\begin{equation}\label{e.eqgeps1}
\partial_t g^\eps=\frac12\Delta g^\eps+\nabla (u^\eps g^\eps), \quad\quad g^\eps(0,x)=1.
\end{equation}
Thus, the problem reduces to studying the above equation and the joint distribution of the coefficient and the solution. For a Fokker-Planck equation driven by a general random drift, it does not seem likely that one could obtain such explicit descriptions. However, in our case of the Burgers drift, there is an amazing time
  reversal anti-symmetry (see \cite[Proposition 1.1]{BQS11}):
  \begin{proposition}
    \label{prop011105-23}
    Fix any $t>0$. Then
\begin{equation}\label{e.burgersAsym}
\{u(s,\cdot)\}_{s\in[0,t]}\stackrel{\text{law}}{=}\{-u(t-s,\cdot)\}_{s\in[0,t]}.
\end{equation}
\end{proposition}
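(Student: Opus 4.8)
The plan is to adapt \cite[Proposition~1.1]{BQS11}, which proves this for the stochastic Burgers equation on $\R$, to the torus; its conceptual content is that the stationary KPZ/Burgers dynamics is \emph{reversible modulo a divergence-free current}, combined with the fact that the invariant measure is symmetric under $u\mapsto-u$. First I would record the ingredients, all of which are standard or already in hand: the law of $u(s,\cdot)$ is, for every $s\ge0$, that of $\beta\nabla W$ with $W$ a Brownian bridge on $\bT$ --- call this Gaussian law $\pi$ --- so that, since $\xi$ is white in time, $s\mapsto u(s,\cdot)$ is a time-homogeneous, stationary Markov process started from $\pi$; moreover $\pi$ is invariant under $u\mapsto-u$ and under the reflection $x\mapsto-x$. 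It follows that $\{u(t-s,\cdot)\}_{s\in[0,t]}$ is again a stationary Markov process started from $\pi$, governed by the $L^2(\pi)$-adjoint of the Burgers generator.

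The main step is to identify that adjoint. I would split the (formal) drift of \eqref{e.burgersIntro} into the linear part $\tfrac12\Delta u+\beta\nabla\xi$, whose associated Ornstein--Uhlenbeck dynamics is reversible with respect to $\pi$ (reversibility of the Brownian bridge for the additive-noise heat equation on $\bT$), and the transport term $b(u)=\tfrac12\nabla(u^2)=u\nabla u$. The key identity is that $b$ is \emph{divergence-free with respect to $\pi$}, i.e.\ $\mathrm{div}\,b+\langle b,\nabla\log\pi\rangle=0$: the first term vanishes by periodicity and the mean-zero constraint, while, since formally $\log\pi[u]=\mathrm{const}-\tfrac1{2\beta^2}\int_\bT u^2\,dx$, the second equals $-\beta^{-2}\int_\bT u^2\nabla u\,dx=-(3\beta^2)^{-1}\int_\bT\nabla(u^3)\,dx=0$. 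Hence the first-order operator $F\mapsto\langle b(u),DF(u)\rangle$ is antisymmetric in $L^2(\pi)$, so the $\pi$-adjoint of the Burgers generator is obtained by flipping the sign of the transport term only: the reversed process $v(s,\cdot):=u(t-s,\cdot)$ solves, for some white noise $\tilde\xi$ with $\tilde\xi\stackrel{\text{law}}{=}\xi$ independent of $v(0,\cdot)\sim\pi$, the equation $\partial_s v=\tfrac12\Delta v-\tfrac12\nabla(v^2)+\beta\nabla\tilde\xi$.

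To conclude, set $w:=-v$. Using $\nabla(v^2)=\nabla(w^2)$ and $-\tilde\xi\stackrel{\text{law}}{=}\xi$, one checks that $w$ solves $\partial_s w=\tfrac12\Delta w+\tfrac12\nabla(w^2)+\beta\nabla(-\tilde\xi)$, which is the original Burgers equation \eqref{e.burgersIntro}, started from $w(0,\cdot)=-v(0,\cdot)$, whose law is $-\pi=\pi$, and driven by a white noise independent of the initial datum. Since the stationary solution is a measurable function of the driving noise and the (independent) initial field, equality in law of these inputs propagates to the solution process, giving $\{w(s,\cdot)\}_{s\in[0,t]}\stackrel{\text{law}}{=}\{u(s,\cdot)\}_{s\in[0,t]}$; unwinding the definitions yields $\{-u(t-s,\cdot)\}_{s\in[0,t]}\stackrel{\text{law}}{=}\{u(s,\cdot)\}_{s\in[0,t]}$, which is \eqref{e.burgersAsym}.

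The hard part is to make the second step rigorous at the level of the distribution-valued $u$: the transport term $u\nabla u$ is ill-defined pointwise, $\pi$ is genuinely infinite-dimensional, and time reversal of an infinite-dimensional stationary Markov process requires care. The cleanest implementation, and the one behind \cite{BQS11}, is to pass to the Cole--Hopf picture: write $u(s,\cdot)=\nabla\log Z_W(s,\cdot)$ through the linear SHE propagator $\cZ_{s',s}$, use the pathwise Feynman--Kac identity $\widehat\cZ_{b,a}(x,y)=\cZ_{t-a,t-b}(y,x)$, valid under the noise time reversal $\widehat\xi(r,\cdot)=\xi(t-r,\cdot)$ together with $\widehat\xi\stackrel{\text{law}}{=}\xi$, and then relocate the stationary weighting $e^{\beta W}$ from time $0$ to time $t$ using the stationarity of $u$ and the negation/reflection symmetries of the Brownian bridge; every torus-specific fact invoked (Brownian-bridge invariance, its $u\mapsto-u$ symmetry, reflection symmetry of $\xi$, evenness of the torus heat kernel) holds verbatim. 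One cautionary remark: the spatial mollification $\xi^\eps$ used elsewhere in the paper perturbs the invariant measure of the Burgers flow, so it is \emph{not} efficient to deduce \eqref{e.burgersAsym} by first proving it for $u^\eps$ and letting $\eps\to0$ --- the antisymmetry is exact only in the white-noise limit, where the Gaussian reversibility structure is exact.
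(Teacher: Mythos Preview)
The paper does not supply its own proof of this proposition: it simply cites \cite[Proposition~1.1]{BQS11} and states the result. Your proposal is therefore not being compared against an in-paper argument but against the literature reference, and in that sense it is on target---you correctly identify the BQS11 result as the source and sketch how to adapt it to the torus.

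Your outline is sound. The decomposition of the generator into a $\pi$-reversible Ornstein--Uhlenbeck part and a $\pi$-antisymmetric transport part, together with the symmetry $\pi=-\pi$, is exactly the mechanism behind the result; the formal computation $\int_{\bT}u^2\nabla u\,dx=\tfrac13\int_{\bT}\nabla(u^3)\,dx=0$ is the right heuristic for antisymmetry. You are also right that the rigorous route goes through Cole--Hopf and the propagator identity $\widehat{\cZ}_{b,a}(x,y)=\cZ_{t-a,t-b}(y,x)$ under time reversal of the noise, since the generator computation is purely formal for distribution-valued $u$. Your final caution---that the $\eps$-mollified drift $u^\eps$ does \emph{not} enjoy the anti-symmetry because the mollification perturbs the invariant measure---is precisely the point the paper emphasizes elsewhere (it is ``the only place in the paper where the noise is required to be white''), so you have read the situation correctly.
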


\blue{The above relation \eqref{e.burgersAsym} only holds for $u$, but not for $u^\eps$. Nevertheless, this inspires us to define another diffusion driven by a time reversed drift and connect $g^\eps$ in \eqref{e.eqgeps1} to its Fokker-Planck equation.}  

Before that, we first note that the  solution to \eqref{e.eqgeps1} is periodic in space, so we view it as the
Fokker-Planck equation satisfied by the density of the following diffusion in random environment, with $\bT$ being the state space:
\begin{equation}\label{e.rwre1}
d\cY_s^\eps=-u^\eps(s,\cY_s^\eps)ds+dB_s, \quad\quad \cY_0^\eps \sim m_\bT,
\end{equation}
where ``$\cY_0^\eps\sim m_{\bT}$'' means that $\cY_0^\eps$ is sampled
from the Lebesgue measure on $\bT$, denoted by $m_\bT$. It is clear
that for any $s>0$, $g^\eps(s,\cdot)$ is the quenched density of
$\cY_s^\eps$ on $\bT$,   
\[
g^\eps(s,y)\ge0\quad\mbox{and}\quad \int_{\bT} g^\eps(s,y)dy=1.
\] 

Inspired by the time reversal anti-symmetry in \eqref{e.burgersAsym}, we consider another diffusion in random environment, which is related to the directed polymer in light of Lemma~\ref{lm012104-23}: for fixed $t>0$, let $\tilde{\cY}^\eps$ solve
\begin{equation}\label{e.rwre2}
d\tilde{\cY}_s^\eps=u^\eps(t-s,\tilde{\cY}_s^\eps)ds+dB_s, \quad\quad \tilde{\cY}_0^\eps\sim m_\bT.
\end{equation}
Let $\tilde{g}^\eps(t;s,\cdot)$ be the quenched density of
$\tilde{\cY}_s^\eps$ (again on $\bT$ rather than $\R$). Comparing
$\tilde{\cY}_s^\eps$ to $\cX_s^\eps$ defined in \eqref{e.rwre}, there are two differences: (i) 
 the distribution of the starting point, (ii) we
have chosen  $\R$ as the state space for $\cX_s^\eps$, while the state space for $ \tilde{\cY}_s^\eps$ is $\bT$. Here $t>0$ is fixed, and we have used the simplified notations $\cX_s^\eps,\tilde{\cY}_s^\eps$,  where the dependence on $t$ was again omitted.

We have the following lemma which relates $\tilde{g}^\eps$ to the
midpoint density of a directed polymer:
\begin{lemma}\label{l.tildegepsPro}
For any $s\in(0,t]$ and $y\in \bT$, we have 
\[
\begin{aligned}
\tilde{g}^\eps(t;s,y)&=\int_\bT \rho^\eps_{\rm m}(t,x|t-s,y)dx\\
&=\int_{\bT}\left( \frac{ \G^\eps_{t,t-s}(x,y)\int_{\bT}\G^\eps_{t-s,0}(y,x')e^{\beta W(x')}dx'}{\int_{\bT} \G^\eps_{t,0}(x,x')e^{\beta W(x')}dx'}\right) dx,
\end{aligned}
\]
where $\rho^\eps_{\rm m}$ was given in \eqref{e.defrhoepsnew}.
\end{lemma}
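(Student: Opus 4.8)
The plan is to read the identity off from Lemma~\ref{lm012104-23}. Observe that the diffusion $\tilde{\cY}^\eps$ in \eqref{e.rwre2} solves exactly the SDE \eqref{e.rwre} defining $\cX^\eps$; the only discrepancies are that $\tilde{\cY}^\eps_0$ is sampled from the uniform measure $m_\bT$ rather than pinned at a deterministic point, and that $\tilde{\cY}^\eps$ is regarded as a process on $\bT$ rather than on $\R$. Since the drift $u^\eps(t-\cdot,\cdot)$ is $1$-periodic in the spatial variable, reduction modulo $1$ commutes with solving the SDE: for a fixed realization of $(\xi^\eps,W)$ and a fixed starting point $x\in\bT$, the $\bT$-valued solution has, at time $s$, the density obtained by periodizing the $\R$-valued density of $\cX^\eps_s$. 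Hence it suffices to compute that periodized density for each $x$ and then average over $x\sim m_\bT$.

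First I would fix $(\xi^\eps,W)$, condition on $\tilde{\cY}^\eps_0=x$, and invoke Lemma~\ref{lm012104-23}: the quenched law of $\cX^\eps_s$ on $\R$ is the law of $B_s$ under the Gibbs weight
\[
\frac{1}{Z^\eps_W(t,x)}\exp\Big\{\beta\int_0^t\xi^\eps(t-\ell,B_\ell)\,d\ell-\tfrac{\beta^2}{2}R^\eps(0)t+\beta W(B_t)\Big\}\,\mathbb W_t(dB),\qquad B_0=x.
\]
Periodizing, the $\bT$-density at a point $y$ is $\sum_{n\in\Z}(\text{density of }B_s=y+n)$ under this weight, i.e.\ the density of $\dot B_s$; by the definition \eqref{e.defrhoeps} of $\rho^\eps_{\rm m}$ — which inserts the factor $\delta(\dot B_{t-\sigma}-y)$ and is therefore the law of $\dot B_{t-\sigma}$ under the (already normalized) polymer measure — this density equals $\rho^\eps_{\rm m}(t,x|t-s,y)$. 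Averaging over $x\sim m_\bT$, using that $\tilde{\cY}^\eps_0$ is independent of both $B$ and the environment $(\xi^\eps,W)$, gives the first identity
\[
\tilde g^\eps(t;s,y)=\int_\bT\rho^\eps_{\rm m}(t,x|t-s,y)\,dx.
\]

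The second identity is then a direct substitution: replacing $s$ by $t-s$ in the formula \eqref{e.defrhoepsnew} gives
\[
\rho^\eps_{\rm m}(t,x|t-s,y)=\frac{\G^\eps_{t,t-s}(x,y)\int_{\bT}\G^\eps_{t-s,0}(y,x')e^{\beta W(x')}dx'}{\int_{\bT}\G^\eps_{t,0}(x,x')e^{\beta W(x')}dx'},
\]
and integrating in $x\in\bT$ yields the claim. No genuine difficulty appears; the only point requiring care is the bookkeeping of the time direction — the polymer weight in Lemma~\ref{lm012104-23} is built from $\ell\mapsto\xi^\eps(t-\ell,\cdot)$, so that the time-$s$ marginal of $\cX^\eps$ corresponds to the ``$t-s$'' slot in the notation of $\rho^\eps_{\rm m}$ — together with the (automatic) matching of normalizations, both normalizing constants being $Z^\eps_W(t,x)=\int_{\bT}\G^\eps_{t,0}(x,x')e^{\beta W(x')}dx'$.
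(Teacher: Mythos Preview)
Your proposal is correct and follows essentially the same approach as the paper: both invoke Lemma~\ref{lm012104-23} to identify the quenched density of $\tilde{\cY}^\eps_s$ started from $x$ with $\rho^\eps_{\rm m}(t,x|t-s,y)$, and then average over $x\sim m_{\bT}$. The paper's proof is a single sentence, while you spell out the periodization and the time-direction bookkeeping that are left implicit there; the second displayed identity via \eqref{e.defrhoepsnew} is also left to the reader in the paper.
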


\begin{proof}
By Lemma~\ref{lm012104-23}, we know that if
$\tilde{\cY}_0^\eps=x$, then the quenched  density of
$\tilde{\cY}_s^\eps$ is given by $\rho^\eps_{\rm m}(t,x|t-s,y)$
and this actually completes the proof since $\tilde{\cY}_0^\eps$  is sampled from the Lebesgue measure on $\bT$.
\end{proof}

We are particularly interested in $\tilde{g}^\eps(t;t,\cdot)$, which takes the form
\[
\tilde{g}^\eps(t;t,y)=\int_{\bT}\left( \frac{  \G_{t,0}^\eps(x,y) e^{\beta
      W(y)}}{\int_{\bT} \G_{t,0}^\eps(x,x') e^{\beta W(x')}dx'}\right) dx, \quad\quad y\in \bT.
\]
Define the random density \begin{equation}\label{e.deftildeg}
\tilde{g}(t,y)=\int_{\bT}\left( \frac{  \G_{t,0}(x,y) e^{\beta
      W(y)}}{\int_{\bT} \G_{t,0}(x,x')e^{\beta W(x')}dx'}\right) dx,
\quad\quad  y\in \bT.
\end{equation}
The following result is rather standard, and the proof follows an
approximation argument similar to that of \cite[Theorem 2.2]{BC95}.
\begin{lemma}\label{c.contildeg}
For any $t>0$, we have
\[
\begin{aligned}
\tilde{g}^\eps(t;t,\cdot)\to \tilde{g}(t,\cdot),\quad\mbox{ as}\quad \eps\to0
\end{aligned}
\]
 in $C(\bT)$ in probability. 
\end{lemma}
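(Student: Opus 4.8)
The plan is to reduce the statement to four ingredients and then assemble them in a routine way: (i) the convergence $\G^\eps_{t,0}\to\G_{t,0}$ of the mollified periodic propagator, uniformly on $\bT^2$; (ii) a uniform‑in‑$\eps$ positive moment bound on $\sup_{x,y\in\bT}\G^\eps_{t,0}(x,y)$; (iii) a uniform‑in‑$\eps$ negative moment bound on the denominator occurring in the definition of $\tilde g^\eps(t;t,\cdot)$; and (iv) a uniform‑in‑$\eps$ spatial Hölder estimate for $y\mapsto\tilde g^\eps(t;t,y)$. It is convenient to set $A^\eps(x,y)=\G^\eps_{t,0}(x,y)e^{\beta W(y)}$ and $B^\eps(x)=\int_\bT\G^\eps_{t,0}(x,x')e^{\beta W(x')}dx'$, with $A,B$ the analogous quantities built from the white‑noise propagator, so that $\tilde g^\eps(t;t,y)=\int_\bT A^\eps(x,y)B^\eps(x)^{-1}dx$ and $\tilde g(t,y)=\int_\bT A(x,y)B(x)^{-1}dx$ as in \eqref{e.deftildeg}.

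For (i), I would use the Feynman--Kac representation together with the approximation argument of \cite[Theorem 2.2]{BC95}: for fixed $t>0$, $\cZ^\eps_{t,0}(x,y)\to\cZ_{t,0}(x,y)$ in $L^p(\Omega)$ for every $p$, and summing over the torus shifts using the Gaussian decay of the heat kernel gives the same for $\G^\eps$; combining this pointwise convergence with the uniform‑in‑$\eps$ Hölder continuity of $(x,y)\mapsto\G^\eps_{t,0}(x,y)$ (valid for $t>0$) and a Kolmogorov‑type continuity estimate upgrades it to $\EE\sup_{x,y\in\bT}|\G^\eps_{t,0}(x,y)-\G_{t,0}(x,y)|^p\to0$. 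For (ii), the moment estimates \eqref{e.positivemm}--\eqref{e.positivemm1} are in fact proved for mollified noise and hold uniformly in $\eps$, which together with (i) yields $\sup_\eps\EE\sup_{x,y\in\bT}\G^\eps_{t,0}(x,y)^p<\infty$.

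For (iii), I would bound the denominator from below by
\[
B^\eps(x)\ \ge\ \Big(\min_{x'\in\bT}e^{\beta W(x')}\Big)\int_\bT\G^\eps_{t,0}(x,x')dx',
\]
observe that the first factor has finite $\E_W$‑moments of all negative orders by Fernique's theorem applied to $\sup_{\bT}|W|$, and that the second factor is the mollified point‑to‑line partition function, whose negative moments are bounded uniformly in $\eps$ (the mollified analogue of \eqref{e.negativemm}, cf.\ \cite[Lemma B.2]{GK21}); hence $\sup_\eps\E_W\EE\sup_{x\in\bT}B^\eps(x)^{-p}<\infty$ for every $p$, and likewise for $B$. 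Then the algebraic identity
\[
\frac{A^\eps}{B^\eps}-\frac AB=\frac{A^\eps-A}{B^\eps}-\frac{A\,(B^\eps-B)}{B^\eps B}
\]
combined with the Cauchy--Schwarz inequality, the $L^p$ convergence from (i), and the uniform moment bounds from (ii)--(iii) gives $\E_W\EE\,|A^\eps(x,y)B^\eps(x)^{-1}-A(x,y)B(x)^{-1}|^q\to0$ uniformly in $x,y\in\bT$ for every $q\in[1,\infty)$; integrating in $x$ shows $\sup_{y\in\bT}\E_W\EE\,|\tilde g^\eps(t;t,y)-\tilde g(t,y)|^q\to0$.

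Finally, to turn uniform‑in‑$y$ $L^q$ convergence into convergence in $C(\bT)$ in probability, I would prove the Hölder estimate in (iv) --- which follows from the Hölder continuity in $y$ of $\G^\eps_{t,0}(x,y)$ and of $e^{\beta W(y)}$ with constants independent of $\eps$ --- and then split $\sup_{y}|\tilde g^\eps(t;t,y)-\tilde g(t,y)|$ into the maximum over a fine mesh of $\bT$, which tends to $0$ in probability, plus an oscillation term controlled uniformly in $\eps$ by (iv) (a Garsia--Rodemich--Rumsey / Arzelà--Ascoli argument). I expect the two points requiring genuine care to be (iii), since the uniform‑in‑$\eps$ negative moment control on the denominator --- and its interaction with the extra randomness coming from $W$ --- is where integrability could break down, and the sup‑norm upgrade in (iv); the remaining inputs are standard approximation facts for the SHE.
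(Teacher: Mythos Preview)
Your proposal is correct and is precisely the kind of argument the paper has in mind: the paper does not give a detailed proof but simply notes that the result ``is rather standard, and the proof follows an approximation argument similar to that of \cite[Theorem 2.2]{BC95}.'' Your four ingredients (uniform convergence of $\G^\eps_{t,0}$, uniform positive and negative moment bounds, and a uniform H\"older estimate to upgrade to $C(\bT)$) are exactly a fleshed-out implementation of that approximation argument, so there is nothing substantively different to compare.
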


Define 
\begin{equation}\label{e.deftildephi}
\tilde{\phi}(t,y)=\int_0^y [\tilde{g}(t,y')-1]dy', \quad\quad y\in \bT.
\end{equation}
Here is the main result of this section (recall that $\nabla h^\eps=u^\eps$ and $\nabla\phi^\eps=g^\eps-1$). 
\begin{proposition}\label{p.keyP}
Fix any $t>0$, as $\eps\to0$, we have the following weak convergence on $C(\bT)\times C(\bT)$:
\begin{equation}\label{e.keyConver}
(h^\eps(t,\cdot)-h^\eps(t,0),\phi^\eps(t,\cdot)-\phi^\eps(t,0))\Rightarrow (-\beta W(\cdot),\tilde{\phi}(t,\cdot)).
\end{equation}
As a result,
\begin{equation}\label{e.equallaw}
(h(t,\cdot)-h(t,0), \phi(t,\cdot)-\phi(t,0))\stackrel{\text{law}}{=}(-\beta W(\cdot),\tilde{\phi}(t,\cdot)).
\end{equation}
\end{proposition}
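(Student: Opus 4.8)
The plan is to realize both sides of \eqref{e.keyConver} as one and the same functional of a drift path, evaluated once on the Burgers drift and once on its negated time reversal, and then to invoke the anti-symmetry of Proposition~\ref{prop011105-23}. Throughout I work with the $C(\bT)$-valued ``increment processes'' $s\mapsto h^\eps(s,\cdot)-h^\eps(s,0)=\int_0^\cdot u^\eps(s,z)\,dz$ rather than with the distribution-valued drifts $u^\eps(s,\cdot)=\nabla h^\eps(s,\cdot)$. The two diffusions $\cY^\eps$ of \eqref{e.rwre1} (drift $-u^\eps(s,\cdot)$, quenched density $g^\eps(s,\cdot)=1+\nabla\phi^\eps(s,\cdot)$) and $\tilde\cY^\eps$ of \eqref{e.rwre2} (drift $u^\eps(t-s,\cdot)$, quenched density $\tilde g^\eps(t;s,\cdot)$) are the two inputs.

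For each fixed $\eps>0$ I would first record the relevant bookkeeping. Let $\Psi^\eps$ send a drift path $\{b(s,\cdot)\}_{s\in[0,t]}$ to $\int_0^\cdot\big(\varrho^b(t,z)-1\big)\,dz$, where $\varrho^b(t,\cdot)$ is the quenched density at time $t$ of $dZ_s=b(s,Z_s)\,ds+dB_s$ with $Z_0\sim m_\bT$. Then Lemma~\ref{l.geps} and \eqref{e.defg} give $\phi^\eps(t,\cdot)-\phi^\eps(t,0)=\Psi^\eps\big(\{-u^\eps(s,\cdot)\}_{s\in[0,t]}\big)$, while the construction of $\tilde\cY^\eps$ and Lemma~\ref{l.tildegepsPro} give $\tilde\phi^\eps(t,\cdot):=\int_0^\cdot\big(\tilde g^\eps(t;t,z)-1\big)\,dz=\Psi^\eps\big(\{u^\eps(t-s,\cdot)\}_{s\in[0,t]}\big)$. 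Since $h^\eps(0,\cdot)=\beta W(\cdot)$ exactly, introducing the single ``master map''
\[
G^\eps:\ \{b(s,\cdot)\}_{s\in[0,t]}\ \longmapsto\ \Big(\textstyle\int_0^\cdot b(t,z)\,dz,\ \Psi^\eps\big(\{-b(s,\cdot)\}_{s\in[0,t]}\big)\Big)
\]
one obtains $G^\eps\big(\{u^\eps(s,\cdot)\}_{s\in[0,t]}\big)=\big(h^\eps(t,\cdot)-h^\eps(t,0),\ \phi^\eps(t,\cdot)-\phi^\eps(t,0)\big)$ and $G^\eps\big(\{-u^\eps(t-s,\cdot)\}_{s\in[0,t]}\big)=\big(-\beta W(\cdot),\ \tilde\phi^\eps(t,\cdot)\big)$, and these two arguments of $G^\eps$ are each other's negated time reversal.

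Next I would let $\eps\to0$. The Cole--Hopf solutions converge, so $\{u^\eps(s,\cdot)\}_{s\in[0,t]}\to\{u(s,\cdot)\}_{s\in[0,t]}$ and $\{-u^\eps(t-s,\cdot)\}_{s\in[0,t]}\to\{-u(t-s,\cdot)\}_{s\in[0,t]}$, and by Proposition~\ref{prop011105-23} the two limiting drift paths have the same law. The substantive input is that $G^\eps$ is \emph{stable} along this limit: the joint law of $\big(\{b^\eps(s,\cdot)\}_s,\ \varrho^{b^\eps}(t,\cdot)\big)$ converges to a limit that depends only on the law of the limiting distributional drift. Equivalently, the limiting singular SDEs $d\cY_s=-u(s,\cY_s)\,ds+dB_s$ and $d\tilde\cY_s=u(t-s,\tilde\cY_s)\,ds+dB_s$ are well posed, their quenched time-$t$ densities are the limits of $g^\eps(t,\cdot)$ resp.\ $\tilde g^\eps(t;t,\cdot)$ (Lemma~\ref{c.contildeg} for the latter), and these densities are measurable functionals of the driving drift. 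Granting this --- the content of Section~\ref{s.singulardiffusion} --- the two pairs $G^\eps\big(\{u^\eps(s,\cdot)\}\big)$ and $G^\eps\big(\{-u^\eps(t-s,\cdot)\}\big)$ converge in law to a common limit; since in addition $\tilde\phi^\eps(t,\cdot)\to\tilde\phi(t,\cdot)$ in probability (Lemma~\ref{c.contildeg}), that common limit is $(-\beta W(\cdot),\tilde\phi(t,\cdot))$, which is \eqref{e.keyConver}. Finally $h^\eps\to h$ and $\phi^\eps\to\phi$ give $\big(h^\eps(t,\cdot)-h^\eps(t,0),\phi^\eps(t,\cdot)-\phi^\eps(t,0)\big)\to\big(h(t,\cdot)-h(t,0),\phi(t,\cdot)-\phi(t,0)\big)$ in probability, and a limit in probability must agree in law with the weak limit, yielding \eqref{e.equallaw}.

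The main obstacle is exactly the stability of $G^\eps$: one must show, using the theory of diffusions with distributional drift, that the mollified diffusions and their densities converge to well-defined limiting objects which are measurable functionals of the singular Burgers drift, so that the equality in law of the drift paths --- the only step where the whiteness of the noise and Proposition~\ref{prop011105-23} enter --- can be transferred to an equality in law of the pairs $\big(\text{drift path},\ \text{time-}t\text{ density}\big)$. Everything else is the two algebraic identities for $G^\eps$ together with routine convergence of Cole--Hopf solutions.
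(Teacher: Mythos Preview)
Your proposal is correct and follows essentially the same route as the paper: express both sides as the same deterministic functional of a drift path (your map $G^\eps$), pass to the $\eps\to0$ limit using the singular-diffusion machinery, and then invoke the time-reversal anti-symmetry $\{u(s,\cdot)\}\stackrel{\rm law}{=}\{-u(t-s,\cdot)\}$ at the limit. The paper organizes the argument slightly differently---it first reduces to finite-dimensional distributions and then packages your ``stability of $G^\eps$'' as Proposition~\ref{p.sdiff}, namely the a.s.\ weak convergence $\bbQ_t^\eps\Rightarrow\bbQ_t$, $\tilde\bbQ_t^\eps\Rightarrow\tilde\bbQ_t$ together with $(\mathscr{U}_t,\bbQ_t)\stackrel{\rm law}{=}(\tilde{\mathscr{U}}_t,\tilde\bbQ_t)$---but the substance is identical to what you identified as the main obstacle.
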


The above result comes from the time reversal anti-symmetry
 \eqref{e.burgersAsym}, which only holds in the limiting case of
 $\eps=0$. To understand where \eqref{e.equallaw} comes from, let us
 pretend the anti-symmetry also holds for $\eps>0$: if
  $\{u^\eps(s,\cdot)\}_{s\in[0,t]}\stackrel{\text{law}}{=}\{-u^\eps(t-s,\cdot)\}_{s\in[0,t]}$,
  then by comparing the two diffusions described by \eqref{e.rwre1}
  and \eqref{e.rwre2} respectively, it is straightforward to conclude
  that $(u^\eps(t,\cdot),
  g^\eps(t,\cdot))\stackrel{\text{law}}{=}(-u^\eps(0,\cdot),\tilde{g}^\eps(t;t,\cdot))$, which can be seen as an $\eps-$version of \eqref{e.equallaw}. 
  Although this argument does not apply to   $\eps>0$, as the time reversal anti-symmetry for the Burgers solution  holds only in the limiting case of $\eps=0$, it is natural to expect \eqref{e.keyConver} to hold.
The proof of Proposition~\ref{p.keyP} uses the well-developed results
for singular diffusions with distribution-valued drifts, which we prove in the next section.

\blue{\begin{remark}
The relation \eqref{e.equallaw} gives the explicit joint law of $(u(t,\cdot),\nabla \phi(t,\cdot))$. On the formal level, $g=1+\nabla\phi$ solves the Fokker-Planck equation $\partial_t g=\tfrac12\Delta g+\nabla (ug)$, so if we consider the singular diffusion $d\cY_s=-u(s,\cY_s)ds+dB_s$, it actually leads to the explicit invariant measure of the process of ``the environment seen from the particle''.
\end{remark}}

\subsection{Singular diffusion and proof of Proposition~\ref{p.keyP}}
\label{s.singulardiffusion}
Fix $t>0$ in this section. Recall that
\[
\begin{aligned}
\phi^\eps(t,x)&=\frac{\int_{\R} \cZ^\eps_{t,0}(x,x')(x'-x)e^{\beta W(x')}dx'}{\int_{\R} \cZ^\eps_{t,0}(x,x')e^{\beta W(x')}dx'}\\
&=\frac{\int_{\R} \cZ^\eps_{t,0}(x,x')x'e^{\beta W(x')}dx'}{\int_{\R}
  \cZ^\eps_{t,0}(x,x')e^{\beta W(x')}dx'}-x,\\
h^\eps(t,x)&=\log  \int_{\R} \cZ^\eps_{t,0}(x,x')e^{\beta W(x')}dx',
\end{aligned}
\]
and $\phi,h$ are of the same form as $\phi^\eps,h^\eps$, with $\cZ^\eps$ replaced by $\cZ$.
Therefore, $\phi^\eps,h^\eps$ are both written as smooth functionals of functions of the form 
$
\int_{\R} \cZ^\eps_{t,0}(x,x')f(x')dx'$ for some continuous function
$f$, namely, the solution to SHE driven by $\xi^\eps$ started from
$f$.  Thus, by a standard argument (see e.g. \cite[Theorem 2.2]{BC95}) we know that, for each $t>0$, 
\[
(h^\eps(t,\cdot),\phi^\eps(t,\cdot))\Rightarrow (h(t,\cdot),\phi(t,\cdot)),
\]
weakly in $C(\bT)\times C(\bT)$.

With the above weak convergence, to prove Proposition~\ref{p.keyP}, it suffices to show the convergence of finite dimensional distributions in \eqref{e.keyConver}. In other words, take any $n\geq1$ and $x_1,\ldots,x_n\in\bT$, we aim at proving the convergence in distribution of 
\begin{equation}\label{e.confinite}
(X_1^\eps,\ldots,X_n^\eps,Y_1^\eps,\ldots,Y_n^\eps)\Rightarrow (X_1,\ldots,X_n,Y_1,\ldots,Y_n)
\end{equation}
as $\eps\to0$, with 
\begin{equation}\label{e.defXepsYeps}
\begin{aligned}
&X_i^\eps=h^\eps(t,x_i)-h^\eps(t,0), \quad \quad X_i=-\beta W(x_i),\\
&Y_i^\eps=\phi^\eps(t,x_i)-\phi^\eps(t,0),  \quad\quad Y_i=\tilde{\phi}(t,x_i).
\end{aligned}
\end{equation}

Before going to  the proof of \eqref{e.confinite}, we first introduce a few more notations. Denote $\zeta=(\xi,W)$ which represents the underlying random realization. For each realization of $\zeta$ (hence each realization of the drift $u^\eps$), let $\bbQ_t^\eps,\tilde{\bbQ}_t^\eps$ be the probability measures on $C([0,t],\bT)$ induced by $\{\cY_s^\eps\}_{s\in[0,t]}, \{\tilde{\cY}_s^\eps\}_{s\in[0,t]}$ respectively.  Namely, $\bbQ_t^\eps,\tilde{\bbQ}_t^\eps$ are the quenched probability measures for the diffusions in random environments, and they depend on $\eps>0$ and also on the random realization $\zeta$. Let $\mathcal{M}_1(C([0,t],\bT))$ denote the set of probability measures on $C([0,t],\bT)$, equipped with the topology of weak convergence. One can view $\bbQ_t^\eps,\tilde{\bbQ}_t^\eps$ as random variables taking values in $\mathcal{M}_1(C([0,t],\bT))$. Define 
\begin{equation}\label{e.defU}
\mathscr{U}_t=\{-u(s,x)\}_{s\in[0,t],x\in\bT}, \quad\quad \tilde{\mathscr{U}}_t=\{u(t-s,x)\}_{s\in[0,t],x\in\bT},
\end{equation}
which represent the drift for the two diffusions in random environment (in the limit of $\eps=0$), and we view them as random variables taking values $C([0,t],C^{-\alpha}(\bT))$ for some $\alpha>1/2$.

Next, we state a result which is borrowed from the singular diffusion literature and will play a crucial role in our analysis.

\begin{proposition}\label{p.sdiff}
As $\eps\to0$, $\bbQ_t^\eps$ and $\tilde{\bbQ}_t^\eps$ \blue{converge weakly} almost surely. In other words, there exist $\bbQ_t,\tilde{\bbQ}_t$, which are random variables taking values in $\mathcal{M}_1(C([0,t],\bT))$, such that for almost every $\zeta$,  we have $\bbQ_t^\eps\Rightarrow \bbQ_t$ and $\tilde{\bbQ}_t^\eps\Rightarrow \tilde{\bbQ}_t$ as $\eps\to0$, \blue{where ``$\Rightarrow$'' represents the weak convergence of measures.} In addition, we have 
\begin{equation}\label{e.identitylaw}
(\mathscr{U}_t,\bbQ_t)\stackrel{\text{law}}{=}(\tilde{\mathscr{U}}_t,\tilde{\bbQ}_t).
\end{equation}
\end{proposition}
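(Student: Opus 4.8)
The plan is to combine three ingredients. First, the almost sure convergence, as $\eps\to0$, of the mollified Burgers drifts $u^\eps$ to the limiting drift $u$ in the topology of $C([0,t],C^{-\alpha}(\bT))$, together with the convergence of the additional ``rough'' data that is needed to make sense of the singular diffusions. Second, a stability statement borrowed from the theory of SDEs with distributional drift, which upgrades the first ingredient to the \emph{quenched} weak convergence $\bbQ_t^\eps\Rightarrow\bbQ_t$, $\tilde{\bbQ}_t^\eps\Rightarrow\tilde{\bbQ}_t$ and, crucially, exhibits both limits as the image, under one and the same measurable map $\Phi$ --- heuristically ``the law on $C([0,t],\bT)$ of the diffusion started from $m_\bT$ with the prescribed drift field'' --- of the corresponding drift. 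Third, the time reversal anti-symmetry of Proposition~\ref{prop011105-23}, which identifies the laws of the two drift fields $\mathscr{U}_t$ and $\tilde{\mathscr{U}}_t$ from \eqref{e.defU} and is then pushed forward through $\Phi$ to yield \eqref{e.identitylaw}.

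For the convergence of the drifts: since $Z_W^\eps$ solves the mollified SHE \eqref{e.Zeps} starting from the smooth, strictly positive datum $e^{\beta W}$, the positive and negative moment and regularity bounds for the SHE (as in \cite{HK22}) together with the chaos-expansion argument of \cite{BC95} give, along the mollification and for a.e.\ $\zeta=(\xi,W)$, that $Z_W^\eps\to Z_W$ and hence $u^\eps=\nabla\log Z_W^\eps\to u$, and likewise $u^\eps(t-\cdot,\cdot)\to u(t-\cdot,\cdot)$, in $C([0,t],C^{-\alpha}(\bT))$ for every $\alpha\in(\tfrac12,1)$. Moreover $u$ is a Wick functional of the Gaussian noise, so the enhanced object entering the paracontrolled/rough solution theory --- the renormalised resonant products, equivalently the data attached to the corrector equation \eqref{e.eqphieps}--\eqref{e.eqgeps} --- is the almost sure limit of its mollified counterpart and is a measurable function of $u$ alone; sign reversal and time reversal preserve all of these regularity and measurability statements.

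For the stability step and the transfer of the anti-symmetry: both \eqref{e.rwre1} and \eqref{e.rwre2} are diffusions on $\bT$ whose drift is the spatial gradient of a field that is, uniformly in time, only $C^{1/2-}$ in space, which is the borderline regularity handled by the singular-diffusion literature \cite{DD16,CC18,MH13,HZZZ21,KP22}. Invoking well-posedness (existence and uniqueness in law) of the limiting martingale problems and the stability of the construction under mollification of the drift, one gets, for a.e.\ $\zeta$, $\bbQ_t^\eps\Rightarrow\bbQ_t$ and $\tilde{\bbQ}_t^\eps\Rightarrow\tilde{\bbQ}_t$ in $\mathcal{M}_1(C([0,t],\bT))$, with $\bbQ_t=\Phi(\mathscr{U}_t)$ and $\tilde{\bbQ}_t=\Phi(\tilde{\mathscr{U}}_t)$ for the \emph{same} measurable $\Phi$, since the two families differ only in which drift field is plugged in and the enhancement is a measurable function of the drift. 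Now Proposition~\ref{prop011105-23}, after multiplication by $-1$, reads $\mathscr{U}_t\stackrel{\mathrm{law}}{=}\tilde{\mathscr{U}}_t$ in $C([0,t],C^{-\alpha}(\bT))$; applying the measurable map $v\mapsto(v,\Phi(v))$ to both sides gives $(\mathscr{U}_t,\bbQ_t)=(\mathscr{U}_t,\Phi(\mathscr{U}_t))\stackrel{\mathrm{law}}{=}(\tilde{\mathscr{U}}_t,\Phi(\tilde{\mathscr{U}}_t))=(\tilde{\mathscr{U}}_t,\tilde{\bbQ}_t)$, which is \eqref{e.identitylaw}. For the forward diffusion $\tilde{\cY}^\eps$ one can also bypass the abstract theory: by Lemma~\ref{lm012104-23}, $\tilde{\bbQ}_t^\eps$ is, up to averaging the starting point over $m_\bT$, the mollified directed-polymer path measure, whose Radon--Nikodym density against Wiener measure converges through the chaos expansion, in line with Lemma~\ref{c.contildeg}.

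The main obstacle is the singular-diffusion input. The drift sits at the critical spatial regularity $C^{-1/2-}$ and is genuinely rough in time, so one must check that the quoted well-posedness and stability results apply (or adapt them), construct the enhanced Burgers drift and prove convergence of its mollifications, and --- most delicately --- obtain the \emph{quenched}, realization-wise weak convergence rather than mere convergence in probability, since it is the per-realization identities $\bbQ_t=\Phi(\mathscr{U}_t)$ and $\tilde{\bbQ}_t=\Phi(\tilde{\mathscr{U}}_t)$ that carry the anti-symmetry across. Uniqueness in law of the limiting martingale problems is precisely what makes $\Phi$ well defined and single-valued; without it the transfer step in the previous paragraph would collapse.
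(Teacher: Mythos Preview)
Your proposal is correct and follows essentially the same approach as the paper: invoke the singular-diffusion theory of \cite{DD16,CC18} to obtain almost-sure quenched weak convergence of $\bbQ_t^\eps$ and $\tilde{\bbQ}_t^\eps$, observe that the resulting limits are deterministic (measurable) functionals of the respective drift fields $\mathscr{U}_t$ and $\tilde{\mathscr{U}}_t$, and then push the time-reversal anti-symmetry $\mathscr{U}_t\stackrel{\mathrm{law}}{=}\tilde{\mathscr{U}}_t$ forward through this functional to get \eqref{e.identitylaw}. The paper's proof is terser---it directly cites \cite[Theorem 31]{DD16} for the $\tilde{\bbQ}_t^\eps$ convergence and sketches the rest---while you spell out more of the underlying machinery, but the architecture is the same.
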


The proof of Proposition~\ref{p.sdiff} will be presented in Section~\ref{s.sd} of the Appendix. We will first use it to complete the proof of \eqref{e.confinite}, which is divided into several steps. 

\begin{proof}[Proof of \eqref{e.confinite}]
Throughout the proof, fix $n\geq1$ and $x_1,\ldots,x_n\in\bT$. Let $\{\omega_s\}_{s\in[0,t]}$ be the canonical process of $C([0,t],\bT)$.

\emph{Step 1.} The goal is to show that under $\tilde{\bbQ}_t$, the random variable $\omega_t$ has the density given by $\tilde{g}(t,\cdot)$ defined in \eqref{e.deftildeg}: for any $a<b\in\bT$, we have 
\begin{equation}\label{e.densityomegat}
\tilde{\bbQ}_t(\omega_t\in [a,b])=\int_a^b \tilde{g}(t,x)dx.
\end{equation}
First, by definition we have $\tilde{\bbQ}_t^\eps(\omega_t\in[a,b])=\int_a^b \tilde{g}^\eps(t;t,x)dx$. Then, by Lemma~\ref{c.contildeg},  $\tilde{g}^\eps(t;t,\cdot)\to \tilde{g}(t,\cdot)$ in $C(\bT)$ in probability. Thus, upon extracting a subsequence, we have $\tilde{g}^\eps(t;t,\cdot)\to \tilde{g}(t,\cdot)$ in $C(\bT)$ almost surely. By Proposition~\ref{p.sdiff},  we have $\tilde{\bbQ}_t^\eps\Rightarrow \tilde{\bbQ}_t$ almost surely, which implies that, $\omega_t$ under $\tilde{\bbQ}_t^\eps$ converges in distribution to $\omega_t$ under $\tilde{\bbQ}_t$ (again, for almost every realization of $\zeta$). By the convergence of the density $\tilde{g}^\eps(t;t,\cdot)$ to the density $\tilde{g}(t,\cdot)$, this further implies that $\omega_t$ under $\tilde{\bbQ}_t$ has density given by $\tilde{g}(t,\cdot)$, hence completing the proof of \eqref{e.densityomegat}.

\emph{Step 2.} The goal is to show that for any $i=1,\ldots,n$,
\begin{equation}\label{e.conXi}
\begin{aligned}
(X_i^\eps,Y_i^\eps)&=(h^\eps(t,x_i)-h^\eps(t,0),\phi^\eps(t,x_i)-\phi^\eps(t,0))\\
&\to (h(t,x_i)-h(t,0),\bbQ_t(\omega_t\in[0,x_i])-x_i)=:(\bar{X}_i,\bar{Y}_i)
\end{aligned}
\end{equation}
in probability. Since $g^\eps=1+\nabla\phi^\eps$, we first write 
\[
\phi^\eps(t,x_i)-\phi^\eps(t,0)=\int_0^{x_i}g^\eps(t,y)dy-x_i.
\]
Recall that $g^\eps$ solves the Fokker-Planck equation \eqref{e.eqgeps1},
with $\cY_s^\eps$ the underlying diffusion given by \eqref{e.rwre1}. By the definition of $\bbQ_t^\eps$, we can write the integral on the r.h.s. of the above equation as 
\[
\int_0^{x_i}g^\eps(t,y)dy=\bbQ_t^\eps(\omega_t \in[0,x_i]).
\]
From \eqref{e.densityomegat}, we know that,  for any $x\in\bT$, 
\begin{equation}\label{e.claim1}
\tilde{\bbQ}_t(\omega_t=x)=0, 
\end{equation}
which implies that $\bbQ_t(\omega_t=x)=0$, because we have $\bbQ_t\stackrel{\text{law}}{=}\tilde{\bbQ}_t$ by Proposition~\ref{p.sdiff}. Thus, by the convergence of $\bbQ_t^\eps\Rightarrow \bbQ_t$, we can pass to the limit and obtain that, almost surely, 
\[
\begin{aligned}
\phi^\eps(t,x_i)-\phi^\eps(t,0)&=\bbQ_t^\eps(\omega_t \in[0,x_i])-x_i\\
&\to \bbQ_t(\omega_t\in[0,x_i])-x_i.
\end{aligned}
\]
This completes the proof of \eqref{e.conXi}.

\emph{Step 3.} The goal is to show that 
\begin{equation}\label{e.equallawXY}
(\bar{X}_1,\ldots,\bar{X}_n,\bar{Y}_1,\ldots,\bar{Y}_n)\stackrel{\text{law}}{=}(X_1,\ldots,X_n,Y_1,\ldots,Y_n),
\end{equation}
which is the last piece we need to complete the proof of \eqref{e.confinite}. To show \eqref{e.equallawXY}, we first rewrite $X_i,Y_i$ as  (by \eqref{e.defXepsYeps} and \eqref{e.deftildephi})
\begin{equation}\label{e.XYi}
\begin{aligned}
&X_i=-\beta W(x_i)=-\int_0^{x_i}u(0,y)dy, \\
&Y_i=\tilde{\phi}(t,x_i)=\int_0^{x_i}\tilde{g}(t,y)dy-x_i=\tilde{\bbQ}_t(\omega_t\in[0,x_i])-x_i,
\end{aligned}
\end{equation}
where the last ``$=$'' comes from \eqref{e.densityomegat}. Then we recall the definition of $\bar{X}_i,\bar{Y}_i$:
\begin{equation}\label{e.barXYi}
\begin{aligned}
&\bar{X}_i=h(t,x_i)-h(t,0)=\int_0^{x_i}u(t,y)dy,\\
&\bar{Y}_i=\bbQ_t(\omega_t\in[0,x_i])-x_i.
\end{aligned}
\end{equation}
If we compare \eqref{e.XYi} and \eqref{e.barXYi}, then \eqref{e.equallawXY} is a direct consequence of \eqref{e.identitylaw}.
\end{proof}

\section{A variance formula}
\label{s.var}

In this section, we combine the results obtained in the previous sections to derive a formula of $\sigma^2(\beta)$. Recall that in Section~\ref{s.co}, we have obtained the following expression
\begin{equation}\label{e.Xco1}
X_t=\int_{\R} x\rho(t,x)dx=\beta \int_0^t\int_{\bT} \EE[I_{1,t}(s,y)-I_{2,t}(s,y)|\F_s] \xi(s,y)dyds,
\end{equation}
with
\begin{equation}\label{e.i1i21}
I_{1,t}(s,y)-I_{2,t}(s,y)=\rho_{\rm m}(t,-|s,y)\int_{\bT}[\psi(s,y')-\psi(s,y)]\rho_{\rm m}(t,-|s,y')dy',
\end{equation}
and $\rho_{\rm m}(t,-|s,y)$ and $\psi(s,y)$ given by, cf \eqref{e.defrho} and \eqref{e.defW},
\begin{equation}\label{e.rhompsi}
\begin{aligned}
&\rho_{\rm m}(t,-|s,y)=\frac{ \G_{t,s}(-,y) \G_{s,0}(y,0)}{\G_{t,0}(-,0)}=\frac{\rhob(t,m_\bT;s,y)\rhof(s,y;0,0)}{\int_{\bT}\rhob(t,m_\bT;s,y')\rhof(s,y';0,0)dy'},\\
&\psi(s,y)=\frac{\sum_{n\in\Z} (n-y)\cZ_{s,0}(y,n)}{\G_{s,0}(y,0)}.
\end{aligned}
\end{equation}
Here we recall that $m_\bT$ denotes the Lebesgue measure on $\bT$. Obviously, the random function $\psi(s,\cdot)$ is $\F_s-$measurable. The factor $\rho_{\rm m}(t,-|s,y)$ depends on both the forward and backward polymer endpoint densities, and 
it is clear that $\rhof(s,y;0, 0)$  is $\F_s-$measurable and $\rhob(t,m_\bT;s,y)$ is independent of $\F_s$. In other words, to compute the conditional expectation on the r.h.s. of \eqref{e.Xco1}, we only need to average out the factor $\rhob$. 

This section contains a series of (rather standard) technical estimates which enable us to replace $\rho_{\rm m}(t,-|s,y)$ and $\psi(s,y)$ by their ``equilibrium'' versions. We briefly sketch them below so that the readers could have a big picture before going to the details. First, for $s\gg1$ and $t-s\gg1$, by the exponential mixing of the polymer endpoint density, one can replace the $\rho_{\rm f}$ and $\rho_{\rm b}$ in \eqref{e.rhompsi} by their stationary counterparts, and the $\rho_{\rm f}$ factor will only depends on $h(s,\cdot)-h(s,0)$, where we recall that $h$ solves the KPZ equation started at stationarity. Secondly, by Proposition~\ref{p.exerr}, one can replace $\psi(s,y')-\psi(s,y)$ by $\phi(s,y')-\phi(s,y)$. After these approximations, what will appear in the Clark-Ocone representation only involves $\nabla h(s,\cdot)$ and $\nabla\phi(s,\cdot)$, since we will first average out the $\rhob$ factor. Now we make use of the key identity  \eqref{e.equallaw} to compute the expectation of the resulting functional of $(\nabla h(s,\cdot),\nabla\phi(s,\cdot))$, and this leads to an explicit expression of $\sigma^2(\beta)$.

\subsection{Stationary approximation}
%
%
Recall that $h$ was defined in \eqref{e.defZW}, which is the solution to the KPZ equation started from stationary initial data $\beta W$. As we will have multiple independent Brownian bridges later, for notational convenience, we let $W_1=W$.
 Define the ``approximate'' midpoint density (as an approximation of $\rho_{\rm m}(t,-|s,y)$)
 \begin{equation}
 \begin{aligned}
 \rhoa(t,-|s,y)&=\frac{\rhob(t,\rho_2;s,y)e^{h(s,y)}}{\int_{\bT}\rhob(t,\rho_2;s,y')e^{h(s,y')}dy'}\\
 &=\frac{\rhob(t,\rho_2;s,y)\rhof(s,y;0,\rho_1)}{\int_{\bT}\rhob(t,\rho_2;s,y')\rhof(s,y';0,\rho_1)dy'},
 \end{aligned}
 \end{equation}
 with 
 \begin{equation}
   \label{rho-i}
 \rho_i(y)=\frac{e^{\beta W_i(y)}}{\int_{\bT}e^{\beta W_i(y')}dy'}, \quad i=1,2,
 \end{equation}
and $W_2$ is a Brownian bridge  independent of $\xi$ and $W_1$. 
Compare the expressions of $\rhoa$ and $\rho_{\rm m}$, the difference lies in the prescribed distribution of the starting and ending points. By the invariance of the law of $\rho_1,\rho_2$ under
the forward/backward polymer endpoint evolution, we actually have that for each $t\geq s>0$, 
 \begin{equation}\label{e.lawrhow}
 \{\rhoa(t,-|s,y)\}_{y\in\bT}\stackrel{\text{law}}{=}\left\{\frac{ e^{\beta (W_1(y)+W_2(y))}}{\int_{\bT}e^{\beta (W_1(y')+W_2(y'))dy'}}\right\}_{y\in\bT},
 \end{equation}
 where, by the elementary properties of the Brownian bridge, the r.h.s. is statistically invariant under rotation on a torus (see e.g. \cite[Lemma 4.2]{ADYGTK22}).

  For $s\gg1$ and $t-s\gg1$, by the exponential mixing of the endpoint
  distribution of directed polymers, see \cite[Theorem 2.3]{GK21},
  one may expect that $\rho_{\rm m}(t,-|s,y)$, see 
  \eqref{e.rhompsi}, is close to $\rhoa(t,-|s,y)$.  
   This is the contents of the following lemma.
 \begin{lemma}\label{l.rhotilderho}
For any $p\in[1,\infty)$, there exist $C,\lambda>0$ such that 
\begin{equation}
  \label{061705-23}
\sup_{y\in\bT} \E_{W_1}\E_{W_2}\EE |\rho_{\rm m}(t,-|s,y)-\rhoa(t,-|s,y)|^p \leq C(e^{-\lambda(t-s)}+ e^{-\lambda s})
\end{equation}
for all $t\geq 100$ and $s\in[1,t-1]$.
\end{lemma}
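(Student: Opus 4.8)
The plan is to reduce \eqref{061705-23} to the exponential mixing of the forward and backward endpoint densities of the directed polymer on the cylinder, together with standard positive and negative moment bounds for the periodic propagator $\G$. Observe first that both densities have the common form
\[
\rho_{\rm m}(t,-|s,y)=\frac{A(y)B(y)}{\int_\bT A(y')B(y')\,dy'},\qquad \rhoa(t,-|s,y)=\frac{\tilde A(y)\tilde B(y)}{\int_\bT \tilde A(y')\tilde B(y')\,dy'},
\]
where $A=\rhob(t,m_\bT;s,\cdot)$ and $\tilde A=\rhob(t,\rho_2;s,\cdot)$ depend only on $\big(\xi|_{[s,t]},W_2\big)$, while $B=\rhof(s,\cdot;0,\delta_0)$ and $\tilde B=\rhof(s,\cdot;0,\rho_1)$ depend only on $\big(\xi|_{[0,s]},W_1\big)$; since $\xi$ is white in time these two blocks, together with $W_1$ and $W_2$, are mutually independent. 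I would then record the elementary identity (all integrals in $y'$ over $\bT$)
\[
\frac{AB}{\int AB}-\frac{\tilde A\tilde B}{\int \tilde A\tilde B}=\frac{(A-\tilde A)B+\tilde A(B-\tilde B)}{\int AB}-\frac{\tilde A\tilde B}{\int AB\cdot\int\tilde A\tilde B}\,\int\!\big((A-\tilde A)B+\tilde A(B-\tilde B)\big),
\]
so that, by Hölder's inequality in $\Omega$ (and the independence above, which lets one factor the relevant expectations), \eqref{061705-23} follows from three ingredients, each valid for every $q\in[1,\infty)$: (a) $L^q(\Omega)$-bounds for $A,\tilde A,B,\tilde B$ and for $\sup_\bT A,\sup_\bT\tilde A,\sup_\bT B,\sup_\bT\tilde B$; (b) $L^q(\Omega)$-bounds for $\big(\int_\bT AB\big)^{-1}$ and $\big(\int_\bT\tilde A\tilde B\big)^{-1}$; (c) the decay estimates $\E_{W_2}\EE\,(\sup_\bT|A-\tilde A|)^q\le C e^{-\lambda(t-s)}$ and $\E_{W_1}\EE\,(\sup_\bT|B-\tilde B|)^q\le C e^{-\lambda s}$.

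For (a) and (b): since $\rhob(t,\mu;s,\cdot)$ and $\rhof(s,\cdot;0,\nu)$ are probability densities on $\bT$, one has the pointwise two-sided bounds $\inf_{\bT^2}\G_{t,s}/\sup_{\bT^2}\G_{t,s}\le \rhob(t,\mu;s,y)\le \sup_{\bT^2}\G_{t,s}/\inf_{\bT^2}\G_{t,s}$, uniformly in $\mu$, and likewise for $\rhof$ in terms of $\G_{s,0}$. Composing the unit-time kernel estimate (fact (i) in the proof of Lemma~\ref{l.mixeta}, i.e. \cite[Lemma 4.1]{GK21}) over the intervals $[s,t]$ and $[0,s]$ — here $t-s\ge1$ and $s\ge1$ are used, together with standard heat-kernel composition bounds and \cite[Lemma B.2]{GK21} to absorb the fractional parts — yields $\EE(\sup_{\bT^2}\G)^q+\EE(\inf_{\bT^2}\G)^{-q}\le C_q$ on these intervals, giving all of (a). For (b), note $\int_\bT A(y')B(y')\,dy'\ge \inf_\bT A$ because $\int_\bT B(y')\,dy'=1$, hence $\big(\int AB\big)^{-q}\le(\inf_\bT A)^{-q}$, which has finite expectation by the same kernel bounds; and symmetrically for $\tilde A\tilde B$.

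For (c): condition on $W_1$, so that $\rho_1$ becomes a fixed element of $\mathcal M_1(\bT)$, and invoke the exponential mixing of the forward endpoint density (Proposition~\ref{p.tk1}; cf. fact (ii) in the proof of Lemma~\ref{l.mixeta}), namely $\EE\sup_{x\in\bT}|\rhof(s,x;0,\nu_1)-\rhof(s,x;0,\nu_2)|^q\le C e^{-\lambda s}$ for $s\ge10$, uniformly over $\nu_1,\nu_2\in\mathcal M_1(\bT)$; applying this with $\nu_1=\delta_0$, $\nu_2=\rho_1$ and integrating over $W_1$ gives the second bound in (c). The first bound is obtained identically from the analogous mixing statement for $\rhob$, applied with $\mu$ equal to $m_\bT$ and to $\rho_2$ — this backward estimate may be taken directly from \cite[Theorem 2.3]{GK21}, or deduced from the forward one via the law-equality $\cZ_{t,s}(x,y)\stackrel{\rm law}{=}\cZ_{t,s}(y,x)$ (cf. \eqref{010905-23}). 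When $s\in[1,10)$ or $t-s\in[1,10)$ the right-hand side of \eqref{061705-23} is bounded below by a positive constant, so in those regimes the moment bounds from (a)–(b) alone suffice. Plugging (a)–(c) into the ratio identity, applying Hölder (splitting each term so that one factor carries an $L^q$-small difference and the remaining factors carry bounded positive or negative moments), and using $t\ge100$, $s\in[1,t-1]$ to exhaust all cases, then yields \eqref{061705-23}.

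The proof is not conceptually hard; the main obstacle is bookkeeping. One must match each decaying factor to the correct time window — the factor $e^{-\lambda s}$ comes from the forward evolution on $[0,s]$ forgetting its initial law ($\delta_0$ versus $\rho_1$), while $e^{-\lambda(t-s)}$ comes from the backward evolution on $[s,t]$ forgetting its terminal law ($m_\bT$ versus $\rho_2$) — and one must be sure that every positive and negative moment used in the Hölder splitting is uniform over the four measures $\delta_0,\rho_1,m_\bT,\rho_2$, in particular over the Dirac mass $\delta_0$; this last requirement is exactly what forces the hypothesis $s\ge1$, so that the forward density has already regularized by time $s$.
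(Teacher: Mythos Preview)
Your approach is the same as the paper's: write both midpoint densities as $AB/\int AB$ with $A=\rhob$, $B=\rhof$, invoke exponential mixing of the forward and backward endpoint densities to control $\sup|A-\tilde A|$ and $\sup|B-\tilde B|$ (your (c), which is exactly the paper's use of \eqref{051705-23} and \eqref{e.mmbdrho-a}), and close with H\"older and moment bounds on the densities. The paper's proof in Appendix~\ref{secA.1} records the same decay estimate for the differences and then simply says ``the rest of the proof is rather standard, with several uses of H\"older inequality together with \eqref{e.mmbdrho}''; your ratio identity and the (a)--(c) splitting are a correct way to unpack that sentence.

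There is one slip in your derivation of (a)--(b). The claim that $\EE(\sup_{\bT^2}\G_{t,s})^q+\EE(\inf_{\bT^2}\G_{t,s})^{-q}\le C_q$ uniformly in $t-s\ge1$ is false: the propagator $\G_{t,s}$ carries a Lyapunov exponent, so its positive and negative moments are not bounded uniformly in the interval length, and ``composing the unit-time kernel estimate'' over $[s,t]$ only gives constants of order $C^{t-s}$. What you actually need---and what survives---is the moment bound on the \emph{ratio}, i.e.\ on the densities $\rhof,\rhob$ themselves. This is precisely \eqref{e.mmbdrho} in Proposition~\ref{p.tk1}, whose proof uses the Markov property to write $\rhof(t,x;0,\nu)=\rhof(t,x;t-\tfrac12,\mu)$ for some probability density $\mu$ and then bounds by $\sup\G_{t,t-1/2}\cdot\sup\G_{t,t-1/2}^{-1}$, reducing to a single half-unit step. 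Replace your long-interval $\G$ bounds by a direct citation of \eqref{e.mmbdrho} (and its backward analogue via \eqref{e.mmbdrho-a}) and the argument goes through exactly as you wrote it.
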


 The proof is rather standard given the exponential mixing property
 formulated in Proposition~\ref{p.tk1} below. We postpone  it till the
 Appendix \ref{secA.1}.
  
 Define 
 \begin{equation}\label{e.defJt}
 J_t(s,y):=\rhoa(t,-|s,y)\int_{\bT}[\phi(s,y')-\phi(s,y)]\rhoa(t,-|s,y')dy',
 \end{equation}
 and
 \begin{equation}
 \label{Yt}
 Y_t:=\beta\int_0^t \int_{\bT}\E_{W_2}\EE[J_t(s,y)|\F_s]\xi(s,y)dyds.
\end{equation}
The $J_t(s,y)$ should be viewed as a ``stationary''
approximation of $I_{1,t}(s,y)-I_{2,t}(s,y)$, with $\rho_{\rm m}$ replaced by
$\rhoa$ and $\psi$ by $\phi$.   We emphasize that
$Y_t$ only depends on the noise $\xi$ and the Brownian bridge $W_1$.
The  It\^o integral with respect to $\xi$ in \eqref{Yt} is
performed for a  fixed realization of $W_1$.

The main result of the present section is the following lemma, which implies that $\tfrac{X_t-Y_t}{\sqrt{t}}\to0$ as $t\to\infty$:
\begin{lemma}\label{l.XYt}
 We have
$\sup_{t\ge 100}\E_{W_1}\EE |X_t-Y_t|^2 <\infty$.
\end{lemma}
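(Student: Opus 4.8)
The plan is to exploit the fact that both $X_t$ and $Y_t$ are It\^o integrals against the white noise $\xi$ and then estimate the integrands. The integrand of $X_t$ is $\F_s$-measurable, that of $Y_t$ is $\F_s\vee\sigma(W_1)$-measurable, and $W_1$ is independent of $\xi$, so $\xi$ remains a space-time white noise with respect to the enlarged filtration and the It\^o isometry applies to the difference:
\[
\E_{W_1}\EE|X_t-Y_t|^2=\beta^2\int_0^t\!\int_{\bT}\E_{W_1}\EE\big|\EE[I_{1,t}(s,y)-I_{2,t}(s,y)\,|\,\F_s]-\E_{W_2}\EE[J_t(s,y)\,|\,\F_s]\big|^2\,dy\,ds.
\]
It thus suffices to bound the integrand by a function of $s$ whose integral over $[0,t]$ is uniformly bounded. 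I would split the $s$-integral into the bulk $s\in[1,t-1]$ and the two boundary strips $s\in[0,1]$ and $s\in[t-1,t]$, handling them differently.

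\emph{The bulk.} Here I introduce the intermediate quantity $K_t(s,y):=\rho_{\rm m}(t,-|s,y)\int_{\bT}[\phi(s,y')-\phi(s,y)]\rho_{\rm m}(t,-|s,y')\,dy'$, obtained from $I_{1,t}-I_{2,t}$ by replacing $\psi$ with $\phi$, and write the argument of the integrand as $\EE[I_{1,t}-I_{2,t}-K_t\,|\,\F_s]+\E_{W_2}\EE[K_t-J_t\,|\,\F_s]$. For the first term one removes the conditional expectation by Jensen, applies Cauchy--Schwarz in the $y'$-integral (using $\int_{\bT}\rho_{\rm m}(t,-|s,y')dy'=1$) and then H\"older in $(\xi,W_1)$; this reduces matters to the $s,y$-uniform $L^p(\Omega)$ bounds on $\rho_{\rm m}(t,-|s,\cdot)$ (valid for $s,t-s\geq1$ via the forward/backward representation in \eqref{e.rhompsi} and standard moment estimates) and to Proposition~\ref{p.exerr}, which supplies the factor $e^{-\lambda s}$ controlling $|[\psi(s,y')-\psi(s,y)]-[\phi(s,y')-\phi(s,y)]|$. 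The second term is treated identically, now using Lemma~\ref{l.rhotilderho} to control $|\rho_{\rm m}(t,-|s,\cdot)-\rhoa(t,-|s,\cdot)|$ (yielding $e^{-\lambda(t-s)}+e^{-\lambda s}$), the uniform $L^p$ bounds on $\rho_{\rm m}$ and $\rhoa$, and Corollary~\ref{cor011205-23} for the increments $\phi(s,y')-\phi(s,y)=\cW(s;\delta_{y'},\rho_{W_1})-\cW(s;\delta_y,\rho_{W_1})$. Altogether the bulk integrand is $\leq C(e^{-\lambda s}+e^{-\lambda(t-s)})$, whose integral over $[1,t-1]$ is bounded uniformly in $t$.

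\emph{The boundary strips.} Here I would just bound the integrand by $2\EE|I_{1,t}-I_{2,t}|^2+2\E_{W_1}\E_{W_2}\EE|J_t|^2$. The $J_t$-term is $\leq C$ uniformly in $(s,y,t)$: by \eqref{e.lawrhow} the law of $\rhoa(t,-|s,\cdot)$ does not depend on $s,t$ and has all moments, while $\phi(s,y')-\phi(s,y)$ has uniform $L^p$ bounds by Corollary~\ref{cor011205-23}, so Cauchy--Schwarz/H\"older closes it. For $\EE|I_{1,t}-I_{2,t}|^2$: when $s\in[t-1,t]$ one still has uniform $L^p$ bounds on $\rho_{\rm m}(t,-|s,\cdot)$ (for $t-s\leq1$ the backward density $\rhob(t,m_\bT;s,\cdot)$ is close to the constant $1$) and one uses Corollary~\ref{cor011205-23} for $\psi(s,y')-\psi(s,y)=\cW(s;\delta_{y'},\delta_0)-\cW(s;\delta_y,\delta_0)$, giving $\leq C$; when $s\in[0,1]$ one uses Lemma~\ref{l.mmestimate} for the $L^p$ bounds on $\psi(s,\cdot)$ and accepts that $\rho_{\rm m}(t,-|s,y)$ is only controlled by a heat kernel, $\|\rho_{\rm m}(t,-|s,y)\|_{L^{q}(\Omega)}\lesssim G_s(y)$, which follows from $\rho_{\rm m}(t,-|s,y)\leq\frac{\G_{t,s}(-,y)}{\inf_z\G_{t,s}(-,z)}\cdot\frac{\G_{s,0}(y,0)}{\G_{s,0}(-,0)}$, the independence of $\xi$ on $[0,s]$ and $[s,t]$, \eqref{e.positivemm1}, and the negative moments of $\G_{s,0}(-,0)$; the resulting singularity is integrable since $\int_0^1\!\int_{\bT}G_s(y)^2\,dy\,ds\sim\int_0^1 s^{-1/2}\,ds<\infty$. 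Summing the three contributions yields $\sup_{t\geq100}\E_{W_1}\EE|X_t-Y_t|^2<\infty$.

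The genuinely delicate (though routine) point is the last one: near $(s,y)=(0,0)$ the midpoint density $\rho_{\rm m}(t,-|s,y)$ is singular, while near $s=t$ the winding number $\psi(s,y)$ is of diffusive size $\sqrt{s}$, so in each regime one must pair the right moment estimate — the integrable heat-kernel bound and Lemma~\ref{l.mmestimate} near $s=0$, the \emph{difference} structure of $\psi$ and $\phi$ via Corollary~\ref{cor011205-23} near $s=t$ — with the right decay input (Proposition~\ref{p.exerr} and Lemma~\ref{l.rhotilderho}) in the bulk. The main obstacle is keeping this bookkeeping straight rather than any single hard estimate.
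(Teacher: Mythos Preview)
Your proof is correct and follows essentially the same approach as the paper's: It\^o isometry, a split into bulk $[1,t-1]$ and boundary strips, exponential decay in the bulk via Proposition~\ref{p.exerr} and Lemma~\ref{l.rhotilderho}, and direct moment bounds on the boundary with the integrable heat-kernel singularity $\int_{\bT}G_s(y)^2dy\sim s^{-1/2}$ near $s=0$. The only cosmetic difference is that the paper packages the bulk replacement as a single three-term decomposition $E_1+E_2+E_3$ of $I_{1,t}-I_{2,t}-J_t$, whereas you route through the intermediate quantity $K_t$; the underlying estimates are identical.
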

\begin{proof}
 \blue{First, we note that $X_t$ does not depend on $W_1$.} Fix any $t\geq100$. The proof consists of several steps.

\emph{Step 1, uniform bounds}. By \eqref{e.psiphiW} and Corollary \ref{cor011205-23}, we have for any $p\geq1$, there exits a constant $C>0$ such that 
\begin{equation}\label{e.511}
\begin{aligned}
\sup_{s\geq0,y,y'\in\bT}\left\{\EE |\psi(s,y')-\psi(s,y)|^p+\E_{W_1}\EE |\phi(s,y')-\phi(s,y)|^p\right\} \leq C.
\end{aligned}
\end{equation}
All constants $C$ appearing below may depend on $p\in[1,+\infty)$, but
are independent of other parameters such as $t,s$ etc.
In addition, by \eqref{e.lawrhow}, we have 
\begin{equation}\label{e.bdtilderho}
\sup_{s\in[0,t],y\in\bT}\E_{W_1}\E_{W_2}\EE\rhoa(t,-|s,y)^p\leq C.
\end{equation}
 Combining estimates \eqref{e.511} and \eqref{e.bdtilderho}, we conclude that
\begin{equation}\label{e.514}
\sup_{s\in(0,t],y\in\bT} \E_{W_1}\E_{W_2}\EE |J_t(s,y)|^p \leq C.
\end{equation}

Next, we derive an estimate on $I_{1,t}-I_{2,t}$. First, for $\rho_{\rm m}(t,-|s,y)$, 
starting from the expression in \eqref{e.rhompsi}, by the H\"older estimate together with \eqref{e.mmbdrho} and
\eqref{e.mmbdrho-a}, we conclude that
\begin{align*}
\EE \rho_{\rm m}(t,-|s,y)^p \leq C, \quad\quad \mbox{ for } s\in[1,t-1],y\in\bT.
\end{align*}
For other values of $s$, we will use the following estimates: (i) for $s\in (t-1,t]$, we have  
\begin{equation}
  \label{011305-23}
\sup_{s\in(t-1,t],y\in\bT} \EE[\rhob(t,m_\bT;s,y)^p +\rhob(t,m_\bT;s,y)^{-p}] \leq C,
\end{equation}
and (ii) for $s\in(0,1)$, we have
\begin{equation}
  \label{021305-23}
  \begin{aligned}
     \EE [\rhof(s,y;0,0)^p] \leq CG_s(y)^p,\quad\quad \EE [\rhof(s,y;0,0)^{-p}] \leq CG_s(y)^{-p}.
\end{aligned}
\end{equation}
Here we recall that $G$ is the heat kernel on torus. 

 Suppose now that $s\in[0,1)$. Then we have
\[
\rho_{\rm m}(t,-|s,y)\le \rho_{\rm f}(s,y;0,0) \sup_{y'\in\bT}\rho_{\rm
  b}(t,m_{\bT};s,y') \sup_{y'\in\bT}\rho^{-1}_{\rm
  b}(t,m_{\bT};s,y').
\]
On the other hand, when $s\in(t-1,t]$, then 
\[
\rho_{\rm m}(t,-|s,y)\le \rho_{\rm f}(s,y;0,0) \sup_{y'\in\bT}\rho_{\rm
  b}(t,m_{\bT};s,y') \sup_{y'\in\bT} \rho_{\rm f}^{-1}(s,y';0,0).
\]
Using again H\"older inequality 
 together with \eqref{011305-23} and
\eqref{021305-23}, we
  derive that 
\begin{equation}\label{e.513}
\EE \rho_{\rm m}(t,-|s,y)^p \leq
C(G_s(y)^p1_{s\in(0,1)}+1_{s\in(1,t]}), \quad\quad 0< s\le t,\,  y\in\bT.
\end{equation}
Hence,
from \eqref{e.i1i21}, \eqref{e.511} and \eqref{e.513},  we
conclude that, for all $s\in(0,t], y\in\bT$, 
  \begin{equation}\label{e.514-a}
\EE| I_{1,t}(s,y)-I_{2,t}(s,y)|^p  \leq
C\big(G_s(y)^p1_{s\in(0,1)}+1_{s\in(1,t]}\big).
\end{equation}


\emph{Step 2, approximation.}  We claim that there exist $C,\lambda>0$
such that
\begin{equation}\label{e.515}
\sup_{y\in\bT}\E_{W_1}\E_{W_2}\EE |I_{1,t}(s,y)-I_{2,t}(s,y)-J_t(s,y)|^p \leq C\big(e^{-\lambda s}+e^{-\lambda(t-s)}\big)
\end{equation}
for  all  $s\in[1,t-1]$.

Indeed, using \eqref{e.i1i21} and \eqref{e.defJt}, we can decompose the difference as
$$
I_{1,t}(s,y)-I_{2,t}(s,y)-J_t(s,y)=\sum_{i=1}^3 E_i,
$$ with 
\[
\begin{aligned}
&E_1=\rho_{\rm m}(t,-|s,y)\int_{\bT}[\psi(s,y')-\psi(s,y)-\phi(s,y')+\phi(s,y)]\rho_{\rm m}(t,-|s,y')dy',\\
&E_2=(\rho_{\rm m}(t,-|s,y)-\rhoa(t,-;s,y))\int_{\bT}[\phi(s,y')-\phi(s,y)]\rho_{\rm m}(t,-|s,y')dy',\\
&E_3=\rhoa(t,-|s,y)\int_{\bT}[\phi(s,y')-\phi(s,y)](\rho_{\rm m}(t,-|s,y')-\rhoa(t,-|s,y'))dy'.
\end{aligned}
\]
Now we apply Proposition~\ref{p.exerr} and  \eqref{e.513} 
to conclude that for $s\in[1,t-1]$,
\[
  \E_{W_1}\EE|E_1|^p\leq C e^{-\lambda s},
\]
From Lemma~\ref{l.rhotilderho}, \eqref{e.511} and  \eqref{e.513},  we
obtain that 
\[
  \E_{W_1}\E_{W_2}\EE |E_2|^p \leq C(e^{-\lambda(t-s)}+e^{-\lambda
    s}),
  \]
Finally from Lemma~\ref{l.rhotilderho}, 
\eqref{e.511} and \eqref{e.bdtilderho}, we get
\[
  \E_{W_1}\E_{W_2}\EE |E_3|^p \leq C(e^{-\lambda(t-s)}+e^{-\lambda s}).
\]
This completes the proof of \eqref{e.515}.

\emph{Step 3, conclusion.} We have (see \eqref{e.Xco1} and  \eqref{Yt})
\[
X_t-Y_t=\beta \int_0^t \int_{\bT}\E_{W_2}\EE [I_{1,t}(s,y)-I_{2,t}(s,y)-J_t(s,y)|\F_s]\xi(s,y)dyds.
\]
For each realization of $W_1$, applying It\^o isometry, we have 
\[
\EE |X_t-Y_t|^2=\beta^2\int_0^t\int_{\bT} \EE \left[\big( \E_{W_2}\EE [I_{1,t}(s,y)-I_{2,t}(s,y)-J_t(s,y)|\F_s]\big)^2\right] dyds.
\]
Taking expectation on $W_1$ and using the Jensen inequality, we  obtain 
\[
\E_{W_1}\EE|X_t-Y_t|^2 \leq \beta^2 \int_0^t\int_{\bT} \E_{W_1}\E_{W_2}\EE \big[|I_{1,t}(s,y)-I_{2,t}(s,y)-J_t(s,y)|^2\big] dyds.
\]
Applying \eqref{e.514} and \eqref{e.514-a} to the integral in $s\in(0,1]\cup [t-1,t]$ and \eqref{e.515} to the integral in $s\in[1,t-1]$, we complete the proof.
\end{proof}

\subsection{It\^o isometry and time reversal} By Lemma~\ref{l.XYt}, to
derive the limit of $t^{-1}\EE X_t^2$, it suffices to do the same for
$\E_{W_1}\EE Y_t^2/t$. Recall that $Y_t$ and $J_t(s,y)$ are given by
\eqref{Yt} and \eqref{e.defJt} respectively  and  $W_1$, $W_2$ are
  two  
independent Brownian bridges satisfying  
$W_j(0)=W_j(1)=0$, $j=1,2$.

 Before applying It\^o isometry to compute the variance, there is one more simplification we need to deal with the conditional expectation. 
 Define 
\begin{equation}\label{e.defrhow}
\rho_{W}(s,y):=\frac{e^{\beta W_2(y)+h(s,y)}}{\int_{\bT} e^{\beta W_2(y')+h(s,y')}dy'}=\frac{e^{\beta W_2(y)+h(s,y)-h(s,0)}}{\int_{\bT} e^{\beta W_2(y')+h(s,y')-h(s,0)}dy'},
\end{equation}
where we recall that $h(s,y) =\log Z_{W_1}(s,y)$ (cf \eqref{e.defZW}),
and define
\begin{equation}\label{e.defJW}
J_{W}(s,y):=\rho_{W}(s,y)\int_{\bT}[\phi(s,y')-\phi(s,y)]\rho_{W}(s,y')dy',
\end{equation}
where $\phi$ is defined by \eqref{phi}:
\[
\phi(t,x)=\frac{\int_{\R} \cZ_{t,0}(x,x')(x'-x)e^{\beta W_1(x')}dx'}{\int_{\R} \cZ_{t,0}(x,x') e^{\beta W_1(x')}dx'}.
\]
We claim that
\begin{equation}\label{e.516}
\E_{W_2}\EE [J_t(s,y)|\F_s]=\E_{W_2}J_{W}(s,y).
\end{equation}
First, since  
both $h(s,\cdot)$ and $\phi(s,\cdot)$ are $\F_s-$measurable, we 
note that the right hand side is also $\F_s-$measurable. Secondly, recall that 
\[
\rhoa(t,-|s,y)=\frac{\rhob(t,\rho_2;s,y)e^{h(s,y)}}{\int_{\bT}\rhob(t,\rho_2;s,y')e^{h(s,y')}dy'},
\]
and we have, for any $t>s$ fixed, 
\[
\left\{\rhob(t,\rho_2;s,y)\right\}_{y\in\bT}\stackrel{\text{law}}{=}\left\{\frac{e^{\beta W_2(y)}}{\int_{\bT}e^{\beta W_2(y')}dy'}\right\}_{y\in\bT}.
\]
 Since $\rhob(t,\rho_2;s,y)$ depends only on the noise
$\{\xi(\ell,x)\}_{(\ell,x)\in[s,t]\times \R}$ and the bridge $W_2$,  we can
write that
\[
\begin{aligned}
&\E_{W_2}\EE [\rhoa(t,-|s,y) \rhoa(t,-|s,y') |\F_s]\\
&=\E_{W_2}[\rho_{W}(s,y)
\rho_{W}(s,y')],\quad y,y'\in\bT.
\end{aligned}
\]
Due to $\F_s-$measurability of 
both $h(s,\cdot)$ and $\phi(s,\cdot)$, the above
  yields \eqref{e.516}, which further implies 
  \[
  Y_t=\beta\int_0^t \int_{\bT}\E_{W_2}[J_W(s,y)]\xi(s,y)dyds.
  \]

By It\^o isometry (applied for each realization of $W_1$), we have 
\begin{equation}\label{e.ito1}
\E_{W_1}\EE Y_t^2=\beta^2\int_0^t\int_{\bT} \E_{W_1}\EE
\left[\Big(\E_{W_2} [J_{W}(s,y)]\Big)^2\right] dyds.
\end{equation}
From the expression of $J_W(s,y)$ in \eqref{e.defJW}, we
see that in order to compute the  expectation on the r.h.s., one
needs to understand the   distribution of
\[
\Big(h(s,\cdot)-h(s,0),\phi(s,\cdot)-\phi(s,0)\Big).
\] This is where the time reversal anti-symmetry (see \eqref{e.burgersAsym}) and Proposition~\ref{p.keyP} come into play. Define 
\begin{equation}\label{e.tildeJ}
\tilde{J}_{W}(s,y):=\tilde{\rho}_{W}(y)\int_{\bT}[\tilde{\phi}(s,y')-\tilde{\phi}(s,y)]\tilde{\rho}_{W}(y')dy',
\end{equation} 
where $\tilde{\phi}$ was defined in \eqref{e.deftildephi} and 
\begin{equation}\label{e.tilderho}
\tilde{\rho}_{W}(y):=\frac{ e^{\beta W_2(y)} e^{-\beta W_1(y)}}{\int_{\bT}e^{\beta W_2(y')}e^{-\beta W_1(y')}dy'}.
\end{equation}
We have the following key proposition:
 \begin{proposition}\label{p.Appreversal}
  For any $s>0, y\in\bT$, we have
 \begin{equation}
    \label{011605-23}
    J_{W}(s,y) \stackrel{\rm law}{=}\tilde{J}_{W}(s,y).
  \end{equation}
  In consequence,
 \begin{equation}
    \label{031705-23}
\E_{W_1}\EE \big[\left(\E_{W_2} [J_{W}(s,y)]\right)^2\big]=\E_{W_1}\EE \big[\left(\E_{W_2} [\tilde{J}_{W}(s,y)]\right)^2\big].
\end{equation}
\end{proposition}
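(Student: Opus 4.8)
The plan is to realize $J_W(s,\cdot)$ and $\tilde J_W(s,\cdot)$ as the image of \emph{one and the same} measurable map, applied to an independent Brownian bridge together with a pair of random continuous functions whose joint laws are matched by Proposition~\ref{p.keyP}, and then to transport the equality in law through that map. For $w\in C(\bT)$ and $f,g\in C(\bT)$ put
\[
\rho[w,f](y)=\frac{e^{\beta w(y)+f(y)}}{\int_{\bT}e^{\beta w(y')+f(y')}\,dy'},
\qquad
\Psi_y(w,f,g)=\rho[w,f](y)\int_{\bT}\big(g(y')-g(y)\big)\,\rho[w,f](y')\,dy'.
\]
The map $(w,f,g)\mapsto\Psi_y(w,f,g)$ is continuous on $C(\bT)^3$, hence Borel, and $\Psi_y$ depends on $f$ only through $f-f(0)$ and on $g$ only through its increments. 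Comparing with \eqref{e.defrhow}, \eqref{e.defJW}, \eqref{e.tilderho} and \eqref{e.tildeJ} one gets, for every fixed $s>0$ and $y\in\bT$, the representations
\[
J_W(s,y)=\Psi_y\!\big(W_2,\ h(s,\cdot)-h(s,0),\ \phi(s,\cdot)-\phi(s,0)\big),
\qquad
\tilde J_W(s,y)=\Psi_y\!\big(W_2,\ -\beta W_1(\cdot),\ \tilde\phi(s,\cdot)\big).
\]

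Next I would record the independence structure. By construction $W_2$ is independent of $(\xi,W_1)$, whereas the two pairs
\[
P_1:=\big(h(s,\cdot)-h(s,0),\ \phi(s,\cdot)-\phi(s,0)\big),
\qquad
P_2:=\big(-\beta W_1(\cdot),\ \tilde\phi(s,\cdot)\big)
\]
are both measurable functionals of $(\xi,W_1)$ only; hence $W_2$ is independent of $P_1$ and of $P_2$. Applying Proposition~\ref{p.keyP} with $t=s$ gives $P_1\stackrel{\rm law}{=}P_2$ as random elements of $C(\bT)\times C(\bT)$, and combining this with the independence just noted yields $(W_2,P_1)\stackrel{\rm law}{=}(W_2,P_2)$. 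Pushing this equality forward through the Borel map $(w,(f,g))\mapsto\Psi_y(w,f,g)$ gives $J_W(s,y)\stackrel{\rm law}{=}\tilde J_W(s,y)$, which is \eqref{011605-23}.

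For \eqref{031705-23} I would average out $W_2$ before squaring. Set $\mathcal G_y(f,g):=\E_{W_2}\big[\Psi_y(W_2,f,g)\big]$ for deterministic $f,g\in C(\bT)$; this is finite because the bridge $W_2$ has exponential moments and $g$ is bounded, and it is Borel in $(f,g)$ by Fubini. Conditioning on $(\xi,W_1)$ and using that $W_2$ is independent of $P_1$ and of $P_2$ gives $\E_{W_2}[J_W(s,y)]=\mathcal G_y(P_1)$ and $\E_{W_2}[\tilde J_W(s,y)]=\mathcal G_y(P_2)$, with the \emph{same} function $\mathcal G_y$. Squaring and taking $\E_{W_1}\EE$ — which is exactly the expectation over the randomness $(\xi,W_1)$ generating $P_1$ and $P_2$ — and invoking $P_1\stackrel{\rm law}{=}P_2$ with the nonnegative functional $(f,g)\mapsto\mathcal G_y(f,g)^2$ yields
\begin{align*}
\E_{W_1}\EE\big[(\E_{W_2}[J_W(s,y)])^2\big]
&=\E_{W_1}\EE\big[\mathcal G_y(P_1)^2\big]
=\E_{W_1}\EE\big[\mathcal G_y(P_2)^2\big]\\
&=\E_{W_1}\EE\big[(\E_{W_2}[\tilde J_W(s,y)])^2\big],
\end{align*}
that is, \eqref{031705-23}; finiteness of all the expectations follows from the uniform moment bounds for the increments of $\phi$ in Corollary~\ref{cor011205-23} and for $\rho_W,\tilde\rho_W$ (cf.\ \eqref{e.bdtilderho}).

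The argument has no genuine analytic obstacle left: all the substance — the time-reversal anti-symmetry \eqref{e.burgersAsym}, the convergence of the singular diffusions (Proposition~\ref{p.sdiff}), and the identification of $\tilde\phi$ as the displacement of the ``environment-seen-from-the-particle'' density — is already packaged in Proposition~\ref{p.keyP}. The only care needed is bookkeeping: checking that $J_W$ and $\tilde J_W$ are literally the same functional of the independent bridge $W_2$ and of the increment pairs $P_1,P_2$, that $\Psi_y$ sees only the relevant increments (so the choice of base point in $h(s,\cdot)-h(s,0)$, $\phi(s,\cdot)-\phi(s,0)$ is immaterial), and that $W_2$ is independent of $P_1,P_2$, so that the equality in law of $P_1$ and $P_2$ transfers without loss.
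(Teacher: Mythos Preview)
Your proof is correct and follows essentially the same approach as the paper: both recognize $J_W(s,y)$ and $\tilde J_W(s,y)$ as the same measurable functional of $W_2$ together with the increment pair $(h(s,\cdot)-h(s,0),\phi(s,\cdot)-\phi(s,0))$, respectively $(-\beta W_1,\tilde\phi(s,\cdot))$, and then invoke \eqref{e.equallaw} from Proposition~\ref{p.keyP}. Your version is more explicit about the independence of $W_2$ from $(\xi,W_1)$ and about the conditioning step needed for \eqref{031705-23}, which the paper leaves implicit.
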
 

\begin{proof}
By \eqref{e.defrhow} and \eqref{e.defJW}, one can write 
\begin{equation}\label{e.661}
\begin{aligned}
J_W(s,y)=&\frac{e^{\beta W_2(y)+h(s,y)-h(s,0)}}{\int_{\bT} e^{\beta W_2(y')+h(s,y')-h(s,0)}dy'}\\
&\times \int_{\bT}[\phi(s,y')-\phi(s,y)]\frac{e^{\beta W_2(y')+h(s,y')-h(s,0)}}{\int_{\bT} e^{\beta W_2(y'')+h(s,y'')-h(s,0)}dy''}dy'.
\end{aligned}
\end{equation}
By \eqref{e.equallaw}, we know that 
\[
(h(s,\cdot)-h(s,0),\phi(s,\cdot)-\phi(s,0))\stackrel{\text{law}}{=}(-\beta W_1(\cdot),\tilde{\phi}(s,\cdot)).
\]
If we replace $h(s,y)-h(s,0)$ by $-\beta W_1(\cdot)$ and $\phi$ by $\tilde{\phi}$ in \eqref{e.661}, we have $J_W(s,y)$ becomes $\tilde{J}_W(s,y)$. This completes the proof.
\end{proof}

\subsection{Stationary approximation again} In the expression of $\tilde{J}_W$ defined in \eqref{e.tildeJ}, there is still an $s-$dependence through the function $\tilde{\phi}(s,\cdot)$.  The last step is to use a stationary approximation of $\tilde{\phi}(s,\cdot)$ for large $s\gg1$. By the definition of $\tilde{\phi}$ in \eqref{e.deftildephi}, we have
\[
\tilde{\phi}(s,y')-\tilde{\phi}(s,y)=\int_y^{y'} [\tilde{g}(s,z)-1]dz,
\]
with (cf \eqref{e.deftildeg})
\[
\tilde{g}(s,z)=\int_{\bT} \left(\frac{\rhob(s, x;0,z)e^{\beta
      W_1(z)}}{\int_\bT\rhob(s, x;0,z')e^{\beta W_1(z')}dz'}\right) dx , \quad\quad z\in \bT.
\]
For $s\gg1$ and any $x\in\bT$, we expect the backward polymer endpoint density $\rhob(s,x;0,\cdot)$ to stabilize and reach
stationarity.

This inspires us to define
\begin{equation}\label{e.defginfinity}
\tilde{g}_{\mathrm{app}}(s,z):=\frac{\rhob(s,\rho_3;0,z)e^{\beta W_1(z)}}{\int_{\bT} \rhob(s,\rho_3;0,z')e^{\beta W_1(z')}dz'}, \quad\quad s\geq0, z\in\bT,
\end{equation}
where
$$
\rho_3(z)=\frac{e^{\beta W_3(z)}}{\int_{\bT} e^{\beta W_3(z')}dz'},
$$ and $W_3$ is another Brownian bridge independent of
$W_1,W_2,\xi$.

By the same argument as used in the proof of
Lemma~\ref{l.rhotilderho}, see Appendix \ref{secA.1}, we have: for any $p\in[1,\infty)$, there exist
$C,\lambda>0$ such that  
\begin{equation}
  \label{011705-23}
\sup_{z\in\bT}\E_{W_1}\E_{W_3}\EE|\tilde{g}(s,z)-\tildegapp(s,z)|^p \leq C e^{-\lambda s}, \quad\quad s\geq1.
\end{equation}

Again, by the invariance of the law of $\rho_3$ under
the backward polymer endpoint evolution, it is clear that for each
$s\geq0$, we have 
  \begin{equation}\label{e.laststa}
     \begin{aligned}
       &\left\{\Big(\tildegapp(s,z),
\tilde{\rho}_{W}(z')\Big)\right\}_{(z,z')\in\bT^2} \stackrel{\text{law}}{=}
\left\{\Big(\tildegapp(0,z) ,
\tilde{\rho}_{W}(z')\Big)\right\}_{(z,z')\in\bT^2}\\
&
=\left\{\left(\frac{e^{\beta W_3(z)+\beta W_1(z)}}{\int_{\bT} e^{\beta W_3(z'')+\beta W_1(z'')}dz''}, \frac{ e^{\beta W_2(z')} e^{-\beta W_1(z')}}{\int_{\bT}e^{\beta W_2(z'')}e^{-\beta W_1(z'')}dz''}\right)\right\}_{(z,z')\in\bT^2}.
\end{aligned}
\end{equation}

Define 
\begin{equation}\label{e.defJinfinity}
\tilde{J}_{W,\mathrm{app}}(s,y):=\tilde{\rho}_{W}(y)\int_{\bT}\left(\int_y^{y'}[\tildegapp(s,z)-1]dz\right)\tilde{\rho}_{W}(y')dy'.
\end{equation}
By \eqref{e.laststa}, we know that 
\begin{equation}\label{e.66inlaw}
\left\{\tilde{J}_{W,\mathrm{app}}(s,y)\right\}_{y\in\bT}\stackrel{\text{law}}{=}\left\{\tilde{J}_{W,\mathrm{app}}(0,y)\right\}_{y\in\bT},
\end{equation}
and from the definitions of $\tilde{\rho}_{W}$  and $\tilde{g}_{\mathrm{app}}$, it is clear that $\tilde{J}_{W,\mathrm{app}}(0,\cdot)$ only involves the three independent Brownian bridges $\{W_i\}_{i=1,2,3}$.

The following lemma is the last piece we need to compute the variance:
\begin{lemma}\label{l.AppSta}
There exist $C,\lambda>0$ such that
\[
\sup_{y\in\bT} \E_{W}\EE\big[ |
\tilde{J}_{W}(s,y)-\tilde{J}_{W,\mathrm{app}}(s,y)|^2\big]\leq C
e^{-\lambda s},\quad s\ge1.
\]
\end{lemma}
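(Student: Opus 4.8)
The plan is to prove Lemma~\ref{l.AppSta} by the same bootstrap argument already used for Lemma~\ref{l.rhotilderho}, reducing the $L^2$ difference of $\tilde J_W$ and $\tilde J_{W,\mathrm{app}}$ to the $L^p$ difference $\tilde g(s,\cdot)-\tildegapp(s,\cdot)$ controlled in \eqref{e.011705-23}, together with uniform moment bounds on $\tilde\rho_W$.

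\begin{proof}[Proof of Lemma~\ref{l.AppSta}]
Fix $s\ge1$ and $y\in\bT$. By \eqref{e.tildeJ} and \eqref{e.defJinfinity}, we may write the difference as a sum of two terms,
\[
\tilde J_W(s,y)-\tilde J_{W,\mathrm{app}}(s,y)=\tilde\rho_W(y)\int_{\bT}\left(\int_y^{y'}[\tilde g(s,z)-\tildegapp(s,z)]dz\right)\tilde\rho_W(y')dy',
\]
so that, since $|\tilde\rho_W(y)|\le\tilde\rho_W(y)$ and $\int_\bT\tilde\rho_W(y')dy'=1$,
\[
|\tilde J_W(s,y)-\tilde J_{W,\mathrm{app}}(s,y)|\le \tilde\rho_W(y)\sup_{z\in\bT}|\tilde g(s,z)-\tildegapp(s,z)|.
\]
It remains to control the $L^2$ norm of the right-hand side. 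By the definition \eqref{e.tilderho} of $\tilde\rho_W$, H\"older's inequality gives, for any $q\in(1,\infty)$,
\[
\E_W\EE\big[\tilde\rho_W(y)^2\sup_{z}|\tilde g(s,z)-\tildegapp(s,z)|^2\big]\le
\big(\E_W\EE\,\tilde\rho_W(y)^{2q'}\big)^{1/q'}\big(\E_W\EE\sup_{z}|\tilde g(s,z)-\tildegapp(s,z)|^{2q}\big)^{1/q},
\]
with $q'=q/(q-1)$. The first factor is bounded uniformly in $s,y$: since $W_1,W_2,W_3$ are independent and $e^{\pm\beta W_i}$ has Gaussian tails (moments of all orders of $\int_\bT e^{\pm\beta W_i}$ and its reciprocal are finite by standard Brownian bridge estimates, see e.g.\ the bounds used for \eqref{e.lawrhow} and \eqref{e.laststa}), all moments of $\tilde\rho_W(y)$ are bounded uniformly in $y\in\bT$. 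The second factor is $\le Ce^{-\lambda s}$ by \eqref{e.011705-23} applied with the exponent $2q\in[1,\infty)$, after upgrading the pointwise-in-$z$ bound there to a uniform-in-$z$ bound via a standard Kolmogorov continuity / chaining argument (the map $z\mapsto \tilde g(s,z)-\tildegapp(s,z)$ is a smooth functional of the SHE solution and its spatial H\"older norm has moments of all orders, so the supremum over $\bT$ costs only a constant factor). Combining the two bounds yields
\[
\sup_{y\in\bT}\E_W\EE\big[|\tilde J_W(s,y)-\tilde J_{W,\mathrm{app}}(s,y)|^2\big]\le Ce^{-\lambda s/q},
\]
which, after renaming $\lambda/q$ as $\lambda$, is the claimed estimate.
\end{proof}

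The only genuine point requiring care is the promotion of the pointwise estimate \eqref{e.011705-23} to one uniform in $z\in\bT$; this is routine given the regularity of the relevant SHE functionals (the same issue arises, and is handled, in the proofs of Lemma~\ref{l.rhotilderho} and Proposition~\ref{p.exerr}), but it is where essentially all of the work sits. Everything else is H\"older's inequality and the Gaussian integrability of exponential functionals of Brownian bridges.
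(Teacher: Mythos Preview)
Your decomposition of $\tilde J_W(s,y)-\tilde J_{W,\mathrm{app}}(s,y)$ is exactly the one in the paper, and the overall structure (H\"older to split off $\tilde\rho_W(y)$, then invoke \eqref{011705-23}) is the same. The one divergence is that you bound the inner integral by $\sup_{z\in\bT}|\tilde g(s,z)-\tildegapp(s,z)|$, which forces you to upgrade the pointwise-in-$z$ estimate \eqref{011705-23} to a uniform-in-$z$ one via a Kolmogorov/chaining argument. This is doable but unnecessary: the paper simply keeps the integral form. Concretely, bounding
\[
\Big|\int_y^{y'}[\tilde g(s,z)-\tildegapp(s,z)]dz\Big|\le \int_{\bT}|\tilde g(s,z)-\tildegapp(s,z)|\,dz
\]
and using $\int_{\bT}\tilde\rho_W(y')dy'=1$, one gets
\[
|\tilde J_W(s,y)-\tilde J_{W,\mathrm{app}}(s,y)|\le \tilde\rho_W(y)\int_{\bT}|\tilde g(s,z)-\tildegapp(s,z)|\,dz.
\]
After H\"older as you do, Jensen's inequality on the $dz$-integral followed by Fubini gives
\[
\E_W\EE\Big(\int_{\bT}|\tilde g(s,z)-\tildegapp(s,z)|\,dz\Big)^{2q}\le \int_{\bT}\E_W\EE|\tilde g(s,z)-\tildegapp(s,z)|^{2q}\,dz\le Ce^{-\lambda s},
\]
and \eqref{011705-23} applies directly with no supremum needed. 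So the ``only genuine point requiring care'' you flag is an artifact of your chosen bound, not of the problem; your proof is correct modulo that step, but the paper's route is cleaner.
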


\begin{proof}
From the expressions of $\tilde{J}_{W}(s,y)$ and $\tildeJapp(s,y)$ in \eqref{e.tildeJ} and \eqref{e.defJinfinity}, we have 
\[
\begin{aligned}
&\tilde{J}_{W}(s,y)-\tildeJapp(s,y)\\
&=\tilde{\rho}_{W}(y)\int_{\bT}\left(\int_y^{y'}[\tilde{g}(s,z)-\tildegapp(s,z)]dz\right)\tilde{\rho}_{W}(y')dy'.
\end{aligned}
\]
It suffices to apply \eqref{011705-23} together with the H\"older inequality to complete the proof.
\end{proof}

A corollary of the above lemma is 
\begin{corollary}
We have 
\begin{equation}
  \label{021705-23}
\lim_{t\to\infty} \frac{\EE X_t^2}{t}=\lim_{t\to\infty} \frac{\E_{W_1}\EE Y_t^2}{t}=\beta^2 \int_{\bT}\E_{W_1} \E_{W_3} \Big[\big(\E_{W_2} [\tildeJapp(0,y)]\big)^2\Big] dy.
\end{equation}
\end{corollary}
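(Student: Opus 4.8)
The plan is to read off the corollary from the four structural inputs already in place: Lemma~\ref{l.XYt} (which lets us replace $X_t$ by the stationarized $Y_t$), the It\^o isometry \eqref{e.ito1}, Proposition~\ref{p.Appreversal} (the time‑reversal identity \eqref{031705-23}), and Lemma~\ref{l.AppSta} together with the stationarity relations \eqref{e.66inlaw}–\eqref{e.laststa}. Nothing beyond bookkeeping with nested expectations, Cauchy–Schwarz, a Ces\`aro averaging, and the exponential‑mixing estimates already proved is needed.

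\emph{First equality.} By \eqref{e.devar} and \eqref{e.exqvar} we have $\EE X_t^2=V_t-t$, so \eqref{e.varcon} already guarantees that $\lim_{t\to\infty}\EE X_t^2/t$ exists and, in particular, $\EE X_t^2\le Ct$ for $t$ large. Since $X_t$ does not depend on $W_1$, I would write
\[
\EE X_t^2-\E_{W_1}\EE Y_t^2=\E_{W_1}\EE\big[(X_t-Y_t)(X_t+Y_t)\big],
\]
and bound the right‑hand side by Cauchy–Schwarz using Lemma~\ref{l.XYt}: the factor $\E_{W_1}\EE|X_t-Y_t|^2$ is bounded uniformly in $t$, while $\E_{W_1}\EE|X_t+Y_t|^2\le 4\EE X_t^2+4\E_{W_1}\EE|X_t-Y_t|^2\le C t$. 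Hence $|\EE X_t^2-\E_{W_1}\EE Y_t^2|\le C\sqrt t$, and dividing by $t$ gives the first equality (and the existence of $\lim_t\E_{W_1}\EE Y_t^2/t$).

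\emph{Second equality.} I would start from \eqref{e.ito1},
\[
\E_{W_1}\EE Y_t^2=\beta^2\int_0^t\!\!\int_{\bT}\E_{W_1}\EE\big[(\E_{W_2}[J_W(s,y)])^2\big]\,dy\,ds,
\]
and set $r(s):=\int_{\bT}\E_{W_1}\EE[(\E_{W_2}[J_W(s,y)])^2]\,dy$, which does not depend on $t$. By Proposition~\ref{p.Appreversal}, $r(s)=\int_{\bT}\E_{W_1}\EE[(\E_{W_2}[\tilde J_W(s,y)])^2]\,dy$. I would first record $\sup_{s\ge0}r(s)<\infty$ (Jensen together with uniform $L^p$ bounds on $J_W(s,y)$, obtained exactly as in \eqref{e.511}–\eqref{e.514}, controlling $\rho_W(s,\cdot)$ via \eqref{e.equallaw}). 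Then Lemma~\ref{l.AppSta} and Cauchy–Schwarz (using those uniform $L^2$ bounds) give
\[
\Big|\,r(s)-\int_{\bT}\E_{W_1}\E_{W_3}\EE\big[(\E_{W_2}[\tildeJapp(s,y)])^2\big]\,dy\,\Big|\le Ce^{-\lambda s/2},\qquad s\ge1.
\]
Finally, because the backward endpoint evolution preserves the law of the Brownian bridge $\rho_3$ \emph{conditionally on $W_1$}, one has $(W_1,\tildegapp(s,\cdot))\stackrel{\rm law}{=}(W_1,\tildegapp(0,\cdot))$ for every $s\ge0$; since $W_2$ is independent of everything else, averaging it out turns $\tildeJapp(s,y)$ into a functional of $(W_1,\tildegapp(s,\cdot))$ alone (with the deterministic kernel $\E_{W_2}[\tilde\rho_W(y)\tilde\rho_W(y')\mid W_1]$), so $\int_{\bT}\E_{W_1}\E_{W_3}\EE[(\E_{W_2}[\tildeJapp(s,y)])^2]\,dy$ is independent of $s$ and equals $r_\infty:=\int_{\bT}\E_{W_1}\E_{W_3}[(\E_{W_2}[\tildeJapp(0,y)])^2]\,dy$ (here $\tildeJapp(0,\cdot)$ carries no $\xi$‑dependence). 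Therefore $r(s)\to r_\infty$, and by the Ces\`aro lemma $\frac1t\int_0^t r(s)\,ds\to r_\infty$, so $\frac1t\E_{W_1}\EE Y_t^2\to\beta^2 r_\infty$, which is the asserted identity.

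The one genuinely delicate point is the last one: checking that $\tildeJapp(s,\cdot)$ keeps an $s$‑independent law \emph{after} the $\E_{W_2}$–average, i.e. that the stationarity in \eqref{e.laststa} holds jointly with $W_1$ (equivalently, that \eqref{e.laststa} may be read with $W_1$ held fixed). Everything else is routine; the only mild care needed is to handle the short‑time slice $s\in(0,1]$ — where Lemma~\ref{l.AppSta} does not apply — by the crude uniform bound $\sup_s r(s)<\infty$ rather than by the mixing estimate, which is harmless inside the Ces\`aro average.
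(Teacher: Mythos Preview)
Your proof is correct and follows essentially the same route as the paper: the first equality via Lemma~\ref{l.XYt} together with the known convergence of $\EE X_t^2/t$ and Cauchy--Schwarz, and the second via the chain \eqref{e.ito1}~$\to$~\eqref{031705-23}~$\to$~Lemma~\ref{l.AppSta}~$\to$~\eqref{e.66inlaw} followed by Ces\`aro averaging. Your extra care about the $s$--independence of the law of $(W_1,\tildegapp(s,\cdot))$ (needed because $\E_{W_2}$ is taken first) is well placed; the paper handles this implicitly through \eqref{e.laststa}, whose proof---the invariance of $\rho_3$ under the backward evolution, independently of $W_1$---is exactly the joint statement you spell out.
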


\begin{proof}
Recall that we already know from \cite[Section 5]{YGTK22} that $\EE
X_t^2/t$ converges as $t\to\infty$, so, the first  equality in
\eqref{021705-23}  is a consequence of Lemma~\ref{l.XYt}. For the
second  equality, we have from \eqref{e.ito1} and \eqref{031705-23}
\[
\begin{aligned}
\E_{W_1} \EE Y_t^2&=\beta^2\int_0^t\int_{\bT} \E_{W_1}\EE \big[\left(\E_{W_2} [J_{W}(s,y)]\right)^2\big] dyds\\
&=\beta^2\int_0^t\int_{\bT} \E_{W_1}\EE \big[\left(\E_{W_2} [\tilde{J}_{W}(s,y)]\right)^2\big]dyds.
\end{aligned}
\]
Applying Lemma~\ref{l.AppSta} and \eqref{e.66inlaw}, we complete the proof.
\end{proof}

By \eqref{e.varcon}, \eqref{e.exqvar} and the above corollary, we
conclude that for any $\beta>0$ we have
\begin{equation}\label{e.varre1}
\sigma^2(\beta)= 1+\beta^2 \int_{\bT}\E_{W_1}\E_{W_3} \left[\big(\E_{W_2} [\tildeJapp(0,y)]\big)^2\right] dy.
\end{equation}

To write the effective diffusivity more explicitly, we have the following lemma, using which we complete the proof of Theorem~\ref{t.mainth}.
\begin{lemma}
The process $\{\tildeJapp(0,y)\}_{y\in\bT}$ is stationary. As a result, we have 
\begin{equation}
  \label{041705-23}
\sigma^2(\beta)=1+\beta^2\E_{W_1}\E_{W_3}\big[\left(\E_{W_2} [\tildeJapp(0,0)]\right)^2\big],
\end{equation}
where
\[
 \E_{W_2}\tilde{J}_{W,\mathrm{app}}(0,0)
    =\int_{\bT^2} \Xi(\beta,y,W_1)
    \left(\frac{e^{\beta W_1(z)+\beta W_3(z)}}{\int_{\bT}
    e^{\beta W_1(z')+\beta
      W_3(z')}dz'}-1\right)1_{[0,y]}(z)dydz,
      \]
      with
      \[
\Xi(\beta,y,W_1)=\E_{W_2}\left[\frac{ e^{\beta W_2(y) -\beta
    W_1(y)}}{\Big(\int_{\bT}e^{\beta W_2(y')}e^{-\beta W_1(y')}dy'\Big)^2}\right].
\]
\end{lemma}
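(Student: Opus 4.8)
The plan is to obtain the stationarity of $\{\tildeJapp(0,y)\}_{y\in\bT}$ — and the stronger statement that $y\mapsto\E_{W_1}\E_{W_3}[(\E_{W_2}[\tildeJapp(0,y)])^2]$ is constant — purely from the rotational invariance of the Brownian bridge on the torus. The fact I would invoke is elementary (compare \cite[Lemma 4.2]{ADYGTK22}): if $W$ is a Brownian bridge connecting $(0,0)$ and $(1,0)$, periodically extended to $\R$, then for each fixed $a\in\bT$ the shifted process $\widehat W(\cdot):=W(\cdot+a)-W(a)$ is again such a bridge, with the same law; applying this simultaneously to the three \emph{independent} bridges gives $(\widehat W_1,\widehat W_2,\widehat W_3)\stackrel{\rm law}{=}(W_1,W_2,W_3)$. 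The essential point is that all three must be rotated by the same $a$, since $W_1$ enters both $\tildegapp$ and $\tilde\rho_W$.

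The first step is to compute how $\tildeJapp(0,\cdot)$ transforms under this rotation. Writing $F(z)=\beta W_1(z)+\beta W_3(z)$ and $G(y)=\beta W_2(y)-\beta W_1(y)$, we have $\tildegapp(0,z)=e^{F(z)}/\int_\bT e^{F(z')}dz'$ and $\tilde\rho_W(y)=e^{G(y)}/\int_\bT e^{G(y')}dy'$ (using \eqref{e.laststa}). Replacing $W_i$ by $\widehat W_i$ turns $F$ into $F(\cdot+a)-F(a)$ and $G$ into $G(\cdot+a)-G(a)$, and the additive constants $F(a),G(a)$ drop out of the normalized ratios. After the changes of variables $z\mapsto z+a$ and $y'\mapsto y'+a$ — legitimate on $\bT$ precisely because $\tildegapp(0,\cdot)-1$ integrates to $0$, which is what makes $\int_y^{y'}[\tildegapp(0,z)-1]\,dz$ a well-defined function of $(y,y')\in\bT^2$ — I would arrive at
\[
\widehat J(0,y)=\tildeJapp(0,y+a),\qquad y\in\bT,
\]
where $\widehat J(0,y)$ denotes the right-hand side of \eqref{e.defJinfinity} with every $W_i$ replaced by $\widehat W_i$. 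Since $(\widehat W_1,\widehat W_2,\widehat W_3)\stackrel{\rm law}{=}(W_1,W_2,W_3)$ and $\widehat J$ is the same functional of the three bridges, this gives $\{\tildeJapp(0,y+a)\}_{y\in\bT}\stackrel{\rm law}{=}\{\tildeJapp(0,y)\}_{y\in\bT}$ for every $a$, i.e.\ stationarity of the process.

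Next I would upgrade this to the statement needed for \eqref{e.varre1}. Because the rotation acts coordinatewise, integrating out $W_2$ commutes with it: putting $h(W_1,W_3,y):=\E_{W_2}[\tildeJapp(0,y)]$, one has $\E_{W_2}[\widehat J(0,y)]=h(\widehat W_1,\widehat W_3,y)$, and since $(\widehat W_1,\widehat W_3)\stackrel{\rm law}{=}(W_1,W_3)$,
\[
\E_{W_1}\E_{W_3}\big[(\E_{W_2}[\tildeJapp(0,y+a)])^2\big]=\E_{W_1}\E_{W_3}\big[h(\widehat W_1,\widehat W_3,y)^2\big]=\E_{W_1}\E_{W_3}\big[h(W_1,W_3,y)^2\big],
\]
so $y\mapsto\E_{W_1}\E_{W_3}[(\E_{W_2}[\tildeJapp(0,y)])^2]$ is constant, and \eqref{e.varre1} collapses to \eqref{041705-23}. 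For the explicit formula I would then specialize to $y=0$: since $W_1(0)=W_2(0)=0$ we get $\tilde\rho_W(0)=\big(\int_\bT e^{\beta W_2-\beta W_1}\big)^{-1}$, hence $\tilde\rho_W(0)\tilde\rho_W(y')=e^{\beta W_2(y')-\beta W_1(y')}/\big(\int_\bT e^{\beta W_2(y'')-\beta W_1(y'')}dy''\big)^2$; as $W_2$ is independent of $(W_1,W_3)$ and $\tildegapp(0,\cdot)$ depends only on $(W_1,W_3)$, the $W_2$-expectation factors through the $dy'$-integral and is recognized as $\Xi(\beta,y',W_1)$, leaving exactly $\int_{\bT^2}\Xi(\beta,y,W_1)\big(\tildegapp(0,z)-1\big)1_{[0,y]}(z)\,dy\,dz$ after substituting the formula for $\tildegapp(0,z)$.

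The only place that requires genuine care — and hence the "main obstacle," though it is bookkeeping rather than a real difficulty — is the displayed transformation identity: one must check that $\int_y^{y'}[\tildegapp(0,z)-1]\,dz$ is unambiguous on $\bT$ (the zero-mean property), that it transforms into $\int_{y+a}^{y'+a}[\tildegapp(0,w)-1]\,dw$, and that the substitutions in the $dz$- and $dy'$-integrals are consistent with the periodic extensions of $\tildegapp$ and $\tilde\rho_W$. There is no analytic subtlety and no limiting argument involved: this is an exact symmetry of Gaussian bridges. With \eqref{041705-23} established, Theorem~\ref{t.mainth} follows.
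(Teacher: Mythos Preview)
Your proposal is correct and follows essentially the same route as the paper: both arguments rest on the rotational invariance of the Brownian bridges, rewrite $\tildeJapp(0,y+a)$ via the changes of variables $y'\mapsto y'+a$, $z\mapsto z+a$ (using that $\int_y^{y'}[\tildegapp(0,z)-1]\,dz$ is $1$-periodic in $y'$ because $\tildegapp(0,\cdot)$ is a density), and then invoke the joint stationarity of $(\tildegapp(0,\cdot),\tilde\rho_W(\cdot))$ recorded in \eqref{e.laststa}. Your write-up is in fact more complete than the paper's: you spell out why taking $\E_{W_2}$ commutes with the rotation (so that constancy of the integrand in \eqref{e.varre1} follows), and you carry out the specialization to $y=0$ that yields the explicit formula for $\E_{W_2}\tildeJapp(0,0)$, both of which the paper leaves implicit.
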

\proof
By \eqref{e.defginfinity}, we know that $\tildegapp(s,\cdot)$ is a continuous density on $\bT$, so 
\[
y'\mapsto \int_{y}^{y'}[\tildegapp(s,z)-1]dz
\]
is a $1$-periodic function for each $y\in\bT$.  For any $x\in\bT$, we can write
\begin{align*}
  &\tilde{J}_{W,\mathrm{app}}(0,y+x)=\tilde{\rho}_{W}(y+x)\int_{\bT}\left(\int_{y+x}^{y'}[\tildegapp(0,z)-1]dz\right)\tilde{\rho}_{W}(y')dy'\\
  &
    =\tilde{\rho}_{W}(y+x)\int_{\bT}\left(\int_{y+x}^{y'+x}[\tildegapp(0,z)-1]dz\right)\tilde{\rho}_{W}(y'+x)dy'\\
  &
    =\tilde{\rho}_{W}(y+x)\int_{\bT}\left(\int_{y}^{y'}[\tildegapp(0,z+x)-1]dz\right)\tilde{\rho}_{W}(y'+x)dy',
\end{align*}
and the conclusion of the lemma follows from the joint stationarity of 
\[
\left\{(\tildegapp(0,z) ,
\tilde{\rho}_{W}(z'))\right\}_{(z,z')\in\bT^2},
\] see \eqref{e.laststa}.
\qed

\appendix

\section{Endpoint distribution of the directed polymer on a cylinder}

\subsection{Exponential mixing of endpoint distribution}

Here we summarize a few results on the exponential mixing of the endpoint distribution of the directed polymer on a torus, which will be used throughout the paper. Recall that $\rhof,\rhob$ were defined in \eqref{e.forwardbackward}. By the time reversal invariance of the spacetime white noise, we have 
\begin{equation}
  \label{e.mmbdrho-a}
\{\rhof(t,x;s,\nu)\}_{x\in\bT}\stackrel{\text{law}}{=}\{\rhob(t,\nu;s,x)\}_{x\in\bT}.
\end{equation}
Here is the main result on $\rhof$. By the above identity, the same conclusion applies to $\rhob$.
\begin{proposition}
\label{p.tk1}
The Markov process $\{\rhof(t,\cdot;s,\nu)\}_{t\geq0}$, taking values
in ${\cal M}_1(\bT)$, has a unique invariant measure given by
the law of the ${\cal M}_1(\bT)$-valued random variable $\rho(y)dy$,
where
\begin{equation}
   \label{rho-i}
 \rho(y)=\frac{e^{\beta W(y)}}{\int_{\bT}e^{\beta W(y')}dy'},
 \end{equation}
 and $W$ is a Brownian bridge satisfying $W(0)=W(1)=0$.

 Furthermore, for any $p\geq 1$, there exist $C,\lambda>0$ such that for all $t\geq1$, 
\begin{equation}
  \label{051705-23}
\EE \sup_{\nu,\nu'\in\mathcal{M}_1(\bT)} \sup_{x\in\bT} |\rhof(t,x;0,\nu)-\rhof(t,x;0,\nu')|^p  \leq Ce^{-\lambda t}, 
\end{equation}
and
\begin{equation}\label{e.mmbdrho}
\EE \sup_{\nu\in \mathcal{M}_1(\bT)}\sup_{x\in\bT}\, \{\rhof(t,x;0,\nu)^p
+\rhof(t,x;0,\nu)^{-p}\} \leq C.
\end{equation} 
\end{proposition}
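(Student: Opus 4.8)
The plan is to establish the proposition in four moves: (i) the Markov property and identification of the claimed invariant measure; (ii) the two-sided moment bound \eqref{e.mmbdrho}; (iii) the contraction estimate \eqref{051705-23}; and (iv) uniqueness of the invariant measure, which will be a soft consequence of (i) and (iii). For (i), the Chapman--Kolmogorov identity $\G_{t,s}(x,y)=\int_{\bT}\G_{t,r}(x,z)\G_{r,s}(z,y)dz$ together with the definition \eqref{e.forwardbackward} gives $\rhof(t,\cdot;s,\nu)=\rhof(t,\cdot;r,\rhof(r,\cdot;s,\nu)\,dz)$ for $s<r<t$, and since $\G_{t,r}$ depends only on the noise on $[r,t]$ this makes $\{\rhof(t,\cdot;s,\nu)\}_{t\ge s}$ a Markov process, homogeneous in law by the time-stationarity of $\xi$. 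To exhibit the invariant measure, take $\nu(dy)=\rho(y)dy$ with $\rho$ as in \eqref{rho-i}, $W$ a Brownian bridge independent of the noise; then $\rhof(t,\cdot;s,\nu)$ is the normalization of the torus SHE solution started from $e^{\beta W}$, whose logarithm is the KPZ solution on $\bT$ started from $\beta W$. Invoking the stationarity of the spatial increment process of the torus KPZ equation (the Brownian bridge being invariant, as in \cite{GK21}), one has $\log Z_W(t,\cdot)-\log Z_W(t,0)\stackrel{\rm law}{=}\beta W$, hence the law of $\rho(y)dy$ is invariant.

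For (ii), I would use a one-step smoothing. For $t\ge1$, let $\mu$ be the probability measure on $\bT$ proportional to $z\mapsto\int_{\bT}\G_{t-1,0}(z,y)\nu(dy)$; then
\[
\rhof(t,x;0,\nu)=\frac{\int_{\bT}\G_{t,t-1}(x,z)\mu(dz)}{\int_{\bT^2}\G_{t,t-1}(x',z)\mu(dz)dx'}\in\Big[\Delta_t^{-1},\,\Delta_t\Big],\qquad \Delta_t:=\frac{\sup_{x,z}\G_{t,t-1}(x,z)}{\inf_{x,z}\G_{t,t-1}(x,z)},
\]
uniformly in $x\in\bT$ and $\nu\in\mathcal M_1(\bT)$, so that both $\rhof(t,x;0,\nu)^p$ and $\rhof(t,x;0,\nu)^{-p}$ are dominated by $\Delta_t^p$; its moments are finite and independent of $t$ by \cite[Lemma 4.1]{GK21} (the bound used as fact (i) in the proof of Lemma~\ref{l.mixeta}) and Cauchy--Schwarz.

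The core is (iii). For each integer $k$, the linear map $K_k:\mu\mapsto\int_{\bT}\G_{k,k-1}(\cdot,z)\mu(dz)$ on the cone of positive measures on $\bT$ is a contraction in the Hilbert projective metric with factor $\theta_k=(\sqrt{\Delta_k}-1)/(\sqrt{\Delta_k}+1)$; since $\G_{k,k-1}$ is a.s.\ bounded above and bounded away from $0$, $\Delta_k<\infty$ a.s., so $\theta_k<1$ a.s., the $\theta_k$ are i.i.d.\ (noise on disjoint unit intervals is independent), and $r:=\EE\theta_1<1$ because $\theta_1\le1$ a.s.\ with $\theta_1<1$ a.s. After the first unit step the two normalized densities are bounded above and below, so their Hilbert distance is finite, and composing over $\lfloor t\rfloor$ further steps multiplies it by $\prod_k\theta_k$, whose expectation is $r^{\lfloor t\rfloor-1}$; converting the Hilbert distance into the sup-distance of the normalized densities (standard, using the lower bound on $\G$ over the last unit interval) gives an $L^1$-in-$\omega$, sup-in-$x$, uniform-in-$(\nu,\nu')$ exponential bound. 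To upgrade $L^1$ to $L^p$ one interpolates this decay against the uniform $L^q$ moment bounds of (ii): since $|\rhof(t,x;0,\nu)-\rhof(t,x;0,\nu')|\le2\Delta_t$ uniformly, H\"older's inequality converts $L^1$ decay at rate $\lambda$ into $L^p$ decay at rate $\lambda/p'$ for a possibly smaller but still positive exponent, which is \eqref{051705-23}. For (iv), \eqref{051705-23} forces $\rhof(t,\cdot;0,\nu)$ and $\rhof(t,\cdot;0,\nu')$ to coincide in the limit for all $\nu,\nu'$, and testing an arbitrary invariant law against the Brownian-bridge law of (i) identifies it; together with (i) this yields existence and uniqueness.

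I expect the quantitative contraction in step (iii) to be the main obstacle: one must make precise the a.s.\ finiteness and integrability of $\Delta_k$ over a unit time window (resting on upper and lower heat-kernel-type bounds for the periodic SHE propagator, cf.\ \cite{GK21}), carefully pass from the projective-metric estimate \emph{in expectation} to an $L^p$ bound \emph{in the sup-norm} uniformly over all $\nu,\nu'\in\mathcal M_1(\bT)$ (including the singular case of two Dirac masses, handled by the one-step regularization), and keep the exponential rate uniform in $t\ge1$.
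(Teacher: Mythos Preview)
Your proposal is correct and aligns with the paper's approach, though you supply considerably more detail: the paper's own proof is almost entirely by citation, deferring the Markov property to \cite[Lemma 2.2]{GK21}, the contraction estimate \eqref{051705-23} to \cite[Proposition 4.7]{GK21}, and proving \eqref{e.mmbdrho} exactly as you do in step (ii) (one-step smoothing over the last half-unit, then bounding by $\sup\G\cdot\sup\G^{-1}$ and invoking \cite[Lemma 4.1]{GK21}). Your step (iii) is effectively a sketch of what lies behind \cite[Proposition 4.7]{GK21}: the Birkhoff--Hilbert contraction of the positive operator $\mu\mapsto\int\G_{k,k-1}(\cdot,z)\mu(dz)$ with i.i.d.\ contraction coefficients $\theta_k<1$, followed by interpolation against the uniform moment bounds to pass from $L^1$ to $L^p$; this is the standard route and there is no gap.
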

\proof
The fact that $\{\rhof(t,\cdot;s,\nu)\}_{t\geq0}$  is Markovian
  follows from  \cite[Lemma 2.2]{GK21}.  Estimate \eqref{051705-23} is an immediate consequence of  \cite[Proposition 4.7]{GK21}.

To show \eqref{e.mmbdrho}, we note that
\begin{align*}
 & \rhof(t,x;0,\nu)=\frac{\int_{\bT}\G_{t,t-1/2}(x,x') \rhof
  (t-1/2,x';0,\nu)dx'}{\int_{\bT^2}\G_{t,t-1/2}(x'',x')  \rhof
  (t-1/2,x';0,\nu)dx'dx''}\\
  &
    \le \sup_{x,x'\in\bT}\G_{t,t-1/2}(x,x')  \sup_{x',x''\in\bT}\G_{t,t-1/2}^{-1}(x'',x').
\end{align*}
Estimate \eqref{e.mmbdrho}  then follows from an application of
\cite[Lemma 4.1]{GK21} and the H\"older inequality.
\qed

\subsection{Proof of Lemma~\ref{l.rhotilderho}} 
\label{secA.1}
Recall that the goal was to show that for $s\in[1,t-1]$, 
\begin{equation}\label{e.66app1}
\sup_{y\in\bT} \E_{W_1}\E_{W_2}\EE |\rho_{\rm m}(t,-|s,y)-\rhoa(t,-|s,y)|^p \leq C(e^{-\lambda(t-s)}+ e^{-\lambda s}),
\end{equation}
with
\[
\begin{aligned}
&\rho_{\rm m}(t,-|s,y)=\frac{\rhob(t,m_\bT;s,y)\rhof(s,y;0,0)}{\int_{\bT}\rhob(t,m_\bT;s,y')\rhof(s,y';0,0)dy'},\\
&\rhoa(t,-|s,y)=\frac{\rhob(t,\rho_2;s,y)\rhof(s,y;0,\rho_1)}{\int_{\bT}\rhob(t,\rho_2;s,y')\rhof(s,y';0,\rho_1)dy'}.
\end{aligned}
\]
By \eqref{e.mmbdrho-a} and \eqref{051705-23}, we have 
\[
\begin{aligned}
&\E_{W_2}\EE \left[\sup_{y\in\bT} |\rhob(t,m_\bT;s,y)-\rhob(t,\rho_2;s,y)|^p\right]\\
&+\E_{W_1}\EE \left[\sup_{y\in\bT}|\rhof(s,y;0,0)-\rhof(s,y;0,\rho_1)|^p\right] \leq C\big(e^{-\lambda(t-s)}+e^{-\lambda s}\big).
\end{aligned}
\]
With the above estimate, the rest of the proof is rather standard, with several uses of H\"older inequality together with \eqref{e.mmbdrho}. We do not repeat it here.

\section{Singular diffusion: proof of Proposition~\ref{p.sdiff}}
\label{s.sd}
Let us first recall the main ideas in Section~\ref{s.scalar} to see the role played by singular diffusion.  
The main argument in Section~\ref{s.scalar} was done on the level of approximation for each $\eps>0$ and only passed to the limit through Proposition~\ref{p.sdiff} -- in the following we will sketch formally the argument for $\eps=0$.

Recall that the goal was to describe the joint distribution of $h(t,\cdot)-h(t,0)$ and $\phi(t,\cdot)-\phi(t,0)$, where $h$ solves the KPZ equation at stationarity, and $\phi(t,x)$, defined in \eqref{e.defphi}, is the displacement of the polymer started at $(t,x)$, running backward in time, with the terminal potential $e^{\beta W(\cdot)}$. By the Girsanov theorem and Feynman-Kac representation, one can interpret $\phi(t,x)$ as the quenched mean displacement of the diffusion in the Burgers drift:
\[
\phi(t,x)=\E_B \cX_t-x,
\]
where $\{\cX_s\}_{s\in[0,t]}$ solves the following formal SDE 
\begin{equation}\label{e.formalsde1}
d\cX_s=u(t-s,\cX_s)ds+dB_s,\quad\quad \cX_0=x.
\end{equation}
Note that \eqref{e.formalsde1}  is only symbolic, since $u=\nabla h$ is the distributional-valued solution to the stochastic Burgers equation. In addition, one can show that $\phi$ solves the following PDE with distribution-valued coefficients
\begin{equation}
\partial_t\phi=\frac12\Delta\phi+u\nabla \phi+u, \quad\quad\phi(0,x)=0,
\end{equation}
which is also symbolic. To study the increment process of $\phi$, it suffices to consider $g=1+\nabla\phi$, which solves the Fokker-Planck equation with a random distribution-valued coefficient
\begin{equation}\label{e.formalfk1}
\partial_s g(s,y)=\frac12\Delta_y g(s,y)+\nabla_y (u(s,y)g(s,y)), \quad\quad g(0,x)=1.
\end{equation}
Since $\nabla h=u$, to study the joint distribution of $h(t,\cdot)-h(t,0)$ and $\phi(t,\cdot)-\phi(t,0)$, the problem reduces to studying the joint distribution of $u(t,\cdot)$ and $g(t,\cdot)$. The above Fokker-Planck equation is related to another diffusion in the Burgers flow:
\begin{equation}\label{e.formalsde2}
d\cY_s=-u(s,\cY_s)ds+dB_s, \quad\quad \cY_0\sim m_\bT.
\end{equation}
Inspired by the time-reversal anti-symmetry of the Burgers flow \eqref{e.burgersAsym}, we consider $\tilde{\cY}_s$ given by 
\begin{equation}\label{e.formalsde3}
\d\tilde{\cY}_s=u(t-s,\tilde{\cY}_s)ds+dB_s,\quad\quad \tilde{\cY}_0\sim m_\bT,
\end{equation}
and its density, denoted by $\tilde{g}(t;s,\cdot)$, evolves according to 
\begin{equation}\label{e.formalfk2}
\partial_s \tilde{g}(t;s,y)=\frac12\Delta_y \tilde{g}(t;s,y)-\nabla_y(u(t-s,y)\tilde{g}(t;s,y)), \quad\quad \tilde{g}(t;0,y)=1.
\end{equation}
Compare the two (formal) Fokker-Planck equations \eqref{e.formalfk1} and \eqref{e.formalfk2}, since the coefficients have the same law, it is natural to expect the solutions to have the same law, and, the joint distributions of the coefficient and the solution are the same as well. This turns out to be precisely the \eqref{e.identitylaw} in the statement of Proposition~\ref{p.sdiff}:
\[
(\mathscr{U}_t,\bbQ_t)\stackrel{\text{law}}{=}(\tilde{\mathscr{U}}_t,\tilde{\bbQ}_t),
\]
where $\mathscr{U}_t,\tilde{\mathscr{U}}_t$ are the corresponding coefficient processes, and $\bbQ_t,\tilde{\bbQ}_t$ are the probability measures corresponding to $g$ and $\tilde{g}$. 

The singular diffusions described by the formal SDE \eqref{e.formalsde1}, \eqref{e.formalsde2} and \eqref{e.formalsde3} can all be made sense pathwisely, that is, for each realization of $u$, as  the solutions to the corresponding martingale problems \cite{CC18,DD16}, using the tools of rough path, regularity structures and paracontrolled calculus. The singular Fokker-Planck equations \eqref{e.formalfk1} and \eqref{e.formalfk2} can also be made sense pathwisely. As these are not the focus of the paper, we refrain from going to the details and only refer the readers to the aforementioned papers. Our proof of Proposition~\ref{p.sdiff} is based on \cite{DD16}, which was the first one to give a rigorous meaning of the so-called continuum directed random polymer, introduced in \cite{AKQ14}, as a singular diffusion.

\begin{proof}[Proof of Proposition~\ref{p.sdiff}]
Fix $t>0$, recall that $\bbQ_t^\eps, \tilde{\bbQ}_t^\eps$ are the quenched probability measures on $C([0,t],\bT)$ of 
\[
\begin{aligned}
&d\cY_s^\eps=-u^\eps(s,\cY_s^\eps)ds+dB_s, \quad\quad \cY_0^\eps \sim m_\bT,\\
&d\tilde{\cY}_s^\eps=u^\eps(t-s,\tilde{\cY}_s^\eps)ds+dB_s, \quad\quad \tilde{\cY}_0^\eps\sim m_\bT.
\end{aligned}
\]
For any $x\in\bT$, we denote $\bbQ_{t,x}^\eps,\tilde{\bbQ}_{t,x}^\eps$ as the corresponding measures when the starting point is $\cY_0^\eps=\tilde{\cY}_0^\eps=x$. Applying \cite[Theorem 31]{DD16}, we know that, for almost every realization $\zeta$, $\tilde{\bbQ}_{t,x}^\eps$ converges weakly\footnote{The result in \cite[Theorem 31]{DD16} is only for the solution to the KPZ equation with constant initial data (rather than at stationarity), but the same proof applies verbatim to our setting.}, with the limit denoted by $\tilde{\bbQ}_{t,x}$. Furthermore, this can be done for all $x\in\bT$ with the same $\zeta$. As a result, for almost every realization $\zeta$, we have the convergence of $\tilde{\bbQ}_t^\eps=\int_{\bT} \tilde{\bbQ}_{t,x}^\eps dx$. 

To show the convergence of $\bbQ_t^\eps$, one could either follow  the proof in \cite{DD16}, which is essentially a repetition, or follow the paracontrolled approach outlined in \cite{CC18}. Since the proofs are almost the same, we do not do it here.

In the end, one needs to show the identity-in-law:
\[
(\mathscr{U}_t,\bbQ_t)\stackrel{\text{law}}{=}(\tilde{\mathscr{U}}_t,\tilde{\bbQ}_t).
\]
This immediately comes from the pathwise construction: $\bbQ_t$ and $\tilde{\bbQ}_t$ are constructed for almost every realization of $\mathscr{U}_t=\{-u(s,x)\}_{s\in[0,t],x\in\bT}$ and $\tilde{\mathscr{U}}_t=\{u(t-s,x)\}_{s\in[0,t],x\in\bT}$ respectively. In other words, they are deterministic functionals of $\mathscr{U}_t$ and $\tilde{\mathscr{U}}_t$ respectively.  By the fact that $\mathscr{U}_t\stackrel{\text{law}}{=}\tilde{\mathscr{U}}_t$, we complete the proof.
\end{proof}

 \end{document}